\documentclass[review,hidelinks,onefignum,onetabnum]{siamart251216}

\usepackage{mathbbol}
\usepackage{accents}

\usepackage{mathtools}
\newcommand{\mynegspace}{\hspace{-0.12em}}
\newcommand{\lvvvert}{\rvert\mynegspace\rvert\mynegspace\rvert}
\DeclarePairedDelimiter{\vvvert}{\lvvvert}{\lvvvert}
\graphicspath{{Figures/}}

\usepackage{cancel}
\usepackage{color}
\usepackage{fancyhdr}
\usepackage{graphics}
\usepackage{hyperref}
\usepackage{graphicx,epsf}
\usepackage{makeidx}
\usepackage{epsfig}
\usepackage{lscape}
\usepackage{empheq,xcolor}

\usepackage{lipsum}
\usepackage{amsfonts}
\usepackage{graphicx}
\usepackage{epstopdf}
\usepackage{algorithmic}
\ifpdf
  \DeclareGraphicsExtensions{.eps,.pdf,.png,.jpg}
\else
  \DeclareGraphicsExtensions{.eps}
\fi

\newsiamremark{remark}{Remark}
\newsiamremark{hypothesis}{Hypothesis}
\crefname{hypothesis}{Hypothesis}{Hypotheses}
\newsiamthm{claim}{Claim}
\newsiamremark{fact}{Fact}
\crefname{fact}{Fact}{Facts}

\headers{ETD schemes for Multicomponent Membranes}{Wangbo Luo, Zhonghua Qiao, and Yanxiang Zhao}

\title{Exponential Time Differencing Schemes for a Phase-Field Model of Multicomponent Membranes\thanks{
Submitted to the editors DATE. 
\funding{The first and second authors were partially supported by NSFC/RGC Joint Research Scheme grant N\_PolyU5145/24 and Hong Kong Research Grants Council GRF grant 15305624.}
}
}

\author{Wangbo Luo\thanks{Department of Applied Mathematics, The Hong Kong Polytechnic University, Hung Hom, Kowloon, Hong Kong
  (\email{wangbo.luo@polyu.edu.hk}, \email{zhonghua.qiao@polyu.edu.hk}).}
  \and Zhonghua Qiao\footnotemark[2]
\and Yanxiang Zhao\thanks{Corresponding author. Department of Mathematics, George Washington University, Washington, DC, USA
  (\email{yxzhao@gwu.edu}).}
}

\usepackage{amsopn}

\ifpdf
\hypersetup{
  pdftitle={Exponential Time Differencing Schemes for a Phase-Field Model of Multicomponent Membranes},
  pdfauthor={Wangbo Luo, Zhonghua Qiao, and Yanxiang Zhao}
}
\fi

\begin{document}

\maketitle

\begin{abstract}
In this paper, we develop and analyze exponential time differencing (ETD) schemes for a phase-field model of multicomponent membranes proposed in our previous work \cite{luo2025ohta}, in which membrane deformation is governed by a force-balance phase-field equation and protein segregation is described by a membrane-associated Ohta-Kawasaki (OK) dynamics. For a fixed phase-field membrane, we introduce a geometry-adapted operator splitting method based on the localization function, which reformulates the surface OK dynamics into a form suitable for ETD integration. The resulting first- and second-order ETD schemes, combined with finite-difference spatial discretization, are rigorously proved to satisfy a discrete maximum-bound principle and unconditional energy stability. For the coupled system, we construct stabilized ETD schemes in an FFT-based spectral framework, treating stiff linear terms exactly and nonlinear mechanochemical couplings explicitly. A narrow-band implementation further reduces the computational cost by restricting surface calculations to the diffuse membrane region. Numerical experiments confirm the predicted temporal accuracy, maximum-bound preservation, and energy decay for the fixed-membrane OK problem, and demonstrate stable and efficient three-dimensional simulations of protein-driven pattern formation and membrane deformation.
\end{abstract}

\begin{keywords}
Multicomponent membranes, 
Ohta–Kawasaki model, Exponential time differencing schemes, Maximum bound principle, Energy stability
\end{keywords}

\begin{MSCcodes}
65D99, 65L20, 65M70
\end{MSCcodes}

\section{Introduction}\label{Sec:Intro}
Multicomponent membranes, composed of lipid bilayers and associated proteins, play important roles in many cellular processes, such as cellular transport, signaling, adhesion, endocytosis, vesicle trafficking, and cell division \cite{schmid2017physical,yuan2021membrane,tsai2024study}. Experiments on giant unilamellar vesicles (GUVs) show that lipid-protein mixtures can separate into coexisting domains and generate diverse membrane shapes, including buds, tubes, and finger-like protrusions \cite{dietrich2001lipid,baumgart2003imaging,su2024kinetic}. These results indicate a strong coupling between membrane composition and morphology, and motivate computational models that integrate phase separation with membrane mechanics. Over the past decades, phase field methods have become one of the most useful tools for modeling and simulating moving interfaces in biological microstructures. These methods avoid explicit interface tracking and naturally describe large deformations and topological changes. They have been successfully applied to lipid bilayer bending, vesicle shape evolution, and membrane dynamics \cite{du2004phase,du2005modeling,wang2008modelling}, as well as to cell crawling, polarity, and chemotaxis through coupled reaction--diffusion and force-balance systems \cite{shao2010computational,camley2013periodic,camley2014polarity,camley2017crawling,zhang2025phase,zhang2012reaction}. These works show that phase field methods provide an effective approach for studying biological systems with coupled geometry, mechanics, and biochemical regulation.

Recently, we proposed a diffuse-interface mechanochemical model for multicomponent membranes in which membrane-bound proteins reorganize on the surface and drive membrane deformation \cite{luo2025ohta}. The model uses two phase-field variables: $\phi$ describes the vesicle membrane as a diffuse interface separating the interior and exterior regions, while $u$ represents the membrane-associated protein density, distinguishing protein-rich and protein-poor phases. Membrane deformation is governed by a force-balance equation that combines bending, surface tension, area constraint, line tension, and protein-dependent biochemical forces. Protein segregation is described by the Ohta-Kawasaki (OK) type dynamics restricted to the membrane surface. The resulting coupled system is given by \cite{luo2025ohta}:
\begin{align}
    & \mu\frac{\partial\phi}{\partial t} = \lambda_{\mathrm{surf}}\left(\Delta\phi-\frac{1}{\epsilon_{\phi}^2}W'(\phi)\right) - \kappa\left(\Delta - \frac{W''(\phi)}{\epsilon_{\phi}^2}\right)\left(\Delta\phi - \frac{W'(\phi)}{\epsilon_{\phi}^2}\right) \nonumber\\
    & \quad\quad\quad + \lambda_{\mathrm{line}} \left(\epsilon_u \Delta_{\mathrm{S}}u - \frac{1}{\epsilon_u}g(\phi)W'(u)\right)|\nabla\phi| - M_{\mathrm{area}}\left(\int_{\Omega}\phi \, dx - A_0\right)|\nabla\phi| \nonumber \\ 
    & \quad\quad\quad + \alpha (u + u_0)\left(\epsilon_{\phi}\Delta\phi-\frac{1}{\epsilon_{\phi}}W'(\phi)\right)|\nabla\phi|, \label{eqn:model_force} \\
    & \frac{\partial(g(\phi)u)}{\partial t} + \nabla\cdot(g(\phi)u\mathbf{v}) = \epsilon_u \Delta_{\mathrm{S}}u  - \frac{1}{\epsilon_u} g(\phi)W'(u) + \gamma g(\phi)\Delta_{\mathrm{S}}^{-1}\Big(g(\phi)(u-\bar{u})\Big) \nonumber \\  
    &\quad\quad\quad\quad\quad\quad\quad\quad\quad\quad\quad\quad - M g(\phi) \left(\int_{\Omega}g(\phi)\left(u-\bar{u}\right)\, dx \right).
    \label{eqn:model_OK}
\end{align}
Here, $W(\phi)=18(\phi^2-\phi)^2$ is the double-well potential, which enforces $\phi\approx 1$ inside the vesicle and $\phi\approx 0$ outside. In \eqref{eqn:model_force}, the first two terms describe surface tension and bending elasticity, while the remaining terms account for protein-induced line tension, area conservation, and biochemical forcing. The parameters $\lambda_{\mathrm{surf}}$, $\kappa$, $\lambda_{\mathrm{line}}$, $M_{\mathrm{area}}$, and $\alpha$ determine the strengths of these effects, and $\epsilon_{\phi}$ controls the membrane thickness. Equation \eqref{eqn:model_OK} describes membrane-bound protein segregation through the OK-type dynamics. The localization function $g(\phi)=\epsilon_{\phi}^{-1}W(\phi)$ restricts the evolution of $u$ to the diffuse membrane, and the surface diffusion operator is defined by $\Delta_{\mathrm{S}}:= \nabla\cdot(g(\phi)\nabla)$. The parameter $\epsilon_u$ sets the width of the protein interface, while the nonlocal term involving $\Delta_{\mathrm{S}}^{-1}$ encodes long-range interaction with the parameter $\gamma$ and promotes microphase separation. The last term enforces a soft mass constraint around the prescribed average protein density $\bar{u}$. The advection term $\nabla\cdot(g(\phi)u\mathbf{v})$ couples protein transport to the membrane velocity $\mathbf{v}= -\partial_t\phi\frac{\nabla\phi}{|\nabla\phi|^2}$. Together, \eqref{eqn:model_force}--\eqref{eqn:model_OK} couple curvature elasticity, protein phase separation, and biochemical forcing in a diffuse-interface system for multicomponent membranes.

Many phase-field models in cell biology are based on force balance, coupling membrane mechanics, cytoskeletal forces, adhesion, and biochemical regulation into PDE systems for cell shape, polarity, and migration
\cite{luo2025ohta,shao2010computational,camley2013periodic,camley2014polarity,camley2017crawling,zhang2025phase,zhang2012reaction,ziebert2012model,zimmermann2010leading}. These models can reproduce experimental behaviors through nonlinear mechanochemical coupling, but their numerical studies remain limited to forward Euler or first-order semi-implicit time discretizations.
These approaches usually require small time steps for stability, lose accuracy over long time intervals, and become prohibitively expensive for biologically relevant simulations and parameter studies. In addition, rigorous analysis for convergence, stability, and energy dissipation law is still limited for many of these schemes. Thus, there is a clear need for stable, accurate, and efficient methods for current mechanochemical models.

Exponential time differencing (ETD) methods are well-suited for stiff differential equations by treating the linear part exactly in time and handling the nonlinear terms explicitly \cite{HoOs10}. In recent years, they have been systematically developed and analyzed for a range of phase-field models
\cite{wang2016efficient,du2019maximum,du2021maximum,wang2025maximum,ju2018energy}, with discrete maximum bound principle (MBP), energy stability, and related error estimates. They can be reduced to multiplying matrix functions by vectors, which can be implemented efficiently using fast Fourier transforms (FFT) \cite{wang2016efficient,du2019maximum} or other fast algorithms for structured matrices \cite{ju2015fast}. These features make ETD methods suitable for large-scale simulations of coupled phase-field models in cell biology and materials science.

A key component of \eqref{eqn:model_force}--\eqref{eqn:model_OK} is the membrane-associated OK model for protein phase separation. The OK model, first introduced in \cite{ohta1986equilibrium}, has been widely used to describe microphase separation in diblock copolymer melts \cite{hamley2004developments} and thus is well-suited to simulate the phase separation of the membrane-associated proteins. For OK-type energies on regular domains, many analytical results have been established on properties of global minimizers \cite{Choksi2012,Ren_Shoup2020,LuoZhao_PhysicaD2024}. Numerically, Allen-Cahn-type OK dynamics have been studied with backward differentiation formula (BDF) methods \cite{Xu_Zhao2019,Xu_Zhao2020,LuoZhao_NumPDE2024,LuoZhao_AAMM2024,luo2026double} and ETD methods \cite{wang2025maximum}, with rigorous analysis of MBP and energy stability. Cahn-Hilliard-type OK dynamics have also been explored using implicit midpoint spectral schemes \cite{Benesova_SINA2014}, IEQ methods \cite{ChengYangShen_JCP2017}, and stabilized semi-implicit schemes \cite{XuTang_SJNA2006}. However, there is still a lack of studies of OK dynamics on complex and deformable geometries, especially surfaces represented by phase-field functions. By incorporating a localization function $g(\phi)$, the membrane-associated OK energy is given by \cite{luo2025ohta}:
\begin{align}
    E_{\phi}[u] :=&
    \int_{\Omega} g(\phi)
    \left[
    \frac{\epsilon_u}{2}|\nabla u|^2
    +\frac{1}{\epsilon_u}W(u)
    \right]\,\mathrm{d}\mathbf{x}
    \nonumber\\
    &+\frac{\gamma}{2}
    \int_{\Omega}
    \left[
    (-\Delta_{\mathrm S})^{-1/2}
    \big(g(\phi)(f(u)-\bar u)\big)
    \right]^2
    \,\mathrm{d}\mathbf{x}
    +\frac{M}{2}
    \left[
    \int_{\Omega} g(\phi)(f(u)-\bar u)\,\mathrm{d}\mathbf{x}
    \right]^2 ,
    \label{eqn:mem_OK_energy}
\end{align}
where $\Omega=\prod_{i=1}^{d}[-X_i,X_i]\subset\mathbb{R}^d$, $d=2,3$. The new term $f(u) = 3u^2 - 2u^3$ is a smooth indicator of the protein-rich phase \cite{Xu_Zhao2020}. The evolution equation \eqref{eqn:model_OK} can then be viewed as the $L^2$-gradient flow of \eqref{eqn:mem_OK_energy}, with an additional advection term induced by membrane motion.

The first part of this work studies the membrane-associated OK dynamics on a fixed diffuse membrane. We prescribe the phase-field function $\phi$, so that the localization function $g(\phi)$ is time independent, the membrane velocity satisfies $\mathbf v=0$, and the advection term is omitted. The protein variable $u$ then evolves according to the $L^2$-gradient flow of $E_\phi[u]$:
\begin{align}\label{eqn:pACOK_mem}
    \frac{\partial(g(\phi)u)}{\partial t}
    =
    -\frac{\delta E_{\phi}[u]}{\delta u}
    ={}& \epsilon_u \Delta_{\mathrm{S}}u
    - \gamma g(\phi)f'(u)(-\Delta_{\mathrm{S}})^{-1}
    \Big(g(\phi)(f(u)-\bar{u})\Big) \nonumber \\
    &-\frac{1}{\epsilon_u}g(\phi)W'(u)
    -M g(\phi)f'(u)
    \left(\int_{\Omega}g(\phi)(f(u)-\bar{u})\,\mathrm{d}x\right).
\end{align}
Here, $f(u)=3u^2-2u^3$ satisfies $f(0)=0$, $f(1)=1$, and $f'(0)=f'(1)=0$, which are key properties for proving the discrete MBP for BDF and ETD methods \cite{Xu_Zhao2020,wang2025maximum}. Since $g(\phi)>0$ is fixed, the gradient-flow structure gives the continuous energy dissipation law as
\begin{align*}
    \frac{\mathrm{d}}{\mathrm{d}t}E_{\phi}[u]
    =
    \left(\frac{\delta E_{\phi}[u]}{\delta u},\frac{\partial u}{\partial t}\right)_{\Omega}
    =
    -\left(\frac{\delta E_{\phi}[u]}{\delta u},
    g(\phi)^{-1}\frac{\delta E_{\phi}[u]}{\delta u}\right)_{\Omega}
    \leq 0 .
\end{align*}
For this fixed-geometry problem, we construct first- (ETD1) and second-order ETD Runge-Kutta (ETDRK2) schemes, combined with second-order finite differences and a narrow-band implementation near the diffuse membrane. We prove a discrete MBP and energy stability for both schemes. To our knowledge, these are the first proposed and rigorously analyzed ETD schemes for a membrane-associated OK model posed on a phase-field-defined curved geometry.

The second part of this work develops ETD schemes for the coupled system \eqref{eqn:model_force}-\eqref{eqn:model_OK}, where both the membrane shape and the membrane-bound protein distribution evolve in time. In these schemes, the linear parts of the force-balance phase-field equation \eqref{eqn:model_force} and the membrane-associated OK equation \eqref{eqn:model_OK} are treated exactly, combined with the spectral collocation approximation and the operator-splitting stabilization \cite{wang2016efficient} for the force-balance equation, together with another stabilization strategy for the membrane-associated OK dynamics. The nonlinear terms are handled explicitly and are discretized in space with methods tailored to their structure: spectral collocation for the force-balance phase-field equation, and finite difference methods (with narrow-band restriction) for the membrane-associated OK equation. This mixed spatial treatment keeps the linear solves simple and fast in Fourier space while preserving flexibility for the surface differential equation. Although a full convergence and energy analysis of the coupled ETD schemes is still open, extensive numerical experiments show stable, accurate, and efficient long-time simulations of multicomponent membrane dynamics.

The main contributions of this paper are summarized as follows:
\begin{enumerate}
\item We formulate the membrane-associated OK dynamics on a fixed phase-field-defined membrane as an $L^2$-gradient flow and derive its energy dissipation law. Based on a new operator splitting method adapted to the diffuse-surface geometry, we construct ETD1 and ETDRK2 schemes and prove their discrete MBP and energy stability.
\item We develop ETD1 and ETDRK2 schemes for the mechanochemical phase-field system. The proposed algorithms combine spectral collocation and finite differences in a unified framework for the coupled system, together with suitable stabilization techniques and a narrow-band strategy for efficient long-time simulations.
 \item The proposed ETD framework provides an efficient numerical approach for phase-field systems on evolving membranes and can be extended to related force-balance models for cell shape, polarity, migration, and chemotaxis. 
\end{enumerate}

\section{Exponential Time Differencing Schemes for Membrane–associated OK Model on Fixed Membranes}\label{Sec:ETDforMOK}
In this section, we present the fully discrete ETD1 and ETDRK2 schemes for the membrane-associated OK model on a fixed phase-field cell $\phi$, and then prove their discrete  
maximum bound principle (MBP) and energy stability for the membrane-based OK energy functional \eqref{eqn:mem_OK_energy}.

Since the cell $\phi$ is fixed, the localization function $g(\phi)$ is also fixed and independent of time, and the advection term in \eqref{eqn:model_OK} 
can be neglected. Thus, the corresponding equation degenerates into
\begin{align}
         \frac{\partial(g(\phi)u)}{\partial t} =  \ & \epsilon_{u} \Delta_{\mathrm{S}}u  - \frac{1}{\epsilon_{u}} g(\phi)W'(u) - \gamma g(\phi)f'(u)(-\Delta_{\mathrm{S}})^{-1}\Big(g(\phi)(f(u)-\bar{u})\Big) \nonumber \\  
         & - M g(\phi)f'(u) \left(\int_{\Omega}g(\phi)\left(f(u)-\bar{u}\right)\text{d}x \right). \label{eqn:model_OK_fix}
\end{align}

\subsection{Second-order Finite Difference Scheme for Spatial Discretization}
In this section, we discretize the spatial operators by using the finite difference method. Consider a square domain $\Omega = [-X, X)\times[-Y, Y)\subset\mathbb{R}^2$, and a periodic boundary condition is imposed for the problem.  For the three-dimensional case, the notations can be defined similarly. Let $N_x$ and $N_y$ be two positive even integers, $h_x = \frac{2X}{N_x}$ and $h_y = \frac{2Y}{N_y}$, and discretize the spatial domain $\Omega$ by a uniform mesh as $\Omega_h = \Omega\cap (h_x\mathbb{Z}\times h_y\mathbb{Z})$. We then define the index set 
\begin{align}
    S_{h} = \{ (i,j)\in\mathbb{Z}^2;\ i = 1:N_x,\ j = 1:N_y\}.
\end{align}
Denote by $\mathcal{M}_{h}$ the collection of periodic grid functions defined on $\Omega_{h}$:
\begin{align*}
    \mathcal{M}_{h} = \{f:\Omega_{h}\to \mathbb{R} | \ f_{i+m_1N_x,j+m_2N_y} = f_{ij}, \ \forall\ (i,j)\in S_{h}, \ \forall\ (m_1,m_2) \in\mathbb{Z}^2\}.
\end{align*}
and $\accentset{\circ}{\mathcal{M}}_{h} := \{f\in\mathcal{M}_{h}|\left\langle f,1\right\rangle_{h} =0 \}$ the collection of functions in ${\mathcal{M}}_{h}$ with zero mean. For any $f,g\in \mathcal{M}_{h}$ and $\mathbf{f} = (f^1,f^2)^T, \mathbf{g} = (g^1, g^2)^T \in( \mathcal{M}_{h})^2$, we define the discrete $L^2$ inner product $\left\langle\cdot,\cdot\right\rangle_{h}$, discrete $L^2$-norm $\|\cdot\|_{L^2,h}$, and discrete $L^{\infty}$-norm $\|\cdot\|_{L^{\infty},h}$ as follows:
\begin{align*}
   & \langle f,g \rangle_{h} = h_xh_y\sum_{(i,j)\in S_h}f_{ij}g_{ij}, \ \ \|f\|_{L^2,h} = \sqrt{\langle f,f \rangle_{h}}, \ \ \|f\|_{L^{\infty},h} = \max_{(i,j)\in S_h}|f_{ij}|, \\
   &  \langle \mathbf{f},\mathbf{g} \rangle_{h} =  h_xh_y\sum_{(i,j)\in S_h}\left(f^1_{ij}g^1_{ij}+f^2_{ij}g^2_{ij}\right), \ \ ||\mathbf{f}||_{L^2,h} = \sqrt{\langle \mathbf{f},\mathbf{f} \rangle_{h}}.
\end{align*}
We further define the discrete operator $\Delta_{\mathrm{S},h}$ as 
\begin{align}
   &  (\Delta_{\mathrm{S},h}u)_{ij} = D_x^{-}\left(g_{i+\frac{1}{2},j}D_x^{+}u_{ij}\right) + D_y^{-}\left(g_{i,j+\frac{1}{2}}D_y^{+}u_{ij}\right), \label{operator:diff} 
\end{align}
where 
\begin{align*}
    & D_x^{+}u_{ij} = \frac{u_{i+1,j}-u_{ij}}{h_x}, \quad D_y^{+}u_{ij} = \frac{u_{i,j+1}-u_{ij}}{h_y}, \\
    & D_x^{-}u_{ij} = \frac{u_{ij}-u_{i-1,j}}{h_x}, \quad D_y^{-}u_{ij} = \frac{u_{ij}-u_{i,j-1}}{h_y}, \\
    & g_{i+\frac{1}{2},j} = \frac{g_{i+1,j} + g_{ij}}{2}, \quad g_{i,j+\frac{1}{2}} = \frac{g_{i,j + 1} + g_{ij}}{2}.
\end{align*}
and periodic boundary conditions apply when $i\notin\{1,\cdots,N_x\}$ and $j\notin\{1,\cdots,N_y\}$.

Furthermore, to analyze the numerical schemes, we take the fixed phase-field cell $\phi$ that satisfies $0<\phi<1$, which leads to the positivity of the localization function $0<g_{\mathrm{min}}\leq g(\phi)\leq g_{\mathrm{max}}$, and is required in the following proofs.

For any $g>0$, it is obvious that $-\Delta_{\mathrm{S},h}$ is symmetric and positive semidefinite on $\accentset{\circ}{\mathcal{M}}_{h}$. Then it's safe to define its inverse $(-\Delta_{\mathrm{S},h})^{-1}:\accentset{\circ}{\mathcal{M}}_{h}\to\accentset{\circ}{\mathcal{M}}_{h}$ by
\begin{align}
    (-\Delta_{\mathrm{S},h})^{-1}f = u \quad \mathrm{if\ and\ only\ if} \quad -\Delta_{\mathrm{S},h}u = f. \nonumber
\end{align}
The definition of $(-\Delta_{\mathrm{S},h})^{-1}$ can be extended to the functions with nonzero mean by removing the mean value, namely, $(-\Delta_{\mathrm{S},h})^{-1}f = (-\Delta_{\mathrm{S},h})^{-1}\left(f-\frac{1}{\Omega_{h}}\langle f,1 \rangle_{h}\right)$ for any $f\in\mathcal{M}_{h}$. Then, to prove the discrete MBP and energy stability, we denote the $L^{\infty}$-bound $\|(-\Delta_{\mathrm{S},h})^{-1}\|_{L^{\infty},h}$ as the optimal constant $C$ such that $\|(-\Delta_{\mathrm{S},h})^{-1}f\|_{L^{\infty},h} \leq C\|f\|_{L^{\infty},h}$. In the following lemmas, we prove the $L^{\infty}$-bound of the discrete inverse operator $(-\Delta_{\mathrm{S},h})^{-1}$.

\begin{lemma}[$L^{\infty}$-bound of $(-\Delta_{\mathrm{S},h})^{-1}$]\label{lemma:Linfbound}
    For any functions $u,f \in \accentset{\circ}{\mathcal{M}}_{h}$ such that $-\Delta_{\mathrm{S},h}u = f$, the $L^{\infty}$-bound of the discrete inverse operator is given by 
    \begin{align}
        \|(-\Delta_{\mathrm{S},h})^{-1}\|_{L^{\infty},h}\leq C, \nonumber
    \end{align}
    where $C$ is a generic constant independent of $h_x$, $h_y$. 
\end{lemma}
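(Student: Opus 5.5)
We prove an $L^\infty$ bound for the discrete inverse of $-\Delta_{\mathrm{S},h}$ that is uniform in $h_x,h_y$. We do this by combining a discrete energy estimate with a discrete Sobolev-type embedding, using only the uniform ellipticity $0<g_{\min}\le g\le g_{\max}$. Let $u,f\in\accentset{\circ}{\mathcal{M}}_h$ with $-\Delta_{\mathrm{S},h}u=f$.

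\textbf{Step 1: energy estimate.} Take the discrete inner product of the equation with $u$ and apply summation by parts, which holds under periodicity. This gives
\begin{align*}
\langle g_{i+\frac12,j}D_x^+u,D_x^+u\rangle_h+\langle g_{i,j+\frac12}D_y^+u,D_y^+u\rangle_h=\langle f,u\rangle_h .
\end{align*}
The averaged coefficients satisfy $g_{i+\frac12,j}\ge g_{\min}$, so the left side is at least $g_{\min}\|\nabla_h u\|_{L^2,h}^2$. Since $u$ has zero mean, the discrete Poincaré inequality $\|u\|_{L^2,h}\le C_P\|\nabla_h u\|_{L^2,h}$ holds, with $C_P$ depending only on $X,Y$. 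It follows that
\begin{align*}
\|\nabla_h u\|_{L^2,h}\le \frac{C_P}{g_{\min}}\|f\|_{L^2,h}\le \frac{C_P|\Omega|^{1/2}}{g_{\min}}\|f\|_{L^\infty,h},
\end{align*}
and hence the same kind of bound for $\|u\|_{L^2,h}$.

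\textbf{Step 2: passing to $L^\infty$.} In two dimensions $H^1$ does not embed into $L^\infty$. The discrete embedding $\|u\|_{L^\infty,h}\lesssim\|u\|_{H^1_h}$ therefore fails, since it carries a factor $|\log h|^{1/2}$. We need one more derivative of control, so we aim for an $H^2_h$ estimate and then use the discrete embedding $\|u\|_{L^\infty,h}\le C\|u\|_{H^2_h}$, valid in $d=2,3$ for periodic zero-mean grid functions. That embedding follows from the Fourier representation and summability of $(1+|k|^2)^{-2}$ in $d\le3$.

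For the $H^2_h$ estimate, expand $\Delta_{\mathrm{S},h}u$ with the discrete product rule:
\begin{align*}
\Delta_{\mathrm{S},h}u=\bar g\,\Delta_h u+(\text{terms of the form }D^-g\cdot \mathcal{A}D^+u),
\end{align*}
where $\bar g$ and $\mathcal{A}$ denote local averages. The fixed $\phi$ is smooth, so $g$ and its difference quotients are bounded independently of $h$. This gives
\begin{align*}
g_{\min}\|\Delta_h u\|_{L^2,h}\le \|f\|_{L^2,h}+C\|\nabla_h u\|_{L^2,h}.
\end{align*}
Because the periodic discrete Laplacian is diagonal in discrete Fourier modes, $\|D^2_h u\|_{L^2,h}\le C\|\Delta_h u\|_{L^2,h}$. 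Combining this with Step 1 yields $\|u\|_{H^2_h}\le C\|f\|_{L^\infty,h}$, and the embedding then gives the claim. The resulting constant depends on $g_{\min}$, $\|g\|_{W^{1,\infty}}$ and $\Omega$, but not on $h_x,h_y$.

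\textbf{Main obstacle.} The delicate point is Step 2: the variable-coefficient $H^2_h$ regularity estimate together with the discrete $H^2\to L^\infty$ embedding. The lemma leaves the smoothness of $g$ implicit, but this route needs a bound on $D^\pm g$ uniform in $h$, which requires $\phi$ to be smooth (for instance, a sampled smooth field).

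If that assumption has to be avoided, a fallback is a discrete maximum-principle or Green's-function argument. Write $-\Delta_{\mathrm{S},h}$ as an M-matrix on the torus and split $f$ into positive and negative parts. Then compare $u$ with a discrete barrier built from the constant-coefficient Green's function, using $g_{\min}\le g\le g_{\max}$ in a De Giorgi/Nash-type discrete estimate. This is considerably more technical, so the smooth-$g$ Fourier route is the plan to try first.
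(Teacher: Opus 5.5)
Your proof is correct, but it takes a genuinely different route from the paper.

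\textbf{The paper's route.} The paper never passes through Sobolev norms. It introduces the discrete Green's function $G_h(x_{ij},\cdot)\in\accentset{\circ}{\mathcal{M}}_h$ solving $-\Delta_{\mathrm{S},h}G_h(x_{ij},\cdot)=\delta_{x_{ij},h}-|\Omega|^{-1}$ and writes $u_{ij}=\langle G_h(x_{ij},\cdot),f\rangle_h$. It then imports from the literature the uniform bound $\sup_{ij}\sum_{pq}|G_h(x_{ij},x_{pq})|h_xh_y\le C$ for uniformly elliptic divergence-form difference operators, with $C$ depending only on $\Omega$, $g_{\min}$, $g_{\max}$. From there the $L^\infty$ estimate takes one line. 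This is essentially your fallback, except that the paper cites the Green's function bound instead of deriving it.

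\textbf{Your route.} Your energy/regularity/embedding argument is self-contained, and every step checks. Summation by parts and the periodic discrete Poincar\'e inequality give Step 1. The Fourier bound $\|u\|_{L^\infty,h}\le C(\|u\|_{L^2,h}+\|\Delta_h u\|_{L^2,h})$ for $d\le 3$ then finishes the proof. That bound is all you actually need, so the $D_h^2$ estimate can be dropped. You even get the stronger bound $\|u\|_{L^\infty,h}\le C\|f\|_{L^2,h}$.

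One point should be stated precisely. The product rule puts different coefficients, $g_{i+\frac12,j}$ and $g_{i,j+\frac12}$, on the two second differences, not a common $\bar g$. Replacing both by $g_{ij}$ costs terms such as $\tfrac12 D_x^+g_{ij}\,(D_x^+u_{ij}-D_x^+u_{i-1,j})$. These are again bounded by $\max|D^\pm g|\,\|\nabla_h u\|_{L^2,h}$, so your inequality $g_{\min}\|\Delta_h u\|_{L^2,h}\le\|f\|_{L^2,h}+C\|\nabla_h u\|_{L^2,h}$ survives.

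\textbf{The trade-off.} Your constant depends on $\max|D^\pm g|$, so you need $g$ to be Lipschitz uniformly across grids. This holds for the fixed smooth $\tanh$ cells used in the paper, and you flag it honestly. The paper's route instead gives the dependence on the ellipticity bounds alone, as it states. The price is that all the real work is delegated to the cited discrete Green's function estimates. For merely bounded coefficients, that work amounts to discrete De Giorgi--Nash-type theory.
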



\begin{proof}
By the construction of $-\Delta_{\mathrm{S},h}$ in \eqref{operator:diff} and the
uniform bound $0<g_{\mathrm{min}}\leq g(\phi)\leq g_{\mathrm{max}}$, $-\Delta_{\mathrm{S},h}$ is a uniformly elliptic finite difference operator in divergence form and is symmetric positive definite on $\accentset{\circ}{\mathcal M}_h$. 

Consider the discrete Green's function for $-\Delta_{\mathrm{S},h}$ on $\accentset{\circ}{\mathcal M}_h$. For each grid point $x_{ij}\in\Omega_h$, let
$G_h(x_{ij},\cdot)\in \accentset{\circ}{\mathcal M}_h$ solve
\begin{align*}
        -\Delta_{\mathrm{S},h} G_h(x_{ij},\cdot)
    =
    \delta_{x_{ij},h}
    -
    \frac{1}{|\Omega|}\mathbb 1,
\end{align*}
where $ \delta_{x_{ij},h} $ is the discrete delta function
\begin{align*}
    \delta_{x_{ij},h}(x_{pq}) = 
    \begin{cases}
        \frac{1}{h_x h_y}, \quad (p,q)=(i,j), \\
        0, \quad \mathrm{otherwise}
    \end{cases}
\end{align*}
Then, the Green's function representation formula gives
\begin{align*}
        u_{ij}
    =
    \left\langle G_h(x_{ij},\cdot),f\right\rangle_h
    =
    \sum_{x_{pq}\in\Omega_h}
    G_h(x_{ij},x_{pq}) f_{pq}\,h_xh_y .
\end{align*}
By the discrete Green's function estimate for uniformly elliptic finite-difference operators in divergence form on a periodic grid,
\cite{BrambleHubbardThomee1969,BrambleThomee1969,BealeLayton2006}
\[
    \sup_{x_{ij}\in\Omega_h}
    \sum_{x_{pq}\in\Omega_h}
    \left|G_h(x_{ij},x_{pq})\right|h_xh_y
    \leq C,
\]
where $C$ depends on $\Omega, g_{\mathrm{max}}$ and $g_{\mathrm{min}}$ due to the uniform ellipticity bounds of $-\Delta_{\mathrm{S},h}$, but not on $h_x$ and $h_y$. Therefore, for each $x_{ij}\in\Omega_h$,
\begin{align*}
    |u_{ij}|
    \leq
    \sum_{x_{pq}\in\Omega_h}
    \left|G_h(x_{ij},x_{pq})\right|
    |f_{pq}|\,h_xh_y .
\end{align*}
Taking the maximum over $(i,j)$ gives
\begin{align*}
    \|u\|_{L^\infty,h} = \|(-\Delta_{\mathrm{S},h})^{-1}f\|_{L^{\infty},h}
    \leq
    C\|f\|_{L^\infty,h}.
\end{align*}
\end{proof}

\subsection{ETD1 and ETDRK2 Schemes with Maximum Bound Principle and Energy Stability}

Instead of working on the original equation \eqref{eqn:model_OK_fix} for the discrete MBP and energy stability, we construct a new operator splitting method by dividing $g^{1/2}$ on both sides of the equation, which leads to 
\begin{align}
         \frac{\partial(g^{1/2}u)}{\partial t} =  \ & \epsilon_{u} \left(g^{-1/2}\Delta_{\mathrm{S}}g^{-1/2}\right)g^{1/2}u  - \gamma g^{1/2}f'(u)(-\Delta_{\mathrm{S}})
         ^{-1}\Big(g(\phi)(f(u)-\bar{u})\Big) \nonumber \\  
         & - \frac{1}{\epsilon_{u}} g^{1/2}W'(u) - M g^{1/2}f'(u) \left(\int_{\Omega}g(\phi)\left(f(u)-\bar{u}\right) \text{d}x \right). \label{eqn:model_OK_half}
\end{align}
Introducing the linear operator and the nonlinear remainder as 
\begin{align*}
     \mathcal{L}_{\phi} = \ & B - \epsilon_{u} g^{-1/2}\Delta_{\mathrm{S}}g^{-1/2} , \mathrm{\ namely,\ } \mathcal{L}_{\phi}w = B w - \epsilon_{u} g^{-1/2}\Delta_{\mathrm{S}}(g^{-1/2}w), \\
     \mathcal{R}_{\phi}(u) = \ &  Bg^{1/2}u  - \frac{1}{\epsilon_{u}}g^{1/2}W'(u) - \gamma g^{1/2}(-\Delta_{\mathrm{S}})^{-1}\left(g(\phi)f(u)\right)f'(u) \\
     & - Mg^{1/2}\int_{\Omega}g(\phi)(f(u)-\bar{u})\ \mathrm{d}xf'(u),
\end{align*}
where $\mathcal{L}_{\phi}$ and $\mathcal{R}_{\phi}$ depend on the phase-field cell $\phi$ through $g = 18(\phi-\phi^2)^2$, and $B$ is a stabilization constant. The equation \eqref{eqn:model_OK_half} can be reformulated as
\begin{align}
\frac{\partial (\sqrt{g}u)}{\partial t} + \mathcal{L}_{\phi}(\sqrt{g}u) = \mathcal{R}_{\phi}(u), \label{eqn:OK_FD_operatorsplit}
\end{align}
and the corresponding spatial-discrete scheme is to find a function $u:[0,\infty)\to\mathcal{M}_{h}$ with a fixed $g = (g_{ij})> 0 \in \mathcal{M}_{h}$ such that 
\begin{align}
    & \frac{\partial (\sqrt{g}\odot u)}{\partial t} + \mathbf{L}_{\phi}(\sqrt{g}\odot u) = \mathbf{R}_\phi(u), \label{eqn:ODE_eqn} \\
    & u(0) = u_0, \label{eqn:ODE_ini}
\end{align}
where $\odot$ is entry-wise multiplication, $u_0$ is the given initial data, and 
\begin{align}
     &\mathbf{L}_{\phi} = \ BI - \epsilon_{u} g^{-\frac{1}{2}}\odot\Delta_{\mathrm{S},h}(g^{-\frac{1}{2}}\odot), \nonumber\\
     &\mathbf{R}_\phi(u) =\ B \sqrt{g} \odot u - \epsilon_{u}^{-1} \sqrt{g}\odot W'(u) - \gamma \sqrt{g}\odot(-\Delta_{\mathrm{S},h})^{-1}\Big(g\odot f(u))\Big)\odot f'(u)\nonumber\\
     &\qquad\qquad\  - M \Big\langle g\odot (f(u)-\bar{u}),\mathbb{1}\Big\rangle_h \sqrt{g}\odot f'(u). \nonumber
\end{align}
Note that the discrete operator $\Delta_{\mathrm{S},h}$ is symmetric, negative semi-definite, and $g > 0$; thus, $\mathbf{L}_{\phi}$ is also symmetric and positive definite for any $B>0$. 

\begin{remark}\label{remark:idtf}
In fact, we can identify the grid function in $\mathcal M_h$ with its coefficient vector in $\mathbb R^{N_xN_y}$ by arranging the grid values in lexicographical order. Under this identification, every discrete linear operator on $\mathcal M_h$ is represented uniquely by a matrix in $\mathbb R^{N_xN_y\times N_xN_y}$. In particular, the discrete operator $\Delta_{\mathrm{S},h}$ is equivalent to a stiffness matrix, which we still denote by $\Delta_{\mathrm{S},h}$ for notational simplicity. Likewise, we take
\begin{align*}
    G=\mathrm{diag}(g_{ij})\in\mathbb R^{N_xN_y\times N_xN_y},
\end{align*}
and $I_{N_xN_y}$ denotes the identity matrix in $\mathbb R^{N_xN_y\times N_xN_y}$, then the discrete operator $\mathbf L_{\phi}$ can be represented by
\[
\mathbf L_{\phi}=BI_{N_xN_y}-\epsilon_u G^{-1/2}\Delta_{\mathrm{S},h}G^{-1/2}.
\]
Therefore, the action of the discrete operator on a grid function is exactly the matrix-vector multiplication of its coefficient vector, and we do not need to distinguish the linear operators and corresponding matrices in the following analysis. 
\end{remark}

Next, we give the positivity property of the matrix exponential
generated by $\mathbf L_{\phi}$. This property follows from the
standard theory of Metzler matrices; see \cite{farina2011positive}.

\begin{lemma}\label{lemma:LuPD}
    Given the operator $\mathbf{L}_{\phi}= BI - \epsilon_{u} g^{-\frac{1}{2}}\odot\Delta_{\mathrm{S},h}(g^{-\frac{1}{2}}\odot)$, the corresponding matrix $-\mathbf{L}_{\phi} = -BI_{N_xN_y}+\epsilon_u G^{-1/2}\Delta_{\mathrm{S},h}G^{-1/2}$ is a Metzler matrix, and all entries of the matrix exponential $e^{-\tau \mathbf{L}_{\phi}}$ are nonnegative for every $\tau>0$.
\end{lemma}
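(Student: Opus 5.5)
The argument has two parts. First we check the sign pattern of the off-diagonal entries of $-\mathbf L_{\phi}$. Then we use the standard shift argument for Metzler matrices.

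\emph{Step 1: the entries of $\Delta_{\mathrm{S},h}$.} Expanding \eqref{operator:diff} at the grid point $(i,j)$ gives
\begin{align*}
(\Delta_{\mathrm{S},h}u)_{ij} = \frac{g_{i+\frac12,j}(u_{i+1,j}-u_{ij}) - g_{i-\frac12,j}(u_{ij}-u_{i-1,j})}{h_x^2} + \frac{g_{i,j+\frac12}(u_{i,j+1}-u_{ij}) - g_{i,j-\frac12}(u_{ij}-u_{i,j-1})}{h_y^2}.
\end{align*}
Hence the only nonzero off-diagonal entries of the matrix $\Delta_{\mathrm{S},h}$ are the couplings to the four neighbours $(i\pm1,j)$ and $(i,j\pm1)$, with periodic wrap-around. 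These entries are $g_{i\pm\frac12,j}/h_x^2$ and $g_{i,j\pm\frac12}/h_y^2$. Each is a positive number, since it is an average of positive values of $g$.

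The diagonal entry equals $-(g_{i+\frac12,j}+g_{i-\frac12,j})/h_x^2-(g_{i,j+\frac12}+g_{i,j-\frac12})/h_y^2$, which is negative. In the degenerate case $N_x=2$ or $N_y=2$ the two periodic neighbours coincide, and their entries simply add. The sum is still positive, so nothing changes.

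\emph{Step 2: conjugation preserves the sign pattern.} Since $G^{-1/2}$ is diagonal with positive entries, we have
\[
(G^{-1/2}\Delta_{\mathrm{S},h}G^{-1/2})_{kl}=g_k^{-1/2}(\Delta_{\mathrm{S},h})_{kl}\,g_l^{-1/2}.
\]
This has the same sign as $(\Delta_{\mathrm{S},h})_{kl}$, so every off-diagonal entry is nonnegative. The term $-BI$ changes only the diagonal. Therefore every off-diagonal entry of $-\mathbf L_{\phi}$ is nonnegative, and $-\mathbf L_{\phi}$ is a Metzler matrix.

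\emph{Step 3: nonnegativity of the exponential.} Choose
\[
s \ge \max_k\bigl|(-\mathbf L_{\phi})_{kk}\bigr| = \max_k\Bigl(B+\epsilon_u g_k^{-1}\bigl|(\Delta_{\mathrm{S},h})_{kk}\bigr|\Bigr).
\]
Then $P:=-\mathbf L_{\phi}+sI$ is entrywise nonnegative. Because $sI$ commutes with $P$,
\[
e^{-\tau\mathbf L_{\phi}}=e^{-s\tau}e^{\tau P}=e^{-s\tau}\sum_{n\ge0}\frac{\tau^nP^n}{n!}.
\]
Every power $P^n$ is entrywise nonnegative. The series converges absolutely, so the limit is entrywise nonnegative, and multiplying by the positive scalar $e^{-s\tau}$ preserves this for every $\tau>0$. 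This is the classical result cited from \cite{farina2011positive}.

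No step is a real obstacle. The only point needing care is Step 1, namely reading the signs of the entries correctly from the flux form and handling the periodic indexing; everything after that is routine.
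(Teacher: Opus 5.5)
Your proof is correct and takes essentially the paper's approach. The paper gives no separate proof; it only states that the result follows from the standard theory of Metzler matrices. Your Steps 1--3 write that standard argument out in full: you read off the nonnegative off-diagonal signs from the flux form of $\Delta_{\mathrm{S},h}$, note that conjugation by the positive diagonal matrix $G^{-1/2}$ preserves them, and use the shift $e^{-\tau\mathbf L_{\phi}}=e^{-s\tau}e^{\tau P}$ with $P\ge 0$ entrywise. The only hypothesis you rely on, $g>0$ at every grid point, is the one the paper imposes through $0<\phi<1$.
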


Also, we list some properties of matrix functions useful to the analysis later; one can also see from \cite{du2019maximum,wang2025maximum}.

\begin{lemma}[\cite{higham2008functions}]\label{lemma:maxtix}
    Let $\varphi$ be defined on the spectrum of $A\in \mathbb{C}^{m\times m}$, that is, the values $\varphi(\lambda_{i})$, $1\leq i\leq m$ exist, with $\{\lambda_i\}_{i=1}^m$ being the eigenvalues of $A$, then, 
    \begin{enumerate}
        \item $\varphi(A)$ commutes with $A$, and $\varphi(A^T) = \varphi(A)^T$;
        \item the eigenvalues of $\varphi(A)$ are $\{\varphi(\lambda_i)\}_{i=1}^m$;
        \item $\varphi(P^{-1}AP) = P^{-1}\varphi(A)P$ for any nonsingular matrix $P\in\mathbb{C}^{m\times m}$;
        \item $\frac{d}{ds}e^{sA} = Ae^{sA} = e^{sA}A$ for any $s \in \mathbb{R}$.
    \end{enumerate}
\end{lemma}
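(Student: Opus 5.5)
The plan is to work from the standard definition of a matrix function through the Hermite interpolating polynomial: for $A\in\mathbb C^{m\times m}$ with minimal polynomial $\psi_A(z)=\prod_k (z-\lambda_k)^{n_k}$, we set $\varphi(A):=p(A)$. Here $p$ is the unique polynomial of degree $<\deg\psi_A$ satisfying $p^{(j)}(\lambda_k)=\varphi^{(j)}(\lambda_k)$ for $0\le j\le n_k-1$. This is where ``$\varphi$ defined on the spectrum'' is used. If $A$ has a nontrivial Jordan block, derivative values of $\varphi$ are also needed; in our application $\mathbf L_\phi$ is symmetric, hence diagonalizable, so only the values $\varphi(\lambda_i)$ matter. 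We will also use the equivalent Jordan-form description $A=ZJZ^{-1}$, $\varphi(A)=Z\varphi(J)Z^{-1}$, where $\varphi$ acts on each Jordan block as the upper-triangular Toeplitz matrix with entries $\varphi^{(j)}(\lambda_k)/j!$.

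For item 1, $\varphi(A)=p(A)$ is a polynomial in $A$, so it commutes with $A$. For the transpose, note that $A^T$ has the same minimal polynomial as $A$, because $q(A^T)=q(A)^T$ for every polynomial $q$. Hence the same interpolating polynomial $p$ serves for $A^T$, and $\varphi(A^T)=p(A^T)=p(A)^T=\varphi(A)^T$. Item 3 follows the same way. The matrix $P^{-1}AP$ has the same minimal polynomial as $A$, and $q(P^{-1}AP)=P^{-1}q(A)P$ for polynomials, so $\varphi(P^{-1}AP)=p(P^{-1}AP)=P^{-1}p(A)P=P^{-1}\varphi(A)P$. For item 2, we use the Jordan form. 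The matrix $\varphi(J)$ is block upper triangular, and its diagonal entries are exactly $\varphi(\lambda_i)$ repeated with algebraic multiplicity. Since $\varphi(A)=Z\varphi(J)Z^{-1}$ is similar to $\varphi(J)$, it has the same eigenvalues.

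For item 4, we take $\varphi(z)=e^{sz}$ and use the fact that the interpolation definition agrees with the power series $e^{sA}=\sum_{n\ge0}s^nA^n/n!$. This agreement holds because the series is a limit of polynomials in $A$, and one can check it blockwise on the Jordan form. The series converges absolutely and uniformly for $s$ in compact sets, since $\sum |s|^n\|A\|^n/n!<\infty$. Therefore termwise differentiation in $s$ is justified, and it gives $\frac{d}{ds}e^{sA}=\sum_{n\ge1}s^{n-1}A^n/(n-1)!=Ae^{sA}$. Factoring $A$ on the right instead gives $e^{sA}A$; this equality is also an instance of item 1.

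The main obstacle is a subtlety in items 1 and 3. The interpolating polynomial $p$ depends on $A$, so one must check that it is the same polynomial for $A^T$ and for $P^{-1}AP$. This is settled by the invariance of the minimal polynomial noted above. The equivalence between the series definition and the interpolation definition of $e^{sA}$ is standard, and we would cite \cite{higham2008functions} for it rather than reprove it.
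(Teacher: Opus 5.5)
Your proof is correct and follows essentially the standard argument in \cite{higham2008functions}; the paper gives no proof of its own beyond that citation. As there, you define $\varphi(A)=p(A)$ through the Hermite interpolant and use invariance of the minimal polynomial under transposition and similarity, the Jordan form for the eigenvalues, and the power series for the exponential; your remark that derivative values of $\varphi$ are needed at eigenvalues with nontrivial Jordan blocks correctly sharpens the paper's looser phrasing of ``defined on the spectrum''. One point to tighten in item 4: termwise differentiation needs uniform convergence, on compact $s$-sets, of the differentiated series $\sum_{n\ge1}s^{n-1}A^n/(n-1)!$, not only of the original series. This holds by the M-test with majorant $\sum_{n\ge1}S^{n-1}\|A\|^n/(n-1)!=\|A\|e^{S\|A\|}$ for $|s|\le S$.
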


By using the differentiation of matrix exponentials in Lemma \ref{lemma:maxtix}, we obtain the exact solution for the ODE system \eqref{eqn:ODE_eqn}
\begin{align*}
    \sqrt{g}\odot u(t+\tau) = e^{-\tau\mathbf{L}_{\phi}}\big(\sqrt{g}\odot u(t)\big) + \int_{0}^{\tau}e^{-(\tau-s)\mathbf{L}_{\phi}}\mathbf{R}_\phi\big(u(t+s)\big)\text{d}s, \quad \forall t\geq0,\ \tau>0.
\end{align*}

\begin{remark}
A key step in our construction is the novel operator splitting method based on dividing both sides of \eqref{eqn:model_OK_fix} by $g^{1/2}$. This reformulation is essential for the subsequent discrete analysis.
\end{remark}

Now we can present the ETD1 and ETDRK2 schemes. Given a time interval $[0,T]$ and an integer $K>0$, we define the time step size $\tau = T/K$ and set $t_n = n\tau$ for $n=0,1,\cdots,K$. Let $U^{n}$ denote the approximate solution at time $t_n$, the ETD1 method is given by
\begin{align}
    \sqrt{g}\odot U^{n+1} = e^{-\mathbf{L}_{\phi}\tau}(\sqrt{g}\odot U^n) + \int_0^{\tau}e^{-\mathbf{L}_{\phi}(\tau-s)}\mathbf{R}_{\phi}(U^n) \text{d}s, \nonumber
\end{align}
which leads to
\begin{align}
    \sqrt{g}\odot U^{n+1} = \phi_0({\mathbf{L}_{\phi}\tau})(\sqrt{g}\odot U^n) + \tau\phi_1({\mathbf{L}_{\phi}\tau}) \mathbf{R}_{\phi}(U^n). \label{eqn:ETD1}
\end{align}
 The ETDRK2 scheme gives
\begin{align}
     \sqrt{g}\odot\Tilde{U}^{n+1} = \ &  e^{-\mathbf{L}_{\phi}\tau}(\sqrt{g}\odot U^n) + \int_0^{\tau}e^{-\mathbf{L}_{\phi}(\tau-s)} \mathbf{R}_{\phi}(U^n) \text{d}s, \nonumber\\
     \sqrt{g}\odot U^{n+1} = \ & e^{-\mathbf{L}_{\phi}\tau}(\sqrt{g}\odot U^n) \nonumber \\
     & + \int_0^{\tau}e^{-\mathbf{L}_{\phi}(\tau-s)}\left[\left(1-\frac{s}{\tau}\right)\mathbf{R}_{\phi}(U^n) + \frac{s}{\tau}\mathbf{R}_{\phi}(\Tilde{U}^{n+1})\right] \text{d}s, \nonumber
\end{align}
 or, equivalently,
\begin{align}\label{eqn:ETDRK2}
    & \sqrt{g}\odot\Tilde{U}^{n+1} = \phi_0(\mathbf{L}_{\phi}\tau)(\sqrt{g}\odot U^n) + \tau\phi_1({\mathbf{L}_{\phi}\tau}) \mathbf{R}_{\phi}(U^n),  \\
    & \sqrt{g}\odot U^{n+1} = \sqrt{g}\odot\Tilde{U}^{n+1} + \tau\phi_2(\mathbf{L}_{\phi}\tau)\left(\mathbf{R}_{\phi}(\Tilde{U}^{n+1}) - \mathbf{R}_{\phi}(U^n)\right). 
\end{align}
Here, $\phi_0(z) := e^{-z}$, $\phi_1(z) := \frac{1-e^{-z}}{z}$, and $\phi_2(z) := \frac{e^{-z}-1+z}{z^2}$ are all positive for all $z>0$.  

\subsubsection{Maximum Bound Principle}

In this subsection, we define the weighted norm for $v\in \mathbb{R}^N$ and the weighted matrix norm for $V\in\mathbb{R}^{N\times N}$ as 
    \begin{align}
        \|v\|_{\infty,w} := \max_{i}\frac{|v_{i}|}{w_i}, \quad \|V\|_{\infty,W} := \max_{i,j}\frac{|V_{ij}|}{W_{ij}}, \label{eqn:weighted_norm}
    \end{align}
    with the corresponding weights $w \in  \mathbb{R}_+^{N}$ and $W\in  \mathbb{R}_+^{N\times N}$. Then the induced weighted matrix norm for $M = (m_{ij})\in\mathbb{R}^{N\times N}$ can be represented as 
    \begin{align}
        \vvvert M _{\infty,w} := \sup_{0 \neq v \in \mathbb{R}^N}\frac{\|Mv\|_{\infty,w}}{\|v\|_{\infty,w}}. \label{eqn:induced_norm}
    \end{align}
    and the corresponding logarithmic norm $\mu_{\infty,w}(M)$ can be defined as
\begin{align}
    \mu_{\infty,w}(M): = \max_{i}\left(m_{ii} + \sum_{j\neq i}|m_{ij}|\frac{w_j}{w_i}\right). \label{eqn:log_norm}
\end{align}

In order to prove the discrete MBP of the ETD1 and ETDRK2 methods, it is essential to analyze the bounds of the exponential-type integrator. Different from the results given in \cite{du2019maximum,wang2025maximum} for the square domain, we prove the new bound with the given weighted norm in the following lemma. 

\begin{lemma}\label{lemma:phi_bdd}
    For any constant $B>0$ and $\tau>0$, we always have 
    \begin{align}
        \vvvert{e^{-\tau\mathbf{L}_{\phi}}}_{\infty,w} \leq e^{-B\tau}, \nonumber
    \end{align}
    where $w = G^{1/2}\mathbb{1}$ and $\mathbb{1}$ is the all-one vector.
\end{lemma}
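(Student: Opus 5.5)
The plan is to apply the standard logarithmic-norm bound $\vvvert e^{\tau A}_{\infty,w}\le e^{\tau\mu_{\infty,w}(A)}$ to $A=-\mathbf L_\phi$, and then to show that $\mu_{\infty,w}(-\mathbf L_\phi)\le -B$ for the weight $w=G^{1/2}\mathbb 1$.

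\textbf{Step 1 (reduction to the unweighted norm).} Let $D=\mathrm{diag}(w)=G^{1/2}$. Then $\|v\|_{\infty,w}=\|D^{-1}v\|_\infty$, and hence $\vvvert M_{\infty,w}=\|D^{-1}MD\|_\infty$, where $\|\cdot\|_\infty$ is the usual max-row-sum norm. Similarly, $\mu_{\infty,w}(M)$ in \eqref{eqn:log_norm} is exactly the classical $\infty$-logarithmic norm of $D^{-1}MD$. By Lemma \ref{lemma:maxtix}(3), $D^{-1}e^{-\tau\mathbf L_\phi}D=e^{-\tau D^{-1}\mathbf L_\phi D}$. Since $D^{-1}\mathbf L_\phi D = BI-\epsilon_u G^{-1}\Delta_{\mathrm S,h}$, it suffices to show
\[
\|e^{-\tau A}\|_\infty\le e^{-B\tau},\qquad A:=BI-\epsilon_u G^{-1}\Delta_{\mathrm S,h}.
\]

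\textbf{Step 2 (row sums of $-A$).} We have $-A=-BI+\epsilon_u G^{-1}\Delta_{\mathrm S,h}$. By \eqref{operator:diff}, row $(i,j)$ of $\Delta_{\mathrm S,h}$ has off-diagonal entries $g_{i\pm\frac12,j}/h_x^2$ and $g_{i,j\pm\frac12}/h_y^2$, all nonnegative. Its diagonal entry is minus the sum of these, so each row sums to zero because constants lie in the kernel. Multiplying on the left by the positive diagonal $G^{-1}$ scales each row by a positive number, which preserves both properties. Hence $-A$ is Metzler (consistent with Lemma \ref{lemma:LuPD}), and every row of $-A$ sums to exactly $-B$. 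It follows that
\[
\mu_\infty(-A)=\max_i\Big(a_{ii}+\sum_{j\ne i}|a_{ij}|\Big)=-B .
\]

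\textbf{Step 3 (exponential bound).} The conclusion follows in either of two ways. One can use the standard inequality $\|e^{\tau M}\|\le e^{\tau\mu(M)}$, proved from $\mu(M)=\lim_{s\to0^+}(\|I+sM\|-1)/s$ together with the product $e^{\tau M}=\lim (I+\tfrac{\tau}{n}M)^n$. More directly, write $e^{-\tau A}=e^{-B\tau}e^{\tau\epsilon_u G^{-1}\Delta_{\mathrm S,h}}$. The matrix $Q=\epsilon_u G^{-1}\Delta_{\mathrm S,h}$ is Metzler with zero row sums, so $e^{\tau Q}$ is entrywise nonnegative (by the Metzler argument of Lemma \ref{lemma:LuPD}) and satisfies $e^{\tau Q}\mathbb 1=\mathbb 1$, since $Q\mathbb 1=0$. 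Its $\infty$-norm is therefore exactly $1$, which gives $\vvvert e^{-\tau\mathbf L_\phi}_{\infty,w}\le e^{-B\tau}$.

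The main obstacle is the bookkeeping in Step 1. One must check that the similarity by $G^{1/2}$, combined with the symmetric $G^{-1/2}$ factors in $\mathbf L_\phi$, produces $G^{-1}\Delta_{\mathrm S,h}$ rather than $\Delta_{\mathrm S,h}G^{-1}$. Only the former has zero row sums, and this is precisely why the weight $w=\sqrt g$ is the one for which the bound holds.
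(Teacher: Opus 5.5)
Your proof is correct and follows the paper's argument. The paper computes the weighted logarithmic norm $\mu_{\infty,w}(-\mathbf{L}_{\phi})=-B$ directly from $Hw=0$, where $H=-G^{-1/2}\Delta_{\mathrm{S},h}G^{-1/2}$ and $w=G^{1/2}\mathbb{1}$; this is exactly your zero-row-sum property of $G^{-1}\Delta_{\mathrm{S},h}$ after conjugating by $G^{1/2}$, and the paper then applies the bound $\|e^{\tau M}\|\le e^{\tau\mu(M)}$. Your alternative finish in Step 3 (factoring out $e^{-B\tau}$ and noting that $e^{\tau\epsilon_u G^{-1}\Delta_{\mathrm{S},h}}$ is entrywise nonnegative and row-stochastic) is a more self-contained route that avoids citing the logarithmic-norm inequality, and it shows the bound actually holds with equality.
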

\begin{proof}
Recall that
\begin{align*}
    \mathbf{L}_{\phi}= BI_{N_xN_y}+\epsilon_u H,
\qquad
H:=-G^{-1/2}\Delta_{\mathrm{S},h}G^{-1/2}.
\end{align*}
We first set $M = -\mathbf{L}_{\phi}$, and compute the logarithmic norm $\mu_{\infty,w}(M)$. Since $w=G^{1/2}\mathbb{1}$ and $\mathbb{1}$ is the all-one vector, we have 
\begin{align*}
    Hw=-G^{-1/2}\Delta_{\mathrm{S},h}G^{-1/2}(G^{1/2}\mathbb{1})= -G^{-1/2}\Delta_{\mathrm{S},h}\mathbb{1} = 0,
\end{align*}
equivalently, for each $i$,
\begin{align*}
    \sum_{j} H_{ij}w_j =0,
\qquad\text{hence}\qquad
\sum_{j} H_{ij}\frac{w_j}{w_i}=0.
\end{align*}
Next, note that the entries of $M$ satisfy
\begin{align*}
m_{ii}=-B-\epsilon_u H_{ii}<0,
\qquad
m_{ij}=-\epsilon_u H_{ij}\geq0 \quad (i\neq j),
\end{align*}
since $H_{ii}>0$ and $H_{ij}\leq0$, then the logarithmic norm can be computed as
\begin{align}
\mu_{\infty,w}(M) &= \max_i\left( m_{ii}+\sum_{j\neq i}|m_{ij}|\frac{w_j}{w_i} \right) = \max_i\left(
-B-\epsilon_u H_{ii} +\sum_{j\neq i}(-\epsilon_u H_{ij})\frac{w_j}{w_i} \right) \nonumber\\
&= \max_i\left( -B-\epsilon_u \sum_j H_{ij}\frac{w_j}{w_i} \right) =\max_i(-B)=-B.
\end{align}
Finally, by the standard estimate relating the matrix norm and the corresponding logarithmic norm \cite{desoer1972measure,soderlind2006logarithmic}, we always have
\begin{align*}
    \vvvert{e^{\tau M}}_{\infty,w} \leq e^{\tau \cdot\mu_{\infty,w}(M)}, \quad \forall \tau\geq 0.
\end{align*}
Therefore, since $M = -\mathbf{L}_{\phi}$, we obtain that
\begin{align*}
    \vvvert{e^{-\tau \mathbf{L}_{\phi}}}_{\infty,w} \leq e^{\tau \cdot\mu_{\infty,w}(-\mathbf{L}_{\phi})} = e^{-B\tau}, \quad \forall\tau\geq0.
\end{align*}
\end{proof}

Denote $L_{W''} = \|W''\|_{C[0,1]}$, $L_{f''} = \|f''\|_{C[0,1]}$, $L_{f'} = \|f'\|_{C[0,1]}$, and $L_{g} = \|g\|_{C[0,1]}$, then we use the following lemma to prove the discrete MBP:

\begin{lemma}\label{lemma:MBPbdd}
    For any function $u\in \mathcal{M}_{h}$ with $0\leq u_{ij} \leq1$ for any $(i,j)\in S_{h}$, if the stabilizer  
    \begin{align}
        B\geq \frac{L_{W''}}{\epsilon_{u}} + \gamma\|(-\Delta_{\mathrm{S},h})^{-1}\|_{L^{\infty},h}L_{f''}L_{g} + M|\Omega|L_{f''}L_g, \nonumber
    \end{align}
    then we have 
    $\mathbf{R}_{\phi}(u)\geq 0$ and $\|\mathbf{R}_{\phi}(u)\|_{\infty,\sqrt{g}}\leq B$.
\end{lemma}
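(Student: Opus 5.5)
The plan is to reduce both claims to a pointwise, scalar monotonicity argument. Both $\mathbf{R}_\phi(u)\geq 0$ and $\|\mathbf{R}_\phi(u)\|_{\infty,\sqrt g}\leq B$ are equivalent to showing, at every grid point $(i,j)\in S_h$,
\begin{align*}
0\leq \frac{(\mathbf{R}_\phi(u))_{ij}}{\sqrt{g_{ij}}}\leq B .
\end{align*}
This equivalence uses $g_{ij}>0$ and the definition \eqref{eqn:weighted_norm} with weight $w=\sqrt g$.

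Dividing by $\sqrt{g_{ij}}$ removes the localization factor from every term of $\mathbf R_\phi$. Set
\begin{align*}
P:=(-\Delta_{\mathrm{S},h})^{-1}\big(g\odot f(u)\big),\qquad m:=\big\langle g\odot(f(u)-\bar u),\mathbb 1\big\rangle_h,\qquad c_{ij}:=\gamma P_{ij}+M m .
\end{align*}
Then
\begin{align*}
\frac{(\mathbf{R}_\phi(u))_{ij}}{\sqrt{g_{ij}}} = h_{ij}(u_{ij}),\qquad h_{ij}(s):=Bs-\frac{1}{\epsilon_u}W'(s)-c_{ij}f'(s).
\end{align*}
The key point is that $c_{ij}$ depends on the whole grid function $u$ but is a fixed number once $u$ is given. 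So I will treat $h_{ij}$ as a scalar function of $s\in[0,1]$ with frozen coefficient $c_{ij}$.

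First I bound $|c_{ij}|$. Since $0\leq u\leq 1$, we have $0\leq f(u)\leq 1$, so $\|g\odot f(u)\|_{L^\infty,h}\leq L_g$. Lemma \ref{lemma:Linfbound}, applied after removing the mean (which changes the sup norm by at most a factor that can be absorbed, or is handled directly by the mean-free extension), gives $|P_{ij}|\leq \|(-\Delta_{\mathrm{S},h})^{-1}\|_{L^\infty,h}L_g$. Assuming $\bar u\in[0,1]$, we have $|f(u)-\bar u|\leq 1$, hence $|m|\leq |\Omega|L_g$. Together,
\begin{align*}
|c_{ij}|\leq \gamma\|(-\Delta_{\mathrm{S},h})^{-1}\|_{L^\infty,h}L_g+M|\Omega|L_g .
\end{align*}

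Next I show $h_{ij}$ is nondecreasing on $[0,1]$. Differentiating,
\begin{align*}
h_{ij}'(s)=B-\frac{W''(s)}{\epsilon_u}-c_{ij}f''(s)\geq B-\frac{L_{W''}}{\epsilon_u}-|c_{ij}|L_{f''}.
\end{align*}
By the bound on $|c_{ij}|$ and the hypothesis on $B$, the right-hand side is nonnegative.

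Finally I evaluate the endpoints. Since $W'(0)=W'(1)=0$ and $f'(0)=f'(1)=0$, we get $h_{ij}(0)=0$ and $h_{ij}(1)=B$. Monotonicity then gives $0\leq h_{ij}(u_{ij})\leq B$ for $u_{ij}\in[0,1]$, which proves both claims.

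The main obstacle is the bound on $c_{ij}$, in particular matching the nonlocal term to the stated constant. This requires checking that the mean-removal convention in $(-\Delta_{\mathrm{S},h})^{-1}$ and the assumption $\bar u\in[0,1]$ give exactly the factors $L_g$ and $|\Omega|L_g$. The monotonicity and endpoint steps are routine once $c_{ij}$ is frozen.
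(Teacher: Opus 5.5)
Your proof is correct and is essentially the paper's argument. The paper applies the mean value theorem between $u$ and the constant states $0$ and $1$, where $W'(0)=W'(1)=f'(0)=f'(1)=0$ leave only the nonlocal coefficients evaluated at $u$; this is exactly your frozen $c_{ij}$ together with monotonicity of $h_{ij}$ on $[0,1]$.

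The obstacle you flag is harmless. Since $0\le g\odot f(u)\le L_g$, its mean also lies in $[0,L_g]$, so removing the mean keeps the sup norm at most $L_g$, with no factor $2$. The stated constant is therefore recovered exactly, assuming $\bar u\in[0,1]$ as the paper also tacitly does.
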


\begin{proof}
    Given any function $v,w\in \mathcal{M}_{h}$ with $v_{ij},w_{ij} \in [0,1]$, we can compute 
    \begin{align}
        \mathbf{R}_{\phi}(v) - \mathbf{R}_{\phi}(w) 
        = \ & B\sqrt{g}\odot(v-w) - \frac{1}{\epsilon_{u}} \sqrt{g}\odot W''(\zeta)(v-w) \nonumber\\
     &-\gamma\sqrt{g}\odot
    \biggl[
    \begin{aligned}[t]
        &(-\Delta_{\mathrm{S},h})^{-1}
        \bigl[g\odot f(v)\bigr]
        \odot
        \bigl(f'(w)-f''(\xi)(v-w)\bigr)
        \\
        &\quad
        -(-\Delta_{\mathrm{S},h})^{-1}
        \bigl[g\odot f(w)\bigr]\odot f'(w)
    \end{aligned}
    \biggr]
    \nonumber\\
    &-M\sqrt{g}\odot
    \biggl[
    \begin{aligned}[t]
        &\left\langle g\odot(f(v)-\bar{u}),\mathbb{1}\right\rangle_h
        \bigl(f'(w)-f''(\xi)(v-w)\bigr)
        \\
        &\quad
        -\left\langle g\odot(f(w)-\bar{u}),\mathbb{1}\right\rangle_h f'(w)
    \end{aligned}
    \biggr]. \nonumber
    \end{align}
    Here, $\zeta,\xi$ are located between $v$ and $w$. Next, taking $u\in \mathcal{M}_{h}$ with $0\leq u_{ij} \leq1$ for any $(i,j)\in S_{h}$, if we choose $v = u$ and $w = 0$, then we have $f'(0) = 0$, $0\leq f(u)\leq1$, $\|f(u) - \bar{u}\|_{L^{\infty},h} = \max\{1-\bar{u},\bar{u}\}\leq1$, and 
    \begin{align}
        & \mathbf{R}_{\phi}(u) - \mathbf{R}_{\phi}(0) \nonumber\\ = \ & B\sqrt{g}\odot u - \frac{1}{\epsilon_{u}} W''(\zeta)\sqrt{g}\odot u \nonumber\\
        & - \gamma \sqrt{g}\odot(-\Delta_{\mathrm{S},h})^{-1}\left[g\odot f(u)\right]\odot f''(\xi)u - M\sqrt{g}\odot\left\langle g\odot(f(u)-\bar{u}),\mathbb{1}\right\rangle_h\odot f''(\xi)u  \nonumber \\
        \geq\ & \left(B -\frac{L_{W''}}{\epsilon_{u}} - \gamma\|(-\Delta_{\mathrm{S},h})^{-1}\|_{L^{\infty},h}L_{f''}L_{g} - M|\Omega|L_{f''}L_g\right)\sqrt{g}\odot u \geq 0. \nonumber
    \end{align}
    Therefore, we have $\mathbf{R}_{\phi}(u) \geq \mathbf{R}_{\phi}(0) = 0$.

    Similarly, we take $v = u$ and $w = 1$, then $f'(1) = 0$, $0\leq1-u\leq 1$, and 
        \begin{align}
        & \mathbf{R}_{\phi}(u) - \mathbf{R}_{\phi}(1) \nonumber\\ 
        = \ & -B\sqrt{g}\odot(1-u) + \frac{1}{\epsilon_{u}} \sqrt{g}\odot W''(\zeta)(1-u) \nonumber\\
        & + \gamma \sqrt{g}\odot(-\Delta_{\mathrm{S},h})^{-1}\left[g\odot f(u)\right]\odot f''(\xi)(1-u) \nonumber\\
        & + M\sqrt{g}\odot\left\langle g\odot(f(u)-\bar{u}),1\right\rangle_h\odot f''(\xi)(1-u)  \nonumber \\
        \leq\ & -\left(B -\frac{L_{W''}}{\epsilon_{u}} - \gamma\|(-\Delta_{\mathrm{S},h})^{-1}\|_{L^{\infty},h}L_{f''}L_{g} - M|\Omega|L_{f''}L_g\right)\sqrt{g}\odot(1-u) \leq 0, \nonumber
    \end{align}
    that yields $\|\mathbf{R}_{\phi}(u)\|_{\infty,\sqrt{g}} \leq \|\mathbf{R}_{\phi}(1)\|_{\infty,\sqrt{g}} = \|B\sqrt{g}\|_{\infty,\sqrt{g}} = B$.
\end{proof}

Next, we prove the discrete MBP for the ETD1 and ETDRK2 schemes:
\begin{theorem}\label{thm:MBP}
    Assume that the initial condition $U^0$ satisfies $0 \leq U^0 \leq 1$, then for any time step size $\tau > 0$, the ETD1 and ETDRK2 schemes both preserve the discrete maximum bound principle (MBP), that is 
    \begin{align}\label{con:MPP}
        0 \leq U^n\leq 1,  \quad \forall \ n\geq0. \nonumber
    \end{align}
    provided that 
    \begin{align}
        B\geq \frac{L_{W''}}{\epsilon_{u}} + \gamma\|(-\Delta_{\mathrm{S},h})^{-1}\|_{L^{\infty},h}L_{f''}L_{g} + M|\Omega|L_{f''}L_g. \label{eqn:cond_MBP}
    \end{align}
\end{theorem}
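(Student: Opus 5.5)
Our plan is an induction on $n$ in the variable $V^n:=\sqrt{g}\odot U^n$. In this variable the bound $0\le U^n\le 1$ is equivalent to $0\le V^n\le w$ entrywise, with $w=G^{1/2}\mathbb{1}$. It is also equivalent to $V^n\ge 0$ together with $\|V^n\|_{\infty,w}\le 1$.

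\textbf{ETD1.} Assume $0\le U^n\le 1$. By Lemma~\ref{lemma:MBPbdd}, under \eqref{eqn:cond_MBP} we have $\mathbf{R}_\phi(U^n)\ge 0$ and $\|\mathbf{R}_\phi(U^n)\|_{\infty,w}\le B$. Lemma~\ref{lemma:LuPD} says $e^{-s\mathbf{L}_\phi}$ is entrywise nonnegative for every $s\ge0$. Hence both terms in
\[
\sqrt{g}\odot U^{n+1}=e^{-\tau\mathbf{L}_\phi}V^n+\int_0^\tau e^{-(\tau-s)\mathbf{L}_\phi}\mathbf{R}_\phi(U^n)\,\mathrm{d}s
\]
are nonnegative, so $U^{n+1}\ge0$.

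For the upper bound, take the weighted norm and apply Lemma~\ref{lemma:phi_bdd} inside the integral:
\[
\|V^{n+1}\|_{\infty,w}\le e^{-B\tau}\|V^n\|_{\infty,w}+\int_0^\tau e^{-B(\tau-s)}B\,\mathrm{d}s\le e^{-B\tau}+(1-e^{-B\tau})=1 .
\]
Combined with $V^{n+1}\ge 0$ this gives $U^{n+1}\le 1$. Note that Lemma~\ref{lemma:MBPbdd} states its bound with weight $\sqrt{g}$, which is exactly $w$ here, so the weights match.

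\textbf{ETDRK2.} The intermediate stage $\tilde U^{n+1}$ is an ETD1 step, so it satisfies $0\le\tilde U^{n+1}\le1$ by the argument above. Lemma~\ref{lemma:MBPbdd} then applies to $\mathbf{R}_\phi(\tilde U^{n+1})$ as well. The final stage should be handled in its integral form rather than the $\phi_2$ form:
\[
V^{n+1}=e^{-\tau\mathbf{L}_\phi}V^n+\int_0^\tau e^{-(\tau-s)\mathbf{L}_\phi}\Big[(1-\tfrac{s}{\tau})\mathbf{R}_\phi(U^n)+\tfrac{s}{\tau}\mathbf{R}_\phi(\tilde U^{n+1})\Big]\mathrm{d}s .
\]
The bracket is a convex combination, for $s\in[0,\tau]$, of two vectors that are each nonnegative with weighted norm at most $B$. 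Therefore the bracket is itself nonnegative with weighted norm at most $B$. The same two estimates as in the ETD1 case then give $V^{n+1}\ge0$ and $\|V^{n+1}\|_{\infty,w}\le1$, and induction closes the proof.

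\textbf{Main obstacle.} The delicate step is the weighted-norm estimate, specifically moving the norm inside the integral and applying Lemma~\ref{lemma:phi_bdd} at each $s$. This is legitimate because the induced norm is subadditive and the bound of Lemma~\ref{lemma:phi_bdd} holds uniformly for every $s\ge0$. The remaining care is bookkeeping: converting between the condition $0\le U\le 1$ and the weighted bound on $V$ relies on $g>0$ entrywise, which holds by the standing assumption $g\ge g_{\min}>0$.
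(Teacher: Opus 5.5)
Your proposal is correct and follows essentially the same argument as the paper. Both proceed by induction in the variable $\sqrt{g}\odot U^n$: nonnegativity comes from Lemma~\ref{lemma:LuPD} together with $\mathbf{R}_\phi(U^n)\ge 0$ from Lemma~\ref{lemma:MBPbdd}, and the upper bound comes from the weighted-norm decay $e^{-Bs}$ of Lemma~\ref{lemma:phi_bdd} combined with $\|\mathbf{R}_\phi(U^n)\|_{\infty,\sqrt{g}}\le B$; the ETDRK2 step is likewise handled through its integral form, using the MBP of the ETD1 predictor $\tilde U^{n+1}$ and the convex combination of the two nonlinear evaluations.
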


\begin{proof}
     At first, we prove that the ETD1 scheme preserves the MBP by induction. Following the similar idea as in \cite{wang2025maximum} with a special norm, assume that at step $n=k$, the numerical solution satisfies $0\leq U^k \leq 1$, then $\|\sqrt{g}\odot U^k\|_{\infty,\sqrt{g}}\leq 1$ and $\sqrt{g}\odot U^k \geq 0$. From Lemma \ref{lemma:MBPbdd}, we have $\mathbf{R}_{\phi}(U^k) \geq 0$. Then, at the next step $n = k+1$, the ETD1 scheme gives
    \begin{align}
        \sqrt{g}\odot U^{k+1} = e^{-\mathbf{L}_{\phi}\tau}\left(\sqrt{g}\odot U^k\right)  + \int_0^{\tau}e^{-\mathbf{L}_{\phi}(\tau-s)} \text{d}s \ \mathbf{R}_{\phi}(U^k) \geq 0. \nonumber 
    \end{align} 
 Thus, by Lemma \ref{lemma:LuPD}, we obtain that $e^{-\mathbf{L}_{\phi}\tau}$ and $e^{-\mathbf{L}_{\phi}(\tau-s)}$ are both nonnegative. Then since $g > 0$, we can get $U^{k+1}\geq 0 $. Next, taking the weighted norm yields
    \begin{align}
        \|\sqrt{g}\odot U^{k+1}\|_{\infty,\sqrt{g}} \leq \ & \vvvert{e^{-\mathbf{L}_{\phi}\tau}}_{\infty,\sqrt{G}\mathbb{1}}\|\sqrt{g}\odot U^{k}\|_{\infty,\sqrt{g}} \nonumber \\
        &+ \int_0^{\tau}\vvvert{e^{-\mathbf{L}_{\phi}(\tau-s)}}_{\infty,\sqrt{G}\mathbb{1}} \text{d}s\ \| \mathbf{R}_{\phi}(U^k) \|_{\infty,\sqrt{g}}. \nonumber
    \end{align}
    By Lemma \ref{lemma:phi_bdd}, we have the estimates
    \begin{align}
        \vvvert{e^{-\mathbf{L}_{\phi}\tau}}_{\infty,\sqrt{G}\mathbb{1}} \leq e^{-B\tau}, \quad \int_0^{\tau}\vvvert{e^{-\mathbf{L}_{\phi}(\tau-s)}}_{\infty,\sqrt{G}\mathbb{1}} \text{d}s \leq \frac{1-e^{-B\tau}}{B}. \nonumber
    \end{align}
    Moreover, Lemma \ref{lemma:MBPbdd} leads to $\|\mathbf{R}_{\phi}(U^{k})\|_{\infty,\sqrt{g}}\leq B$. Therefore, combining these estimates, we obtain
    \begin{align}
        \|\sqrt{g}\odot U^{k+1}\|_{\infty,\sqrt{g}} \leq e^{-B\tau} + \frac{1-e^{-B\tau}}{B}B \leq 1. \nonumber
    \end{align}
    By the definition of weighted norm \eqref{eqn:weighted_norm}, the inequality leads to $ |U^{k+1}_{ij}|\leq 1$ with $U^{k+1}_{ij}$ any entries in $U^{k+1}$. Therefore, we obtain $U^{k+1}\leq 1$. Thus, the ETD1 scheme preserves the discrete MBP.

    Then, we apply a similar argument to the ETDRK2 scheme. Again, assume $0 \leq U^k \leq 1$. From the MBP of the ETD1 scheme, the intermediate value $\Tilde{U}^{k+1}$ defined there also satisfies $0\leq \Tilde{U}^{k+1} \leq 1$. Then, for $n = k+1$, and $s\in[0,\tau]$, one has 
    \begin{align}
        \sqrt{g}\odot U^{k+1} = e^{-\mathbf{L}_{\phi}\tau}(\sqrt{g}\odot U^n)  + \int_0^{\tau}e^{-\mathbf{L}_{\phi}(\tau-s)} \left[\left(1-\frac{s}{\tau}\right)\mathbf{R}_{\phi}(U^{k}) + \frac{s}{\tau}  \mathbf{R}_{\phi}(\Tilde{U}^{k+1})\right] \text{d}s, \nonumber
    \end{align}
    that results in $U^{k+1}\geq0$ from Lemma \ref{lemma:LuPD} and the ETD1 scheme. Next, Lemma \ref{lemma:MBPbdd} also leads to
    \begin{align}
         \left\|\left(1-\frac{s}{\tau}\right)\mathbf{R}_{\phi}(U^{k}) + \frac{s}{\tau}  \mathbf{R}_{\phi}(\Tilde{U}^{k+1})\right\|_{\infty,\sqrt{g}} 
        \leq  \ \left(1-\frac{s}{\tau}\right)B + \frac{s}{\tau}B = B. \nonumber
    \end{align}
    Therefore, the inequality follows 
    \begin{align}
        & \|\sqrt{g}\odot U^{k+1}\|_{\infty,\sqrt{g}} \nonumber \\
        \leq \ &   \vvvert{e^{-\mathbf{L}_{\phi}\tau}}_{\infty,\sqrt{G}\mathbb{1}}\|\sqrt{g}\odot U^{k}\|_{\infty,\sqrt{g}} \nonumber \\
        & + \int_0^{\tau} \vvvert{e^{-\mathbf{L}_{u}(\tau-s)}}_{\infty,\sqrt{G}\mathbb{1}} \text{d}s\left\|\left(1-\frac{s}{\tau}\right)\mathbf{R}_{\phi}(U^{k}) + \frac{s}{\tau}  \mathbf{R}_{\phi}(\Tilde{U}^{k+1})\right\|_{\infty,\sqrt{g}} \nonumber \\
        \leq \ &  e^{-B\tau} + \frac{1-e^{-B\tau}}{B}B \leq 1. \nonumber
    \end{align}
    Hence, $U^{k+1}\leq 1$, and the ETDRK2 scheme also preserves the discrete MBP.
\end{proof}

\subsubsection{Energy Stability}

We then establish energy decay for the discrete \\
membrane-associated OK energy  
\begin{align}
    E_{\phi,h}[U^n] =\ & \frac{\epsilon_u}{2}\left\langle -\Delta_{\mathrm{S},h}U^n,U^n \right\rangle_{h} + \frac{1}{\epsilon_u}\left\langle g \odot W(U^n), \mathbb{1}\right\rangle_{h} \nonumber\\
    & + \frac{\gamma}{2}\left\langle (-\Delta_{\mathrm{S},h})^{-1}\left(g\odot f(U^{n})\right) , g\odot f(U^{n})\right\rangle_{h} + \frac{M}{2}\left\langle g\odot(f(U^{n}) - \bar{u}),\mathbb{1} \right\rangle_{h}^2, \label{eqn:Energy_dis}
\end{align}
for the ETD1 and ETDRK2 schemes in the following theorems. 

\begin{theorem}\label{thm:Energy}
    Given  
    \begin{align}
        B = \frac{L_{W''}}{\epsilon_{u}}+\left(\frac{\gamma}{2}\|(-\Delta_{\mathrm{S},h})^{-1}\|_{L^{\infty},h}+\frac{M}{2}|\Omega|\right)(L_{f''}+L_{f'}^2)L_g, \label{eqn:cond_ES}
    \end{align}
    and any time step $\tau>0$, we always have the energy stability for both the ETD1 \eqref{eqn:ETD1} and ETDRK2 \eqref{eqn:ETDRK2} schemes, i.e. 
    \begin{align*}
    E_{\phi,h}[U^{n+1}] \leq E_{\phi,h}[U^n].
    \end{align*}
\end{theorem}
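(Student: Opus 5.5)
We follow the standard ETD energy-stability argument of \cite{du2021maximum,wang2025maximum}, transplanted to the variable $V:=\sqrt{g}\odot U$, in which the scheme is linear-plus-explicit. Throughout we use Theorem~\ref{thm:MBP}, so that $0\le U^n,\tilde U^{n+1}\le 1$. Note that the value of $B$ in \eqref{eqn:cond_ES} dominates the right-hand side of \eqref{eqn:cond_MBP}, because $L_{f'}^2L_g\ge 0$ and the factor $\tfrac12(L_{f''}+L_{f'}^2)$ should still be compared with $L_{f''}$. If this comparison fails, we will assume the MBP separately.

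First we rewrite the discrete energy \eqref{eqn:Energy_dis} in terms of $V$. Since $-\Delta_{\mathrm{S},h}=G^{1/2}HG^{1/2}$, we have
\[
\tfrac{\epsilon_u}{2}\langle-\Delta_{\mathrm{S},h}U,U\rangle_h=\tfrac{\epsilon_u}{2}\langle HV,V\rangle_h .
\]
Hence
\[
E_{\phi,h}=\tfrac12\langle(\mathbf L_\phi-BI)V,V\rangle_h+\mathcal F(U),
\]
where $\mathcal F$ collects the potential, nonlocal and mass terms. The nonlinear part of the scheme satisfies
\[
\mathbf R_\phi(U)=BV-G^{-1/2}\nabla_U\mathcal F(U)
\]
(gradient with respect to $\langle\cdot,\cdot\rangle_h$), so $\mathbf R_\phi(U)=BV-\nabla_V\mathcal F$, with $\mathcal F$ regarded as a function of $V=G^{1/2}U$. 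The key nonlinear estimate is a one-sided Taylor bound for $\mathcal F$ along the segment between $U^n$ and $U^{n+1}$, both of which lie in $[0,1]$:
\[
\mathcal F(U^{n+1})-\mathcal F(U^n)\le\langle\nabla_U\mathcal F(U^n),U^{n+1}-U^n\rangle_h+\tfrac{B}{2}\langle g\odot(U^{n+1}-U^n),U^{n+1}-U^n\rangle_h .
\]
To prove it we bound each Hessian contribution:
\begin{itemize}
\item $W''/\epsilon_u\cdot g$ by $L_{W''}L_g/\epsilon_u$ in the $g$-weighted form;
\item the nonlocal term, whose second variation is $\langle(-\Delta_{\mathrm{S},h})^{-1}(gf'\delta),gf'\delta\rangle+\langle(-\Delta_{\mathrm{S},h})^{-1}(gf),gf''\delta^2\rangle$, by $\|(-\Delta_{\mathrm{S},h})^{-1}\|_{L^\infty,h}(L_{f'}^2+L_{f''})$ times $g$-weighted $\delta^2$ (this requires a Cauchy--Schwarz/$L^\infty$ argument, using Lemma~\ref{lemma:Linfbound});
\item the mass term, analogously with $|\Omega|$.
\end{itemize}
This produces exactly the constant \eqref{eqn:cond_ES}, with the factor $1/2$ coming from Taylor's remainder. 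Writing the quadratic term as $\tfrac B2\|V^{n+1}-V^n\|^2_{L^2,h}$ turns the bound into
\[
\mathcal F(U^{n+1})-\mathcal F(U^n)\le\langle BV^n-\mathbf R_\phi(U^n),V^{n+1}-V^n\rangle_h+\tfrac B2\|V^{n+1}-V^n\|^2 .
\]

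For ETD1, $\mathbf L_\phi$ is symmetric positive definite, so \eqref{eqn:ETD1} can be inverted as
\[
\mathbf R_\phi(U^n)=\big(\tau\phi_1(\tau\mathbf L_\phi)\big)^{-1}(V^{n+1}-\phi_0(\tau\mathbf L_\phi)V^n).
\]
Substituting this and the identity
\[
\tfrac12\langle AV^{n+1},V^{n+1}\rangle-\tfrac12\langle AV^n,V^n\rangle=\langle AV^{n+1},\delta\rangle-\tfrac12\langle A\delta,\delta\rangle,
\]
with $A=\mathbf L_\phi-BI$ and $\delta=V^{n+1}-V^n$, we obtain
\[
E^{n+1}-E^n\le-\langle \Psi(\tau\mathbf L_\phi)\delta,\delta\rangle_h,
\]
where $\Psi(z)=\tau^{-1}\big(\phi_1(z)^{-1}-z/2\big)$ after the scalars $B$ cancel. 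Since $\frac{z}{1-e^{-z}}-\frac z2\ge 1>0$ for $z>0$, the spectral theorem (Lemma~\ref{lemma:maxtix}) gives $\Psi\ge0$ on the spectrum of $\mathbf L_\phi$, and this yields decay.

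For ETDRK2 we apply the same Taylor step from $U^n$ to $U^{n+1}$, using $\tilde U^{n+1}$ as an intermediate point. We express $\mathbf R_\phi(\tilde U^{n+1})-\mathbf R_\phi(U^n)$ through $\phi_2$ and split $E^{n+1}-E^n=(E^{n+1}-\tilde E)+(\tilde E-E^n)$. The second difference is nonpositive by the ETD1 result. For the first, we use the Lipschitz bound $\|\nabla_V\mathcal F(\tilde U)-\nabla_V\mathcal F(U^n)\|\le B\|\tilde V-V^n\|$, valid from the same Hessian estimates, together with the positivity of the matrix-function combinations of $\phi_0,\phi_1,\phi_2$ as in \cite{du2021maximum}. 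The main obstacle is this ETDRK2 step: one must show that a certain quadratic form in $(\tilde V-V^n,V^{n+1}-\tilde V)$ built from $\phi_1,\phi_2$ is negative semidefinite for all $z\ge B\tau$ with $B$ fixed by \eqref{eqn:cond_ES}. I would first try the scalar reduction along eigenvectors of $\mathbf L_\phi$, which is valid because $\mathbf L_\phi$ is symmetric, and check the resulting $2\times2$ inequality, as done for Allen--Cahn ETDRK2. A secondary difficulty is obtaining the nonlocal Hessian bound in the $g$-weighted $L^2$ form rather than in $L^\infty$.
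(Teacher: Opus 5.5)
Your ETD1 argument is essentially the paper's: you rewrite in $V=\sqrt{g}\odot U$, Taylor-bound the nonlinear part, substitute the scheme, and use a scalar sign condition on the spectrum of $\mathbf{L}_\phi$. The ETDRK2 half of the theorem, however, is not proved. You leave its central estimate open, and the sketch goes wrong in two places.

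First, you cannot just use $E_{\phi,h}[\tilde U^{n+1}]-E_{\phi,h}[U^n]\le 0$. The bound for $E_{\phi,h}[U^{n+1}]-E_{\phi,h}[\tilde U^{n+1}]$ contains a cross term, and that term must be absorbed by the \emph{negative quadratic} bound from the first stage. So the first-stage bound has to be kept, not discarded. Second, no Lipschitz estimate on $\nabla_V\mathcal F$ is needed; what closes the argument is the exact second-stage relation.

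Concretely, apply the one-step inequality $E_{\phi,h}[X]-E_{\phi,h}[Y]\le\langle\mathbf{L}_\phi(\sqrt g\odot X)-\mathbf{R}_\phi(Y),\sqrt g\odot(X-Y)\rangle_h$ to $(X,Y)=(\tilde U^{n+1},U^n)$ and to $(U^{n+1},\tilde U^{n+1})$. Put $p=\sqrt g\odot(\tilde U^{n+1}-U^n)$ and $q=\sqrt g\odot(U^{n+1}-\tilde U^{n+1})$. Then substitute $\mathbf{R}_\phi(U^n)=\mathbf{L}_\phi(I-e^{-\tau\mathbf{L}_\phi})^{-1}p+\mathbf{L}_\phi(\sqrt g\odot U^n)$ and $\mathbf{R}_\phi(\tilde U^{n+1})-\mathbf{R}_\phi(U^n)=\tau\mathbf{L}_\phi^2(e^{-\tau\mathbf{L}_\phi}-I+\tau\mathbf{L}_\phi)^{-1}q$. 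With $H_i=\eta_i(\tau\mathbf{L}_\phi)/\tau$, $\eta_1(z)=z-\frac{z}{1-e^{-z}}$ and $\eta_2(z)=z-\frac{z^2}{e^{-z}-1+z}$, this gives
\[
E_{\phi,h}[U^{n+1}]-E_{\phi,h}[U^n]\le\langle H_1p,p\rangle_h+\langle H_1p,q\rangle_h+\langle H_2q,q\rangle_h
=\tfrac{1}{2}\langle H_1p,p\rangle_h+\tfrac{1}{2}\langle H_1(p+q),p+q\rangle_h+\langle (H_2-\tfrac{1}{2}H_1)q,q\rangle_h .
\]
Everything then reduces to $\eta_1<0$ and $\eta_2(z)-\tfrac{1}{2}\eta_1(z)\le 0$. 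The latter is equivalent to $k(z):=e^{-z}(3+z)-e^{-2z}\le 2$, which holds for \emph{every} $z>0$ because $k(0)=2$ and $k'(z)=e^{-z}(2e^{-z}-2-z)<0$. Your eigenvector $2\times 2$ reduction would also work: the form $\eta_1x^2+\eta_1xy+\eta_2y^2$ is nonpositive iff $\eta_2\le\eta_1/4$, which follows from $\eta_2\le\eta_1/2$. But no restriction $z\ge B\tau$ enters; $B$ is used only to cancel the Taylor remainders in the one-step inequality.

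There are also two smaller corrections.

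\emph{Taylor constant.} The remainder constant in your Taylor bound is not $B/2$. The factors $\frac{\gamma}{2}$ and $\frac{M}{2}$ in \eqref{eqn:cond_ES} are already the Taylor halves of the Hessian bounds, so you only get a bound by $B\|V^{n+1}-V^n\|_{L^2,h}^2$. This is harmless. You then arrive at $E_{\phi,h}[U^{n+1}]-E_{\phi,h}[U^n]\le\langle\mathbf{L}_\phi V^{n+1}-\mathbf{R}_\phi(U^n),\delta\rangle_h-\frac{1}{2}\langle(\mathbf{L}_\phi-BI)\delta,\delta\rangle_h$, which is exactly the paper's inequality. Your $\Psi$ also still suffices, since $\Psi(z)>\frac{z}{2\tau}\ge\frac{B}{2}$ on the spectrum of $\tau\mathbf{L}_\phi$.

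\emph{MBP comparison.} The comparison you hedge on does fail. Here $L_{f''}=6$ and $L_{f'}=\frac{3}{2}$, so $\frac{1}{2}(L_{f''}+L_{f'}^2)=\frac{33}{8}<6$, and \eqref{eqn:cond_ES} does not imply \eqref{eqn:cond_MBP}. You must indeed assume the MBP separately, e.g.\ by taking $B$ at least the larger of the two constants; the energy argument tolerates any larger $B$. The paper's proof relies on the same $[0,1]$ bounds without comment, so this caveat is shared with it rather than specific to your route.
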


\begin{proof}
We first prove for the ETD1 scheme \eqref{eqn:ETD1}. The energy difference between two successive time steps $U^{n+1}$ and $U^n$ is given by
\begin{align}
    &E_{\phi,h}[U^{n+1}]-E_{\phi,h}[U^n]
    ={}
    \underbrace{
    \begin{aligned}[t]
        &\frac{\epsilon_u}{2}
        \left\langle
        -\Delta_{\mathrm{S},h}U^{n+1},U^{n+1}
        \right\rangle_h
        \\
        &\quad
        -\frac{\epsilon_u}{2}
        \left\langle
        -\Delta_{\mathrm{S},h}U^{n},U^{n}
        \right\rangle_h
    \end{aligned}
    }_{\textbf{I}}
    +
    \underbrace{
    \begin{aligned}[t]
        &\frac{1}{\epsilon_u}
        \left\langle
        g\odot W(U^{n+1}),\mathbb{1}
        \right\rangle_h
        \\
        &\quad
        -\frac{1}{\epsilon_u}
        \left\langle
        g\odot W(U^{n}),\mathbb{1}
        \right\rangle_h
    \end{aligned}
    }_{\textbf{II}}
    \nonumber\\
    &+
    \underbrace{
    \begin{aligned}[t]
        &\frac{\gamma}{2}
        \left\langle
        (-\Delta_{\mathrm{S},h})^{-1}
        \bigl(g\odot f(U^{n+1})\bigr),
        g\odot f(U^{n+1})
        \right\rangle_h
        \\
        &\quad
        -\frac{\gamma}{2}
        \left\langle
        (-\Delta_{\mathrm{S},h})^{-1}
        \bigl(g\odot f(U^{n})\bigr),
        g\odot f(U^{n})
        \right\rangle_h
    \end{aligned}
    }_{\textbf{III}}
    +
    \underbrace{
    \begin{aligned}[t]
        &\frac{M}{2}
        \left\langle
        g\odot\bigl(f(U^{n+1})-\bar{u}\bigr),\mathbb{1}
        \right\rangle_h^2
        \\
        &\quad
        -\frac{M}{2}
        \left\langle
        g\odot\bigl(f(U^{n})-\bar{u}\bigr),\mathbb{1}
        \right\rangle_h^2
    \end{aligned}
    }_{\textbf{IV}} .
    \nonumber
\end{align}
     Using the identity
    \begin{align}
         a\cdot(a-b) = \frac{1}{2}\left[a^2-b^2+(a-b)^2\right], \quad\quad b\cdot(a-b) = \frac{1}{2}\left[a^2-b^2-(a-b)^2\right], \nonumber
    \end{align}
 the term $\mathbf{I}$ satisfies 
    \begin{align}
        \textbf{I} = \ & \epsilon_u\left\langle -\Delta_{\mathrm{S},h}U^{n+1},U^{n+1}-U^{n} \right\rangle_{h} - \frac{\epsilon_u}{2}\left\langle -\Delta_{\mathrm{S},h}(U^{n+1}-U^{n}),U^{n+1}-U^{n} \right\rangle_{h}. \nonumber 
    \end{align}
    By Taylor expansion, $W'(U^{n})(U^{n+1}-U^n) = W(U^{n+1})-W(U^n)-\frac{W''(\xi^n)}{2}(U^{n+1}-U^n)^2$, where $\xi^n$ is between $U^n$ and $U^{n+1}$, term $\mathbf{II}$ is bounded by
     \begin{align}
        \textbf{II} 
        \leq \frac{1}{\epsilon_u}\left\langle \sqrt{g}\odot W'(U^{n}),\sqrt{g}\odot (U^{n+1}-U^{n}) \right\rangle_{h} + \frac{L_{W''}}{\epsilon_u}\| \sqrt{g}\odot (U^{n+1}-U^{n}) \|_{L^2,h}^2. \nonumber
    \end{align}
 Similarly, term $\mathbf{III}$ and $\mathbf{IV}$ are given by
 \begin{align}
        \textbf{III} 
        \leq \ & \gamma\left\langle \sqrt{g}\odot (-\Delta_{\mathrm{S},h})^{-1}\left(g\odot f(U^{n})\right)\odot f'(U^n), \sqrt{g}\odot (U^{n+1} - U^{n}) \right\rangle_{h} \nonumber\\
        & + \frac{\gamma}{2}(L_{f''}+L_{f'}^2)L_g\|(-\Delta_{\mathrm{S},h})^{-1}\|_{L^{\infty},h}\| \sqrt{g}\odot (U^{n+1}-U^{n}) \|_{L^2,h}^2, \nonumber\\
        \textbf{IV}
        \leq \ & M\left\langle \sqrt{g}\odot \left\langle g\odot (f(U^{n}) - \bar{u}),\mathbb{1} \right\rangle_{h}\odot f'(U^{n}),\sqrt{g}\odot (U^{n+1} - U^{n}) \right\rangle_{h} \nonumber \\
        & + \frac{M}{2}|\Omega|(L_{f''}+L_{f'}^2)L_g\| \sqrt{g}\odot (U^{n+1}-U^{n}) \|_{L^2,h}^2 .\nonumber
    \end{align}
    Combining the above inequalities and taking 
    \begin{align*}
        B = \frac{L_{W''}}{\epsilon_u}+\left(\frac{\gamma}{2}\|(-\Delta_{\mathrm{S},h})^{-1}\|_{L^{\infty},h}+\frac{M}{2}|\Omega|\right)(L_{f''}+L_{f'}^2)L_g,
    \end{align*}
    we can get the difference between energies as 
    \begin{align}
         E_{\phi,h}[U^{n+1}] - E_{\phi,h}[U^n] \leq \ & \left\langle \mathbf{L}_{\phi}(\sqrt{g}\odot U^{n+1}) - \mathbf{R}_{\phi}(U^n), \sqrt{g}\odot(U^{n+1} - U^{n})\right\rangle_{h}. \nonumber
    \end{align}
    For the ETD1 scheme, we can solve for $\mathbf{R}_{\phi}(U^n)$ as
        \begin{align}
        \mathbf{R}_{\phi}(U^n) = \ & \left(I-e^{-\mathbf{L}_{\phi}\tau}\right)^{-1}\mathbf{L}_{\phi}\left(\sqrt{g}\odot(U^{n+1} -U^n)\right)+\mathbf{L}_{\phi}(\sqrt{g}\odot U^{n}). \nonumber
    \end{align}
    That leads to 
    \begin{align}
        \mathbf{L}_{\phi}(\sqrt{g}\odot U^{n+1}) - \mathbf{R}_{\phi}(U^n) = \left[\mathbf{L}_{\phi} - \left(I-e^{-\mathbf{L}_{\phi}\tau}\right)^{-1}\mathbf{L}_{\phi}\right]\left(\sqrt{g}\odot(U^{n+1} -  U^n)\right). \nonumber
    \end{align}
     Define a function $\eta_1(a)$ as $\eta_1(a): = a-\frac{a}{1-e^{-a}}$, $a\neq 0$, 
     and let $H_1 = \frac{\eta_1(\mathbf{L}_{\phi}\tau)}{\tau} = \mathbf{L}_{\phi} - \left(I-e^{-\mathbf{L}_{\phi}\tau}\right)^{-1}\mathbf{L}_{\phi}$. For any $a>0$, $\eta_1(a)<0$. Since $\mathbf{L}_{\phi}$ is symmetric and positive definite, $H_1$ is symmetric and negative definite. The energy stability for the ETD1 scheme is given as
     \begin{equation}
         E_{\phi,h}[U^{n+1}] - E_{\phi,h}[U^n]\leq \left\langle H_1(\sqrt{g}\odot(U^{n+1} - U^{n})), \sqrt{g}\odot(U^{n+1} - U^{n})\right\rangle_{h} \leq 0. \label{eqn:Energydiff_ETD1}
     \end{equation}

     Next, we move to the ETDRK2 scheme \eqref{eqn:ETDRK2} by applying the result for the ETD1 scheme. Using the same approach for the ETD1 scheme, we can find the difference in the energies between two successive time steps as 
    \begin{align}
        E_{\phi,h}[U^{n+1}] - E_{\phi,h}[U^{n}] 
         = \ & E_{\phi,h}[U^{n+1}] - E_{\phi,h}[\Tilde{U}^{n+1}] + E_{\phi,h}[\Tilde{U}^{n+1}] - E_{\phi,h}[U^{n}] \nonumber\\
         \leq \ & \underbrace{\left\langle \mathbf{L}_{\phi}(\sqrt{g}\odot U^{n+1}) - \mathbf{R}_{\phi}(\Tilde{U}^{n+1}), \sqrt{g}\odot (U^{n+1} - \Tilde{U}^{n+1})\right\rangle_{h}}_{\mathbf{V}} \nonumber\\
         & + \underbrace{\left\langle \mathbf{L}_{\phi}(\sqrt{g}\odot \Tilde{U}^{n+1}) - \mathbf{R}_{\phi}(U^n), \sqrt{g}\odot (\Tilde{U}^{n+1} - U^{n})\right\rangle_{h}}_{\mathbf{VI}}. \nonumber
    \end{align}
    Then, for the ETDRK2 scheme \eqref{eqn:ETDRK2}, we have 
    \begin{align}
        \sqrt{g}\odot U^{n+1} = \sqrt{g}\odot \Tilde{U}^{n+1}+\tau\frac{e^{-\mathbf{L}_{\phi}\tau}-1+\mathbf{L}_{\phi}\tau}{(\mathbf{L}_{\phi}\tau)^2}\left(\mathbf{R}_{\phi}(\Tilde{U}^{n+1})-\mathbf{R}_{\phi}(U^n)\right). \nonumber
    \end{align}
    It can be reformulated by
    \begin{align}
        \mathbf{R}_{\phi}(\Tilde{U}^{n+1}) = \mathbf{R}_{\phi}(U^n) + \tau \mathbf{L}_{\phi}^2\left(e^{-\mathbf{L}_{\phi}\tau}-1+\mathbf{L}_{\phi}\tau\right)^{-1}\left(\sqrt{g}\odot (U^{n+1} - \Tilde{U}^{n+1})\right). \nonumber
    \end{align}
    Then, $\mathbf{V}$ and $\mathbf{VI}$ can be rewritten by 
    \begin{align}
    \mathbf{V}
    ={}&
    \left\langle
    \begin{aligned}
        &\left[
        \mathbf{L}_{\phi}
        -
        \left(I-e^{-\mathbf{L}_{\phi}\tau}\right)^{-1}
        \mathbf{L}_{\phi}
        \right]
        \left(\sqrt{g}\odot(\widetilde{U}^{n+1}-U^n)\right),
        \\
        &\sqrt{g}\odot(U^{n+1}-\widetilde{U}^{n+1})
    \end{aligned}
    \right\rangle_h
    \nonumber\\
    &+
    \left\langle
    \begin{aligned}
        &\left[
        \mathbf{L}_{\phi}
        -
        \tau\mathbf{L}_{\phi}^2
        \left(
        e^{-\mathbf{L}_{\phi}\tau}
        -I
        +\mathbf{L}_{\phi}\tau
        \right)^{-1}
        \right]
        \left(\sqrt{g}\odot(U^{n+1}-\widetilde{U}^{n+1})\right),
        \\
        &\sqrt{g}\odot(U^{n+1}-\widetilde{U}^{n+1})
    \end{aligned}
    \right\rangle_h ,
    \nonumber\\[1mm]
    \mathbf{VI}
    ={}&
    \left\langle
    \begin{aligned}
        &\left[
        \mathbf{L}_{\phi}
        -
        \left(I-e^{-\mathbf{L}_{\phi}\tau}\right)^{-1}
        \mathbf{L}_{\phi}
        \right]
        \left(\sqrt{g}\odot(\widetilde{U}^{n+1}-U^n)\right),
        \\
        &\sqrt{g}\odot(\widetilde{U}^{n+1}-U^n)
    \end{aligned}
    \right\rangle_h .
    \nonumber
\end{align}
    Denote $H_2 = \mathbf{L}_{\phi} - \tau \mathbf{L}_{\phi}^2\left(e^{-\mathbf{L}_{\phi}\tau}-1+\mathbf{L}_{\phi}\tau\right)^{-1}$, then we have 
        \begin{align}
        & E_{\phi,h}[U^{n+1}] - E_{\phi,h}[U^{n}] \nonumber \\
         \leq \ & \frac{1}{2} \left\langle \sqrt{g}\odot(\Tilde{U}^{n+1} - U^{n}), H_1\left(\sqrt{g}\odot(\Tilde{U}^{n+1} -U^n)\right)\right\rangle_{h} \nonumber\\
        & + \frac{1}{2}\left\langle \sqrt{g}\odot(U^{n+1} - U^{n}), H_1\left(\sqrt{g}\odot(U^{n+1} -U^n)\right)\right\rangle_{h} \nonumber\\
        & + \left\langle \sqrt{g}\odot(U^{n+1} - \Tilde{U}^{n+1}), \left(H_2-\frac{1}{2}H_1\right)\left(\sqrt{g}\odot(U^{n+1} - \Tilde{U}^{n+1})\right)\right\rangle_{h}. \label{eqn:Energydiff_ETDRK2}
    \end{align}
    Define a function $\eta_2(a)$ as $\eta_2(a) := a - \frac{a^2}{e^{-a}-1+a}$, $a\neq0$,
    then $H_1 = \frac{\eta_1(\mathbf{L}_{\phi}\tau)}{\tau}$ and $H_2 = \frac{\eta_2(\mathbf{L}_{\phi}\tau)}{\tau}$. Since $H_1$ is symmetric and negative definite, we have 
    \begin{align}
        \left\langle \sqrt{g}\odot(\Tilde{U}^{n+1} - U^{n}), H_1\left(\sqrt{g}\odot(\Tilde{U}^{n+1} -U^n)\right)\right\rangle_{h} \leq 0, \nonumber\\
        \left\langle \sqrt{g}\odot(U^{n+1} - U^{n}), H_1\left(\sqrt{g}\odot(U^{n+1} -U^n)\right)\right\rangle_{h} \leq0. \nonumber
    \end{align}
    Define a new function $\eta(a)$ as
    \begin{align}
          \eta(a) = \eta_2(a) - \frac{1}{2}\eta_{1}(a) = \frac{a(e^{-a}(3+a-e^{-a})-2)}{2(e^{-a}-1+a)(1-e^{-a})}. \nonumber
    \end{align}
    For any $a>0$, we always have $\eta(a)\leq 0$. Thus $H_2 - \frac{1}{2}H_1$ is symmetric and negative definite since $\mathbf{L}_{\phi}$ is symmetric and positive definite. Thus, we have
    \begin{align}
        & \left\langle \sqrt{g}\odot(U^{n+1} - \Tilde{U}^{n+1}), \left(H_2-\frac{1}{2}H_1\right)\left(\sqrt{g}\odot(U^{n+1} - \Tilde{U}^{n+1})\right)\right\rangle_{h} \leq 0. \nonumber
    \end{align}
    Therefore, we conclude that 
    \begin{align}
        E_{\phi,h}[U^{n+1}] - E_{\phi,h}[U^{n}] \leq 0, \nonumber
    \end{align}
    which establishes the unconditional energy stability for the ETDRK2 scheme.
\end{proof}

\begin{remark}
The proof of Theorem \ref{thm:Energy} follows the general strategy used for ETD schemes in earlier work \cite{du2019maximum,wang2025maximum,fu2022energy}, but requires a special treatment of the membrane geometry encoded by the localization function $g$. This treatment is based on the operator splitting method given in \eqref{eqn:OK_FD_operatorsplit}. Using this method, the discrete energy difference can be written in the inner product that appears in \eqref{eqn:Energydiff_ETD1}, \eqref{eqn:Energydiff_ETDRK2}, where the key term shares the common factor $\sqrt{g}\odot(U^{n+1}-U^n)$. In this setting, the negative definiteness of matrices $H_1$ and $H_2 - \frac{1}{2}H_1$ leads to the negativity of the energy difference, which yields discrete energy decay for both ETD1 and ETDRK2 schemes. 
\end{remark}

\section{Efficient ETD Schemes for the Coupled Phase-field Model}\label{Sec:ETDforfull}

Section~\ref{Sec:ETDforMOK} developed the ETD framework for the membrane-associated OK model on a fixed phase-field geometry and established its main analytical properties, including the discrete MBP Theorem \ref{thm:MBP} and energy stability Theorem \ref{thm:Energy}. These results clarify how the localization function $g(\phi)$ affects the operator structure and how it can be incorporated into a stable ETD formulation, and show that the ETD methods are particularly well suited for studying protein segregation on diffuse membranes. Since the membrane-associated OK dynamics are a key component of the phase-field model of multicomponent membranes \eqref{eqn:model_force}–\eqref{eqn:model_OK}, this ETD framework provides both the mathematical basis and the numerical foundation for the coupled system.

We now turn to the coupled system in this section, where the phase-field cell $\phi$ and protein distribution $u$ evolve simultaneously. In this setting, the membrane $g(\phi)$ is no longer fixed, so the operator splitting method \eqref{eqn:model_OK_half} used in Section~\ref{Sec:ETDforMOK} becomes less suitable, both in formulation and in implementation. In particular, the exponential integrator for the membrane-associated OK equation now involves a linear operator that depends explicitly on $g(\phi)$, and therefore must be updated at every time step; moreover, this operator cannot be treated by the FFT-based fast algorithms used in \cite{du2019maximum,wang2025maximum,ju2015fast}, which makes it the dominant computational cost. For this reason, we construct new ETD schemes that are efficient within an FFT framework by adopting a spectral spatial discretization, together with the operator-splitting stabilization in \cite{wang2016efficient} for the force-balance phase-field equation \eqref{eqn:model_force} and a separate stabilization for the membrane-associated OK dynamics \eqref{eqn:model_OK}. Because of the complexity of the coupled system, a rigorous numerical analysis for these FFT-based ETD schemes remains open. Instead, this section focuses on developing efficient and reliable algorithms for long-time simulations of the full multicomponent membrane model \eqref{eqn:model_force}–\eqref{eqn:model_OK} in three dimensions.

\subsection{Linear Splittings for the Coupled System}
We adopt a similar linear splitting scheme in stabilized numerical methods for the phase field elastic bending energy model as in \cite{wang2016efficient} and rewrite the equation \eqref{eqn:model_force} with stabilization constants $A_1, A_2>0$ as 
\begin{align}
    \frac{\partial\phi}{\partial t} = \ & \left[-\frac{\kappa}{\mu}\left(\Delta^2\phi - \frac{A_1}{\epsilon_{\phi}^2}\Delta\phi\right)+\left(\frac{\lambda_{\mathrm{surf}}}{\mu} + \frac{\kappa A_2}{\mu\epsilon_{\phi}^2}\right)\left(\Delta\phi-\frac{A_1}{\epsilon_{\phi}^2}\phi\right)\right] \nonumber\\
    & + \left[ \begin{aligned} &\frac{\kappa}{\mu\epsilon_{\phi}^2} \left[ \Delta\bigl(W'(\phi)-A_1\phi\bigr) + \bigl(W''(\phi)-A_2\bigr) \left( \Delta\phi - \frac{A_1}{\epsilon_{\phi}^2}\phi \right) \right] \\ &\quad - \left( \frac{\kappa}{\mu\epsilon_{\phi}^4}W''(\phi) + \frac{\lambda_{\mathrm{surf}}}{\mu\epsilon_{\phi}^2} \right) \bigl(W'(\phi)-A_1\phi\bigr) \end{aligned} \right] \nonumber\\
    &  + \frac{\lambda_{\mathrm{line}}}{\mu}\left(\epsilon_{u} \Delta_{\mathrm{S}}u - \frac{1}{\epsilon_{u}}g(\phi)W'(u)\right)|\nabla\phi| - \frac{M_{\mathrm{area}}}{\mu}\left(\int_{\Omega}\phi \ dx - A_0\right)|\nabla\phi| \nonumber \\ 
    &  + \frac{\alpha}{\mu} (u + u_0)|\nabla\phi|\left(\epsilon_{\phi}\Delta\phi-\frac{1}{\epsilon_{\phi}}W'(\phi)\right) = -\mathcal{L}_{\phi}(\phi) + \mathcal{R}_{1}(\phi,u). \nonumber
\end{align}
Here, we take the linear term 
\begin{align}
    \mathcal{L}_{\phi}(\phi) = \left(\frac{\kappa}{\mu}\Delta^2 - \frac{\kappa (A_1 + A_2) + \lambda_{\mathrm{surf}}\epsilon_{\phi}^2}{\mu\epsilon_{\phi}^2}\Delta + \frac{\kappa A_1A_2 + \lambda_{\mathrm{surf}}A_1\epsilon_{\phi}^2}{\mu\epsilon_{\phi}^4}\right)\phi, \label{eqn:linear_force}
\end{align}
and the all the rest (nonlinear) terms $\mathcal{R}_{1}(\phi,u)$.
For the equation \eqref{eqn:model_OK}, to apply the spectral collocation method for the linear term, the stabilized form becomes
\begin{align}
         \frac{\partial(gu)}{\partial t} = \ & \left[\epsilon_{u} B_1 \Delta(gu)- B_2gu\right] \nonumber\\
         & + \epsilon_{u}\left( \Delta_{\mathrm{S}}u - B_1\Delta(gu) \right) + B_2gu - \frac{1}{\epsilon_{u}} gW'(u) + \gamma g\Delta_{\mathrm{S}}^{-1}\Big(g(\phi)(u-\bar{u})\Big) \nonumber \\  & - M g \left(\int_{\Omega}g(\phi)\left(u-\bar{u}\right)\ \text{d}x \right) - \nabla\cdot(gu\mathbf{v}) = -\mathcal{L}_{u}(gu) + \mathcal{R}_{2}(\phi,u), \nonumber
\end{align}
where $B_1, B_2>0$ are stabilization constants. Here, the linear term is
\begin{align}
   \mathcal{L}_{u}(gu) =  \left( B_2 - \epsilon_{u} B_1 \Delta \right)gu, \label{eqn:linear_OK}
\end{align}
and the rest (nonlinear) terms are $\mathcal{R}_{2}(\phi,u)$.
Therefore, the system \eqref{eqn:model_force}-\eqref{eqn:model_OK} can be reformulated as
\begin{align}
     & \frac{\partial\phi}{\partial t} + \mathcal{L}_{\phi}(\phi) =   \mathcal{R}_{1}(\phi,u), \label{eqn:force_spec}\\
    & \frac{\partial (gu)}{\partial t} + \mathcal{L}_{u}(gu) =  \mathcal{R}_{2}(\phi,u). \label{eqn:OK_spec}
\end{align}

\begin{remark}
The linear splitting \eqref{eqn:linear_force} for the force-balance phase-field equation \eqref{eqn:force_spec} extracts the dominant fourth- and second-order terms into the linear operator, which improves the temporal stability of the ETD scheme. For the membrane-associated OK dynamics \eqref{eqn:OK_spec}, the splitting \eqref{eqn:linear_OK} is designed so that the linear operator involves only the standard Laplacian and stabilization constants, and therefore is independent of the evolving phase-field cell $\phi$. Consequently, the geometry-dependent operators in \eqref{eqn:model_OK} are confined to the nonlinear part. With these choices, the linear parts of \eqref{eqn:force_spec}–\eqref{eqn:OK_spec} can be treated efficiently by FFT within the ETD framework. In this way, both splittings produce linear operators that are simple, stable, and well-suited for fast spectral implementation in long-time simulations.
\end{remark}

\subsection{Spectral Spatial Discretization}

We discretize the spatial linear operators by using the spectral collocation approximation. Consider a similar domain $\Omega$ in three dimensions as in Section \ref{Sec:ETDforMOK} with the uniform mesh $N_x$, $N_y$, $N_z$ all positive even integers and mesh sizes $h_x = \frac{2X}{N_x}$, $h_y = \frac{2Y}{N_y}$, $h_z = \frac{2Z}{N_z}$.
Denote the sets of approximate solutions $\Phi = (\phi_{ijk})_{0:N_x-1,0:N_y-1,0:N_z-1}$, $\mathbf{GU} = (g_{ijk}u_{ijk})_{0:N_x-1,0:N_y-1,0:N_z-1}$, and the Laplacian operator in the spectral space is corresponding to the following spectrum
\begin{align}
    \lambda_{ijk} = -\lambda_{x}^2(i) -\lambda_{y}^2(j) - \lambda_{z}^2(k), \nonumber
\end{align}
where 
\begin{align}
    \lambda_{x}(i) = \left\{
    \begin{array}{ll}
      \frac{\pi}{X}i,   & 0\leq i\leq N_x/2, \\
       \frac{\pi}{X}(N_x-i),  &  N_x/2+1\leq i\leq N_x-1,
    \end{array}\right. \nonumber\\
        \lambda_{y}(j) = \left\{
    \begin{array}{ll}
      \frac{\pi}{Y}j,   & 0\leq j\leq N_y/2, \\
       \frac{\pi}{Y}(N_y-j),  &  N_y/2+1\leq j\leq N_y-1.
    \end{array}\right. \nonumber \\
    \lambda_{z}(k) = \left\{
    \begin{array}{ll}
      \frac{\pi}{Z}k,   & 0\leq k\leq N_z/2, \\
       \frac{\pi}{Z}(N_z-k),  &  N_z/2+1\leq k\leq N_z-1.
    \end{array}\right. \nonumber
\end{align}

To approximate the nonlinear term $\mathcal{R}_{1}(\phi,u)$, we apply the spectral collocation method, which yields the discrete approximation $R_{1,h}(\phi,u) \approx \mathcal{R}_{1}(\phi,u)$. 

For the nonlinear term $\mathcal{R}_{2}(\phi,u)$, we use the second-order finite difference discretization introduced in Section~\ref{Sec:ETDforMOK} on the same uniform mesh and denote the approximation by $R_{2,h}(\phi,u) \approx \mathcal{R}_{2}(\phi,u)$. This choice is motivated by the surface diffusion operator $\Delta_{\mathrm{S}}$, whose implementation is difficult by using the same spectral collocation method. In particular, the nonlocal surface term $\Delta_{\mathrm{S}}^{-1}\bigl(g(\phi)(u-\bar{u})\bigr)$ is approximated by a narrow-band strategy. Since the localization function $g(\phi)$ is essentially supported only near the diffuse interface and is nearly zero away from the membrane represented by the phase-field function $\phi$, solving this nonlocal surface problem on the whole computational domain would introduce unnecessary computational cost. This narrow-band strategy restricts the nonlocal term to a thin band surrounding the membrane, defined by $ \Omega_{\mathrm{band}}:= \{x\in\Omega:\ |g(\phi(x))|\geq\delta\}$, where $0<\delta\ll 1$ is a small threshold. The nonlocal term is then assembled and solved only on the grid nodes in $\Omega_{\mathrm{band}}$. This narrow-band treatment reduces both the number of unknowns and the computational cost for evaluating the nonlocal term.

Next, we take the FFT on both sides 
of the equations, and by taking $\hat{\Phi} = \mathrm{FFT}(\Phi) = (\hat{\phi}_{ijk})_{0:N_x-1,0:N_y-1,0:N_z-1}$, $\hat{\mathbf{U}} = \mathrm{FFT}(\mathbf{U}) = (\hat{u}_{ijk})_{0:N_x-1,0:N_y-1,0:N_z-1}$ and $\widehat{\mathbf{GU}} =  \mathrm{FFT}(\mathbf{GU}) = (\widehat{g_{ijk}u_{ijk}})_{0:N_x-1,0:N_y-1,0:N_z-1}$, we then get 
\begin{align}
   & \hat{\Phi}_t = - \mathrm{L}_{\phi}\cdot\hat{\Phi} + \mathrm{R}_{1}(\hat{\Phi},\hat{\mathbf{U}}), \nonumber\\
   & \widehat{\mathbf{GU}}_t = - \mathrm{L}_{u}\cdot\widehat{\mathbf{GU}} + \mathrm{R}_{2}(\hat{\Phi},\hat{\mathbf{U}}). \nonumber
\end{align}
Here, the transformed linear terms are given as 
\begin{align}
    &-\mathrm{L}_{\phi}\cdot\hat{\Phi} = -(l^{\phi}_{ijk}\hat{\phi}_{ijk})_{0:N_x-1,0:N_y-1,0:N_z-1}, \nonumber\\
    & l^{\phi}_{ijk} = \frac{\kappa}{\mu}\lambda_{ijk}^2 - \frac{\kappa (A_1 + A_2) + \lambda_{\mathrm{surf}}\epsilon_{\phi}^2}{\mu\epsilon_{\phi}^2}\lambda_{ijk}+\frac{\kappa A_1A_2 + \lambda_{\mathrm{surf}}A_1\epsilon_{\phi}^2}{\mu\epsilon_{\phi}^4}, \nonumber \\
    &-\mathrm{L}_{u}\cdot\widehat{\mathbf{GU}} = -(l^{u}_{ijk}\widehat{g_{ijk}u_{ijk}})_{0:N_x-1,0:N_y-1,0:N_z-1}, \nonumber\\
    & l^{u}_{ijk} = B_2 - \epsilon_uB_1\lambda_{ijk}, \nonumber
\end{align}
and the transformed nonlinear terms are 
\begin{align}
    &\mathrm{R}_{1}(\hat{\Phi},\hat{\mathbf{U}}) = \mathrm{FFT}(R_{1,h}(\mathrm{iFFT}(\hat{\Phi}),\mathrm{iFFT}(\hat{\mathbf{U}}))), \nonumber \\
    &\mathrm{R}_{2}(\hat{\Phi},\hat{\mathbf{U}}) = \mathrm{FFT}(R_{2,h}(\mathrm{iFFT}(\hat{\Phi}),\mathrm{iFFT}(\hat{\mathbf{U}}))). \nonumber
\end{align}
Therefore, for $0\leq i\leq N_x-1$, $0\leq j\leq N_y-1$, $0\leq k \leq N_z-1 $, two equations can be written point-wisely as
\begin{align}
    & (\hat{\phi}_{ijk})_t = -(l^{\phi}_{ijk}\hat{\phi}_{ijk}) + [\mathrm{R}_{1}(\hat{\Phi},\hat{\mathbf{U}})]_{ijk}, \nonumber\\
    & (\widehat{g_{ijk}u_{ijk}})_t = -(l^{u}_{ijk}\widehat{g_{ijk}u_{ijk}}) + [\mathrm{R}_{2}(\hat{\Phi},\hat{\mathbf{U}})]_{ijk}. \nonumber
\end{align}

\subsection{Exponential Time Differencing Schemes}

Given the discrete function $g_{ijk} = 18(\phi_{ijk}^2 - \phi_{ijk})^2$, we then decouple the system and apply the ETD methods for the system. Denote the approximation solutions at $t = t_n$ step as $\hat{\Phi}^{n} = (\hat{\phi}^{n}_{ijk})$ and $\hat{\mathbf{U}}^n = (\hat{u}^n_{ijk})$, the ETD1 scheme is given by 
\begin{align}
    \hat{\Phi}^{n+1} = \ & \mathrm{ETD1}(\hat{\Phi}^{n},\hat{\mathbf{U}}^n,\tau_{n+1}, \mathrm{L}_{\phi},\mathrm{R}_{1}): \nonumber\\ 
    \hat{\phi}_{ijk}^{n+1} = \ & \varphi_{0}({\tau_{n+1}l_{ijk}^{\phi}})\hat{\phi}_{ijk}^{n} + \tau_{n+1} \varphi_1(\tau_{n+1}l^{\phi}_{ijk})[\mathrm{R}_{1}(\hat{\Phi}^{n},\hat{\mathbf{U}}^{n})]_{ijk}; \nonumber\\
    \widehat{\mathbf{GU}}^{n+1} = \ & \mathrm{ETD1}(\hat{\Phi}^{n+1},\hat{\mathbf{U}}^n,\tau_{n+1}, \mathrm{L}_{u},\mathrm{R}_{2}): \nonumber \\
    \widehat{g_{ijk}u_{ijk}}^{n+1} = \ & \varphi_{0}({\tau_{n+1}l^{u}_{ijk}})\mathrm{FFT}(g(\phi^{n+1})_{ijk}u^n_{ijk}) \nonumber\\
   & + \tau_{n+1} \varphi_1(\tau_{n+1}l^{u}_{ijk})[\mathrm{R}_{2}(\hat{\Phi}^{n+1},\hat{\mathbf{U}}^{n})]_{ijk}, \nonumber 
\end{align}
and the ETDRK2 scheme is given by
\begin{align}
     \hat{\Phi}^{n+1} = \ & \mathrm{ETDRK2}(\hat{\Phi}^{n},\hat{\mathbf{U}}^n,\tau_{n+1}, \mathrm{L}_{\phi},\mathrm{R}_{1}): \nonumber \\
     \Tilde{\Phi}^{n+1} =\ & (\Tilde{\phi}_{ijk}^{n+1}) =\mathrm{ETD1}(\hat{\Phi}^{n},\hat{\mathbf{U}}^n,\tau_{n+1}, \mathrm{L}_{\phi},\mathrm{R}_{1}), \nonumber \\
     \hat{\phi}_{ijk}^{n+1} = \ & \Tilde{\phi}_{ijk}^{n+1} + \tau_{n+1} \varphi_2(\tau_{n+1}l^{\phi}_{ijk})[\mathrm{R}_{1}(\Tilde{\Phi}^{n+1},\hat{\mathbf{U}}^{n}) - \mathrm{R}_{1}(\hat{\Phi}^{n},\hat{\mathbf{U}}^{n})]_{ijk}; \nonumber\\
     \widehat{\mathbf{GU}}^{n+1} = \ & \mathrm{ETDRK2}(\hat{\Phi}^{n+1},\hat{\mathbf{U}}^n,\tau_{n+1}, \mathrm{L}_{u},\mathrm{R}_{2}): \nonumber \\
     \widetilde{\mathbf{GU}}^{n+1} = \ &(\widetilde{g_{ijk}u_{ijk}}^{n+1}) = \mathrm{ETD1}(\hat{\Phi}^{n+1},\hat{\mathbf{U}}^n,\tau_{n+1}, \mathrm{L}_{u},\mathrm{R}_{2}), \nonumber\\
     \widehat{g_{ijk}u_{ijk}}^{n+1} = \ &\widetilde{g_{ijk}u_{ijk}}^{n+1} \nonumber \\
    & + \tau_{n+1} \varphi_2(\tau_{n+1}l^{u}_{ijk})[\mathrm{R}_{2}(\hat{\Phi}^{n+1},\Tilde{\mathbf{U}}^{n+1}) - \mathrm{R}_{2}(\hat{\Phi}^{n+1},\hat{\mathbf{U}}^{n})]_{ijk}. \nonumber 
\end{align}
Then we summarize the stabilized ETD1 and ETDRK2 schemes for the system \ref{eqn:model_force}-\ref{eqn:model_OK} in Algorithm~\ref{alg:ETD1_RK2}:

\begin{algorithm}
\caption{ETD1/ETDRK2 schemes}
\label{alg:ETD1_RK2}
\begin{algorithmic}
\STATE{Input: the computational domain $\Omega\subset \mathbb{R}^{3}$, number of grid points $N_x$, $N_y$, $N_z$, spatial grid sizes $h_x$, $h_y$, $h_z$, initial conditions $\Phi^0$ and $\mathbf{U}^0$, end time $\mathrm{T}>0$, and time step size $\tau$. Take $\hat{\Phi}^0 = \mathrm{FFT}(\Phi^0)$, $\mathbf{G}^0 = 18((\Phi^0)^2 - \Phi^0)^2$, $\hat{\mathbf{U}}^0 = \mathrm{FFT}(\mathbf{U}^0)$, and $K = T/\tau$.}
\WHILE{$n=0,1,\cdots,K-1$}
\STATE{$\hat{\Phi}^{n+1} = \mathrm{ETD1}/\mathrm{ETDRK2}(\hat{\Phi}^{n},\hat{\mathbf{U}}^n,\tau, \mathrm{L}_{\phi},\mathrm{R}_{1})$;}
\STATE{$\widehat{\mathbf{GU}}^{n+1} = \mathrm{ETD1}/\mathrm{ETDRK2}(\hat{\Phi}^{n+1},\hat{\mathbf{U}}^n,\tau, \mathrm{L}_{u},\mathrm{R}_{2})$;}
\STATE{$\Phi^{n+1} = \mathrm{iFFT}(\hat{\Phi}^{n+1})$;}
\STATE{$\mathbf{G}^{n+1} = 18((\Phi^{n+1})^2 - \Phi^{n+1})^2$;}
\STATE{$\mathbf{GU}^{n+1} = \mathrm{iFFT}(\widehat{\mathbf{GU}}^{n+1})$;}
\STATE{$\mathbf{U}^{n+1} = \frac{\mathbf{GU}^{n+1}}{\mathbf{G}^{n+1}}$.}
\ENDWHILE
\RETURN $\mathbf{\Phi}^{K},\mathbf{U}^{K}$
\end{algorithmic}
\end{algorithm}

Note that in the recovery step for $\mathbf{U}^{n+1}$, a narrow band $\Omega_{\mathrm{band}}^{n+1} = \{x\in\Omega_h:\ \mathbf{G}^{n+1}>10^{-3}\}$ is used to avoid division by very small values of the localization function. Specifically, for grid points inside this narrow band, $\mathbf{U}^{n+1}$ is computed explicitly by $\mathbf{U}^{n+1} = \frac{\mathbf{GU}^{n+1}}{\mathbf{G}^{n+1}}$. For grid points outside this narrow band, where $\mathbf{G}^{n+1}$ is negligible, we simply assign $\mathbf{U}^{n+1} = \mathbf{GU}^{n+1}$ to avoid numerical instability caused by division by a small number.

\section{Numerical Experiments}\label{Sec:NumExp}

In this section, we present some numerical examples to validate the proposed ETD schemes in Section \ref{Sec:ETDforMOK} and Section \ref{Sec:ETDforfull}. The experiments were implemented on a desktop with a 2.7 GHz Intel CPU and 32 GB of memory. 

\subsection{Convergence Rates for OK Model on Fixed Membranes}\label{subsec:rate}

In this section, we examine the convergence rates of the ETD1 \eqref{eqn:ETD1} and ETDRK2 \eqref{eqn:ETDRK2} schemes for the OK model on fixed membranes. We set the domain $\Omega = [-1,1]^2\subset\mathbb{R}^2$ with $N_x = N_y = N = 2^8$, and the mesh size is $h_x = h_y = h = \frac{2}{N} = \frac{1}{128}$. The fixed cell is given by 
\begin{align}
    \phi = 0.5+0.5\tanh{\left(\frac{3(r_0-r)}{\epsilon_{\phi}}\right)}, \quad r_0 = 0.4, \quad \epsilon_{\phi} = 10h, \nonumber
\end{align}
with $r$ the distance from each grid point to the center of the domain. The corresponding membrane is localized by $g(\phi) = 18(\phi^2-\phi)^2$. The initial condition for $u$ is then given as a localized protein-rich segment on the membrane. More precisely, on the membrane region identified by $g(\phi)$, we randomly select a connected subset $\Gamma_0 \subset \Omega_{\mathrm{mem}}= \{x\in\Omega:|g|\geq10^{-3}\}$ such that 
\begin{align}
    \frac{\int_{\Gamma_0}g(\phi)\mathrm{d}x}{\int_{\Omega_{\mathrm{mem}}}g(\phi)\mathrm{d}x} = \bar{u}+0.05 = 0.35, \nonumber
\end{align}
with the fixed protein fraction $\bar{u} = 0.3$. Then we set 
\begin{align}
    U^0 = \left\{
    \begin{array}{cc}
      1 , & x\in \Gamma_0,\\
      0 , & x\notin \Gamma_0,
    \end{array}
    \right. \nonumber
\end{align}
which represents an initial protein patch occupying $35\%$ of the membrane area. The parameters are chosen as $\gamma = 100$, $B_2 = 2000$, $M=600$ for the ETD1 scheme \eqref{eqn:ETD1}, and $M = 200$ for the ETDRK2 scheme \eqref{eqn:ETDRK2}. All simulations are carried out up to the final time $T = 0.02$, and the benchmark solution is computed using a small time step size $\tau = 1\times 10^{-6}$. For the ETD1 scheme \eqref{eqn:ETD1}, we then test the numerical errors and convergence rates in Table \ref{table:ETD1} for three interface values of $\epsilon_{u} = 5h, 10h,15h$, with the time step size successively halved from $10^{-4}$ to $10^{-4}/2^4$. Similarly, Table \ref{table:ETDRK2} presents the corresponding results (errors and convergence rates) for the ETDRK2 scheme \eqref{eqn:ETDRK2}. The observed rates of convergence for small time step size $\tau$ closely match the theoretical predictions.

\begin{table}[t]
\begin{center}
\begin{tabular}{ |c c c c c c c|  }
\hline
     & \multicolumn{2}{c}{$\epsilon_{u} = 5h$} & \multicolumn{2}{c}{$\epsilon_{u} = 10h$} & \multicolumn{2}{c|}{$\epsilon_{u} = 15h$}\\
 \hline
 $\tau$ & Error \quad & Rate \quad & Error \quad & Rate \quad & Error \quad & Rate\quad \\
\hline
$1\mathrm{e}-4/2^0$ & 2.477e-1 \quad &  -    \quad      
                    & 3.230e-1 \quad &  -    \quad 
                    & 3.374e-1 \quad &  -    \quad \\
$1\mathrm{e}-4/2^1$ & 1.832e-1 \quad & 0.435 \quad 
                    & 1.903e-1 \quad & 0.763 \quad  
                    & 1.778e-1 \quad & 0.924 \quad \\
$1\mathrm{e}-4/2^2$ & 1.204e-1 \quad & 0.606 \quad 
                    & 1.084e-1 \quad & 0.811 \quad 
                    & 9.207e-2 \quad & 0.950 \quad \\
$1\mathrm{e}-4/2^3$ & 6.870e-2 \quad & 0.810 \quad 
                    & 5.713e-2 \quad & 0.924 \quad  
                    & 4.625e-2 \quad & 0.993 \quad \\
$1\mathrm{e}-4/2^4$ & 3.450e-2 \quad & 0.994 \quad 
                    & 2.740e-2 \quad & 1.060 \quad  
                    & 2.165e-2 \quad & 1.095 \quad \\
\hline
\end{tabular}
\end{center}
\caption{The discrete $L^{\infty}$ errors and the corresponding rate of convergence at time $T=0.02$ generated by the ETD1 scheme \eqref{eqn:ETD1} with different interface values of $\epsilon_{u}$. The parameters given in examples are $\bar{u} = 0.3$, $M=600$, $\gamma = 100$, and $B_2 = 2000$.}
\label{table:ETD1}
\end{table}

\begin{table}[t]
\begin{center}
\begin{tabular}{ |c c c c c c c|  }
\hline
     & \multicolumn{2}{c}{$\epsilon_{u} = 5h$} & \multicolumn{2}{c}{$\epsilon_{u} = 10h$} & \multicolumn{2}{c|}{$\epsilon_{u} = 15h$}\\
 \hline
 $\tau$ & Error \quad & Rate \quad & Error \quad & Rate \quad & Error \quad & Rate\quad \\
\hline
$1\mathrm{e}-4/2^0$ & 7.890e-2 \quad &  -    \quad      
                    & 7.473e-2 \quad &  -    \quad 
                    & 7.438e-2 \quad &  -    \quad \\
$1\mathrm{e}-4/2^1$ & 4.521e-2 \quad & 0.803 \quad 
                    & 3.147e-2 \quad & 1.247 \quad  
                    & 2.502e-2 \quad & 1.572 \quad \\
$1\mathrm{e}-4/2^2$ & 2.059e-2 \quad & 1.134 \quad 
                    & 1.118e-2 \quad & 1.493 \quad 
                    & 7.705e-3 \quad & 1.699 \quad \\
$1\mathrm{e}-4/2^3$ & 7.321e-3 \quad & 1.492 \quad 
                    & 3.383e-3 \quad & 1.725 \quad  
                    & 2.163e-3 \quad & 1.833 \quad \\
$1\mathrm{e}-4/2^4$ & 2.178e-3 \quad & 1.749 \quad 
                    & 9.179e-4 \quad & 1.882 \quad  
                    & 5.658e-4 \quad & 1.935 \quad \\
\hline
\end{tabular}
\end{center}
\caption{The discrete $L^{\infty}$ errors and the corresponding rate of convergence at time $T=0.02$ generated by the ETDRK2 scheme \eqref{eqn:ETDRK2} with different interface values of $\epsilon_{u}$. The parameters given in examples are $\bar{u} = 0.3$, $M=200$, $\gamma = 100$, $B_2 = 2000$.}
\label{table:ETDRK2}
\end{table}

\subsection{2D Coarsening Dynamics of ETD1 and ETDRK2 Schemes for OK Model on Fixed Membranes}

In this section, we verify the discrete MBP and energy stability of the ETD1 \eqref{eqn:ETD1} and ETDRK2 \eqref{eqn:ETDRK2} schemes for the OK model on fixed membranes in two dimensions, and we show that these properties persist for complex surface geometries generated by the localization function $g(\phi)$. All simulations use the same computational domain $\Omega$ and grid number $N$ as in Section \ref{subsec:rate}. Unless stated otherwise, the final time is $T = 1000$, with parameters $\bar{u} = 0.5$, $\epsilon_{u} = 5h$, $\tau = 10^{-3}$, and the stabilizer $B_2 = 2000$ (sufficiently large to fulfill the condition \eqref{eqn:cond_MBP} and \eqref{eqn:cond_ES}).

Figures \ref{fig:ETD1_cir8} and \ref{fig:ETDRK2_circ9} correspond to a fixed circular phase-field cell defined by 
\begin{align}
    \phi = 0.5+0.5\tanh{\left(\frac{3(r_0-r)}{\epsilon_{\phi}}\right)}, \quad r_0 = 0.4, \quad \epsilon_{\phi} = 10h, \nonumber
\end{align}
with $r$ the distance from each grid point to the center of the domain and the membrane indicator $g(\phi)$. 
The initial condition is randomly generated by a coarse grid on the band region $\Omega_{\mathrm{band}}$. Figure \ref{fig:ETD1_cir8} shows the coarsening dynamics produced by ETD1 \eqref{eqn:ETD1} with repulsion strength $\gamma = 2000$: starting from a random initial ($t=0$) the system quickly phase separates into eight protein-rich domains (red patches) of different sizes (t=10), and then relaxes to eight regions that are nearly equal in size and spacing along the membrane at later times ($t=500, 1000$). The four insets display snapshots at $t=0,10,500,1000$. The blue curve (left axis) shows the discrete energy $E_{\phi,h}[U^n]$ \eqref{eqn:Energy_dis}, which decreases monotonically in time, while the orange curve (right axis) plots $\|2U^n-1\|_{\infty}$, with the bound $\|2U^n-1\|_{\infty}\leq 1$ is equivalent to $0\leq U^n\leq1$. This example confirms the discrete MBP and energy stability for the ETD1 scheme \eqref{eqn:ETD1}. The result in Figure \ref{fig:ETDRK2_circ9} generated by the ETDRK2 scheme \eqref{eqn:ETDRK2}, with $\gamma = 3000$, exhibits similar coarsening dynamics on the same circular membrane, but with the final state of nine equally spaced protein-rich domains. This indicates that a larger repulsive strength $\gamma$ promotes the phase separation of proteins. Again, the energy decay and the uniform bound on $U^n$ demonstrate the discrete MBP and energy stability of the ETDRK2 scheme \eqref{eqn:ETDRK2}.

We then test the ETDRK2 scheme \eqref{eqn:ETDRK2} on fixed membranes with more complex geometries: an elliptical cell (Figure \ref{fig:ETDRK2_ell9}) and a nearly circular cell with seven protrusions (Figure \ref{fig:ETDRK2_sin7}). In Figure \ref{fig:ETDRK2_ell9}, the elliptical cell is given by 
\begin{align}
    \phi = 0.5+0.5\tanh{\left(\frac{3(r_0-r)}{\epsilon_{\phi}}\right)}, \quad r_0 = 0.4, \quad r = \sqrt{x^2+\sqrt{3}y^2}, \nonumber
\end{align}
with the same membrane function $g(\phi)$. Starting from a random initial and using $\gamma=5000$, the system again enters similar coarsening dynamics and evolves to a pattern with nine equally-separated protein-rich domains along the elliptical membrane, while the energy and $\|2U^{n}-1\|_{\infty}$ behave similarly as 
in Figure \ref{fig:ETD1_cir8} and \ref{fig:ETDRK2_circ9}. Figure \ref{fig:ETDRK2_sin7} shows an example on a “wavy” round cell with seven protrusions as
\begin{align}
    \phi = 0.5+0.5\tanh{\left(\frac{3(r_0-r)}{\epsilon_{\phi}}\right)}, \quad r_0 = 0.4(1+0.1\sin{7\theta}), \quad \theta = \arctan{(y,x)}, \nonumber
\end{align}
again with random initial and $\gamma = 7000$. The dynamics produce fourteen protein-rich domains that arrange uniformly along the corrugated membrane, and the plots confirm monotone energy decay and the discrete MBP.

The experiments in Figures \ref{fig:ETD1_cir8}–\ref{fig:ETDRK2_sin7} show that, for a broad class of membrane geometries generated by the localization function $g(\phi)$, the ETD1 and ETDRK2 schemes preserve the discrete MBP and energy stability in practice, which match the theoretical results in Section \ref{Sec:ETDforMOK}.

\begin{figure}[t]
    \begin{center}
    \includegraphics[width=0.9\textwidth]{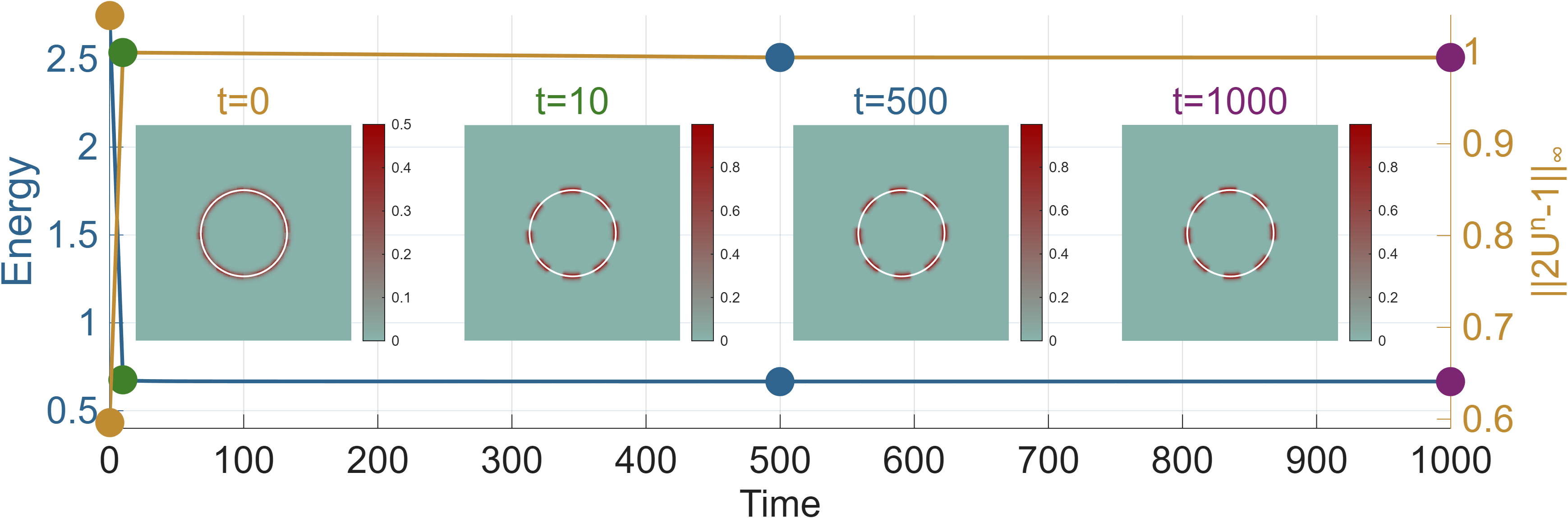}
    \end{center}
    \vspace{-3mm}
    \caption{2D coarsening dynamics process using ETD1 scheme \eqref{eqn:ETD1}, parameters $T=1000$, $\bar{u} = 0.5$, $\gamma = 2000$, $\tau = 10^{-3}$, and $B_2 = 2000$.}
    \label{fig:ETD1_cir8}
\end{figure}

\begin{figure}[t]
    \begin{center}
    \includegraphics[width=0.9\textwidth]{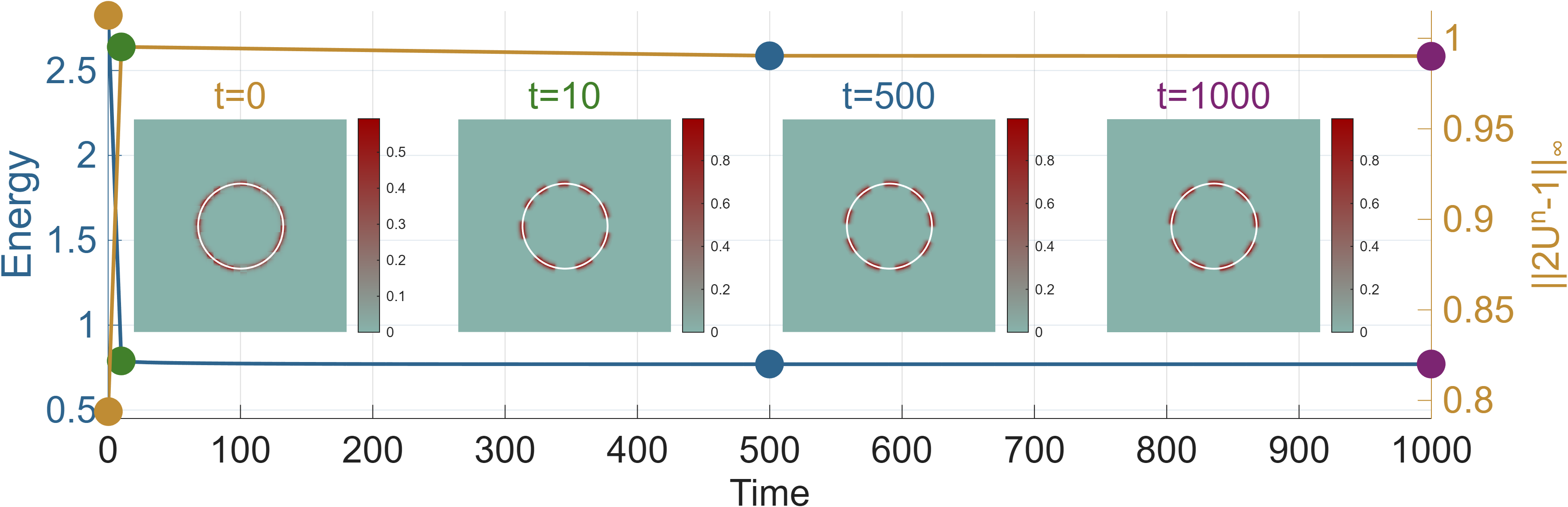}
    \end{center}
    \vspace{-3mm}
    \caption{2D coarsening dynamics process using ETDRK2 scheme \eqref{eqn:ETDRK2}, parameters $T=1000$, $\bar{u} = 0.5$, $\gamma = 3000$, $\tau = 10^{-3}$, and $B_2 = 2000$.}
    \label{fig:ETDRK2_circ9}
\end{figure}

\begin{figure}[t]
    \begin{center}
    \includegraphics[width=0.9\textwidth]{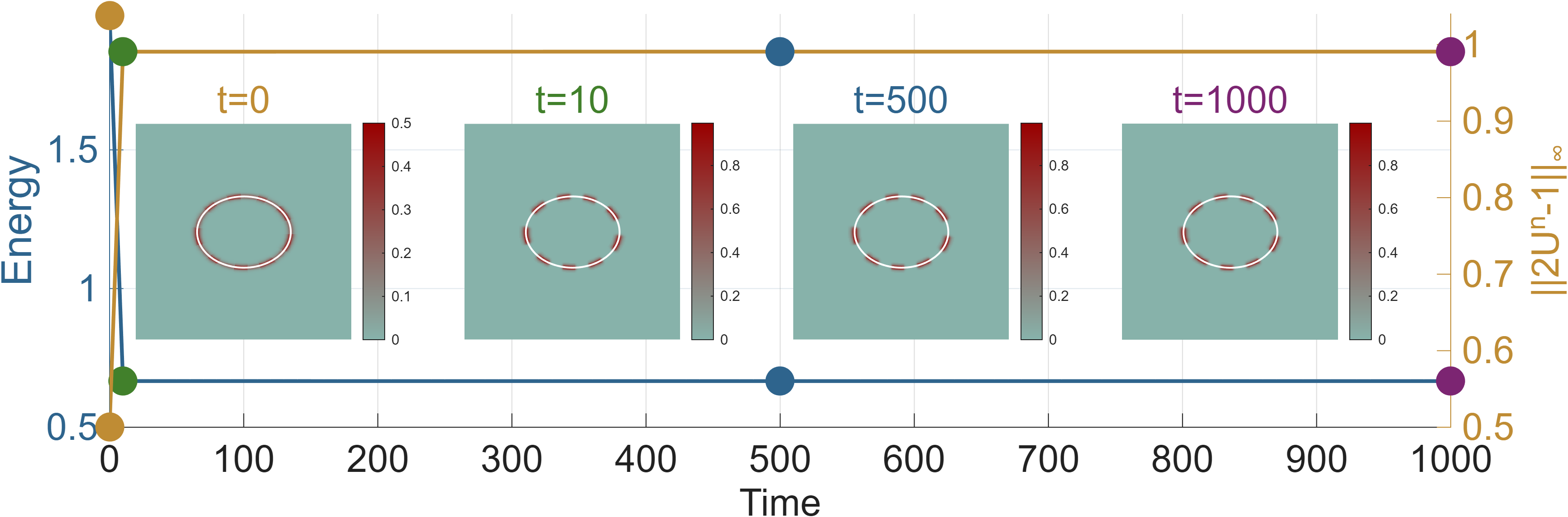}
    \end{center}
    \vspace{-3mm}
    \caption{2D coarsening dynamics process using ETDRK2 scheme \eqref{eqn:ETDRK2}, parameters $T=1000$, $\bar{u} = 0.5$, $\gamma = 5000$, $\tau = 10^{-3}$, and $B_2 = 2000$.}
    \label{fig:ETDRK2_ell9}
\end{figure}

\begin{figure}[t]
    \begin{center}
    \includegraphics[width=0.9\textwidth]{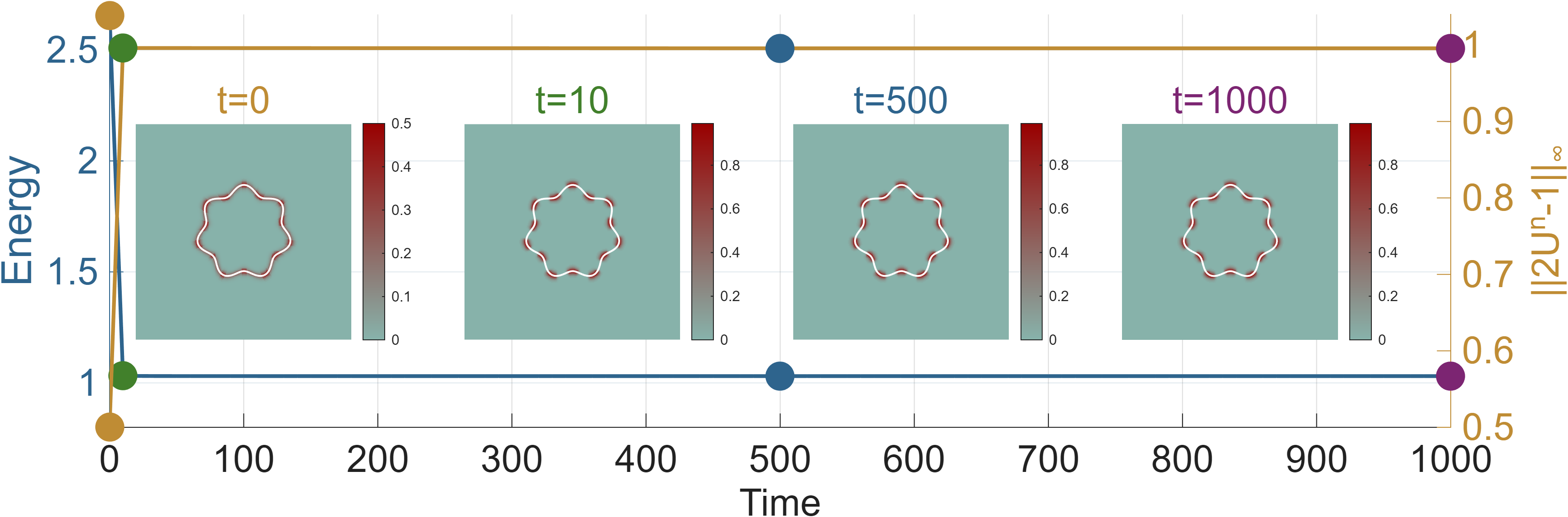}
    \end{center}
    \vspace{-3mm}
    \caption{2D coarsening dynamics process using ETDRK2 scheme \eqref{eqn:ETDRK2}, parameters $T=1000$, $\bar{u} = 0.5$, $\gamma = 7000$, $\tau = 10^{-3}$, and $B_2 = 2000$.}
    \label{fig:ETDRK2_sin7}
\end{figure}

\subsection{3D Dynamics for the Phase-field Model of Multicomponent Membranes}

In this section, we simulate the coupled system \eqref{eqn:model_force}-\eqref{eqn:model_OK} of multicomponent membranes in three dimensions by using Algorithm \ref{alg:ETD1_RK2}. The computational domain is chosen as $\Omega = [-1,1]^3\subset\mathbb{R}^3$ with a uniform grid $N = N_x = N_y = N_z = 2^7$, so that the mesh size is $h = h_x = h_y = h_z = \frac{2}{2^7} = \frac{1}{64}$. For the force-balance equation \eqref{eqn:force_spec}, we fix $\epsilon_{\phi} = 10h$, $\epsilon_{u} = 5h$, $\mu = 2$, $\lambda_{\mathrm{surf}} = 10$, $\lambda_{\mathrm{line}} = 30$, $\kappa = 1$, $M_{\mathrm{area}} = 100$, and the stabilizers $A_1 = 20$, $A_2=50$, and for the membrane-associated OK equation \eqref{eqn:model_OK}, we take $M=3000$ and the stabilizer $B_1 = 1$, $B_2 = 1000$. The time step size is fixed at $\tau = 10^{-3}$, and other parameters will be varied in the following experiments. 

Figure \ref{fig:ETDRK2_3d_gamma} shows the 3D coarsening dynamics of the proteins on the membrane and the associated membrane deformation. From left to right, we display snapshots at increasing times; from top to bottom, we increase the repulsive strength $\gamma$ to enhance protein segregation. The initial phase-field cell is 
\begin{align}
        \Phi^0 = 0.5+0.5\tanh{\left(\frac{3(r_0-r)}{\epsilon_{\phi}}\right)}, \quad r_0 = 0.4, \nonumber
\end{align}
where $r$ is the distance from each grid point to the center of the domain. The initial condition for $u$ is randomly generated on a coarse grid. On the top row of Figure \ref{fig:ETDRK2_3d_gamma}, we chose $\gamma = 15000$ and the biochemical strength $\alpha = 700$. Starting from a random initial of $u$ at $t=0$, the experiment first presents the protein segregation on the fixed membrane induced by $\Phi^0$: several protein-rich patches shown in red appear at $t=1$, and then evolve to nearly equal-sized and equal-distanced domains by $t=10$ and $t=100$. After the protein pattern has formed, the membrane begins to deform under the combined mechanical and biochemical forces, leading to shape changes at $t=101$ and $t=105$. On the bottom row, we increase $\gamma$ from $15000$ to $17000$. The overall dynamics are similar, but the stronger repulsion produces more protein-rich patches, confirming that larger $\gamma$ promotes microphase separation. 

In both cases in Figure \ref{fig:ETDRK2_3d_gamma}, the membrane develops outward protrusions (bumps) at the protein-rich regions. These bumps are not identical in height or shape. This is because, for a given choice of $\gamma$, the number of protein domains may not allow a perfectly uniform spacing on the closed surface. As a result, the distances between neighboring patches vary slightly, and the strength of their interactions is not identical. Protein-rich domains that are closer to their neighbors experience stronger repulsion than those in more isolated regions, which leads to small differences in local curvature, bump amplitude, and detailed shape. Hence, the protrusions in the last column of Figure \ref{fig:ETDRK2_3d_gamma} are not perfectly uniform across the membrane. Furthermore, this experiment shows a clear difference in time scales between protein segregation and membrane deformation. The protein needs a relatively long time (about $t = 100$) to reach a quasi-equilibrium pattern under the membrane-associated OK dynamics \eqref{eqn:model_OK}, while the subsequent membrane reshaping driven by the force-balance phase field equation \eqref{eqn:model_force} occurs on a shorter time scale (about $t = 5$) once the protein pattern is established. This suggests that, in this parameter regime, the OK dynamics on the evolving surface are the main computational cost, whereas the membrane deformation responds more quickly to the established protein forces.

Next, we test how the biochemical strength $\alpha$ affects cell morphology when the membrane carries multiple uniformly spaced protein-rich subdomains, as shown in Figure \ref{fig:ETDRK2_3d_12b}. For a fixed repulsive strength $\gamma = 4000$ and a random initial condition of $u$, the equilibrium state of the membrane-associated OK model \eqref{eqn:model_OK} on the surface induced by $\Phi^0$ contains $12$ protein-rich patches that are nearly uniformly distributed on the membrane and experience comparable interactions with their neighbors, which is displayed in the leftmost subfigure of Figure \ref{fig:ETDRK2_3d_12b}. Using this state as the initial condition, we then vary the biochemical strength $\alpha = 250, 300, 350, 500$, which generates the second to fifth subfigures. The simulations show that, under these conditions, the amplitudes and shapes of the bumps at the protein-rich subdomains remain almost identical, consistent with the uniform separation of the domains. At the same time, increasing $\alpha$ strengthens the protein-driven biochemical force and leads to stronger outward protrusions and larger membrane deformation. The resulting 3D morphologies in Figure \ref{fig:ETDRK2_3d_12b} closely resemble the earlier numerical results in \cite{wang2008modelling} and the experimental observations of multicomponent vesicles in \cite{baumgart2003imaging}.

\begin{figure}[t]
    \begin{center}
    \includegraphics[width=0.15\textwidth] {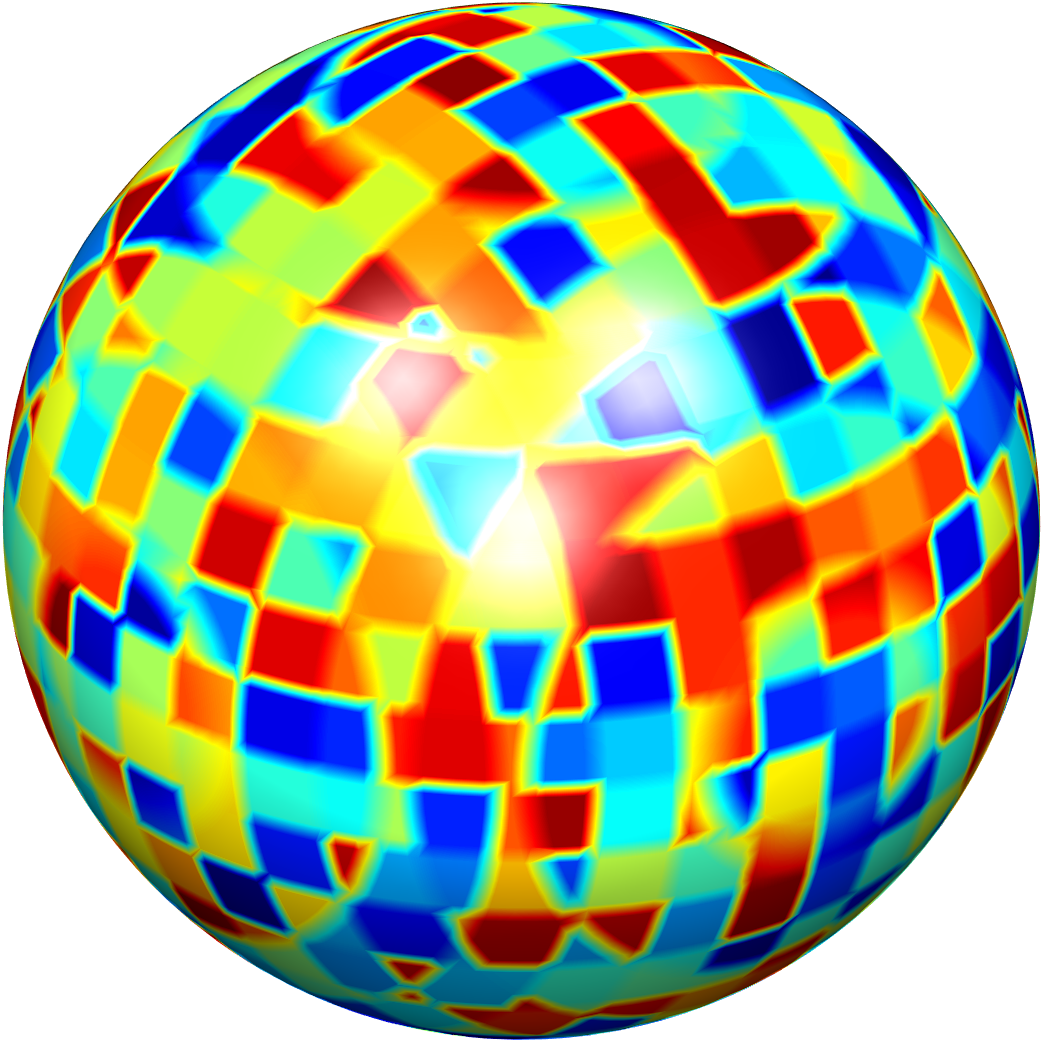} \
    \includegraphics[width=0.15\textwidth]{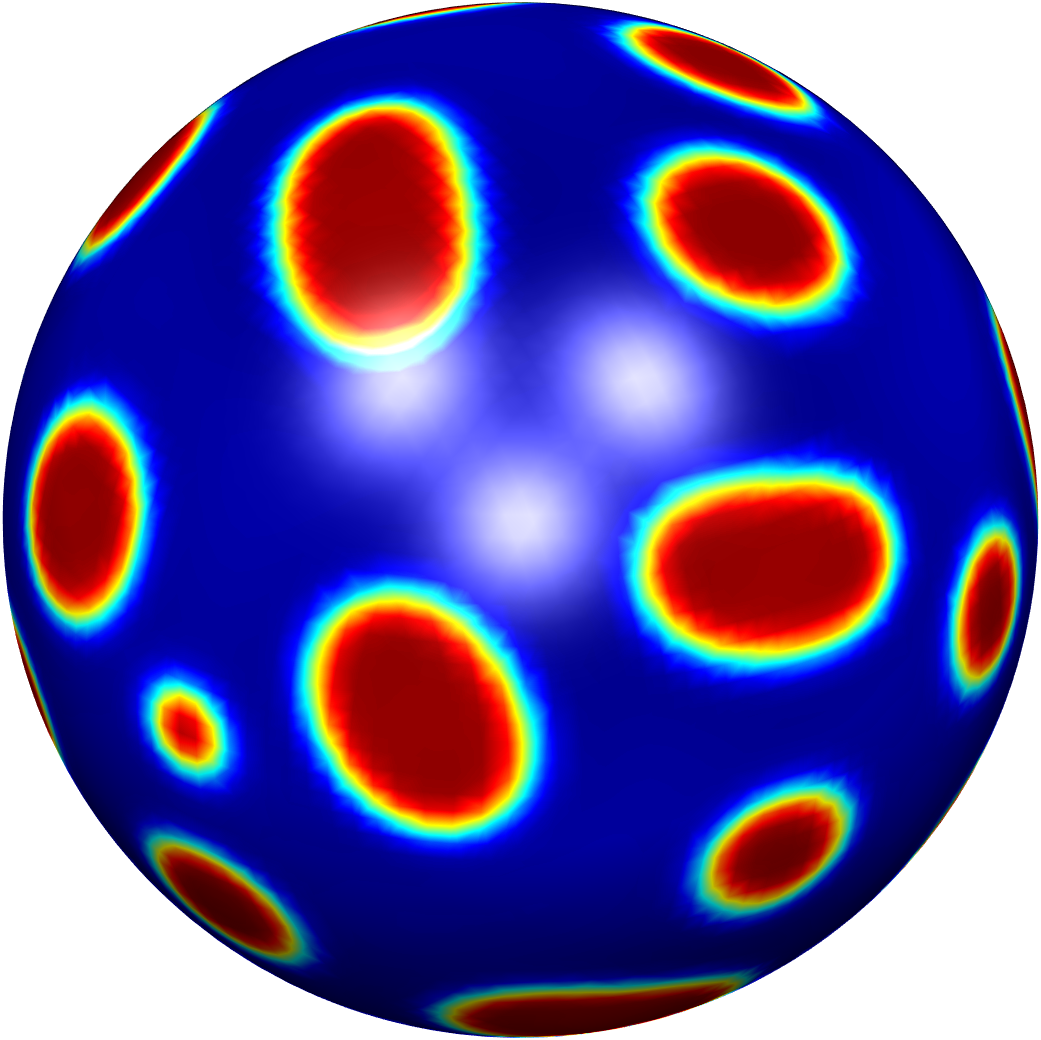} \
    \includegraphics[width=0.15\textwidth]{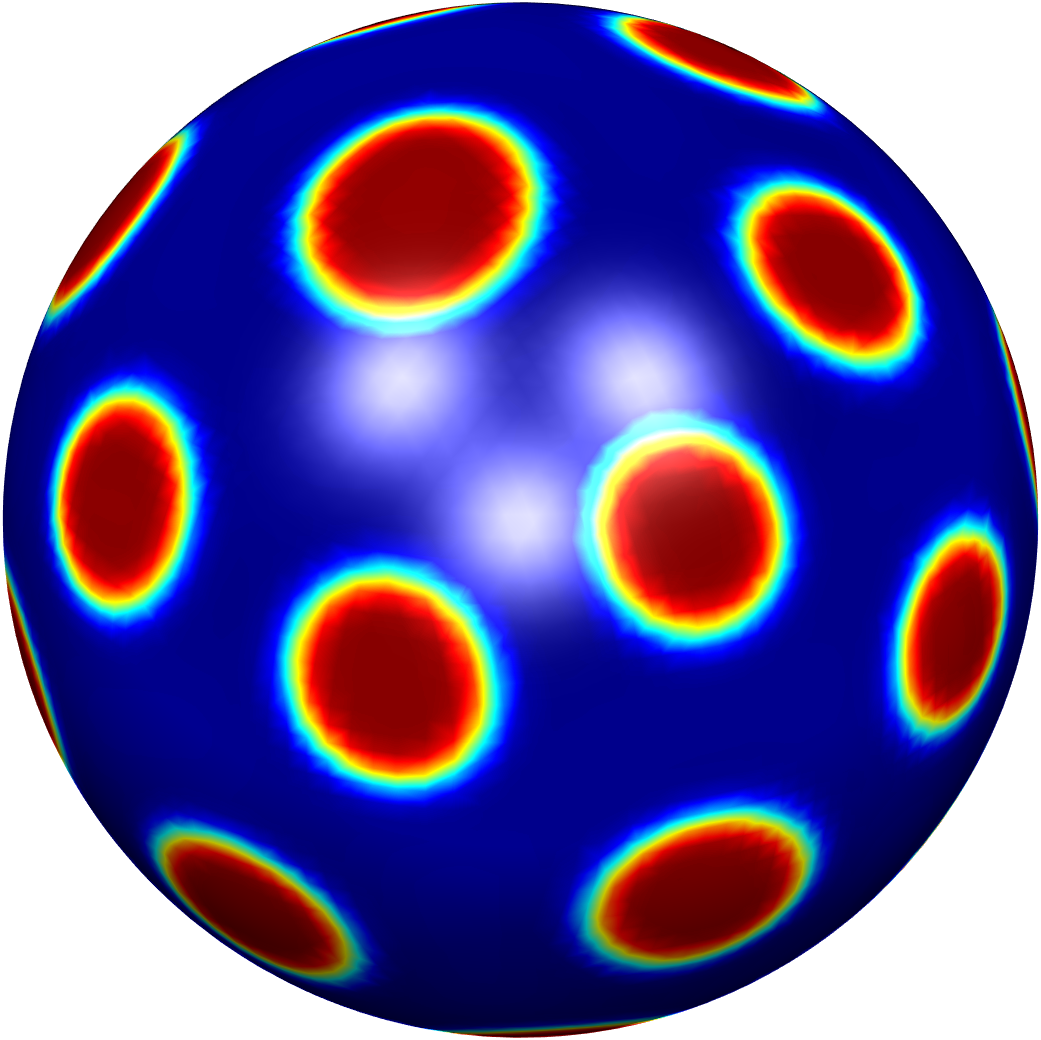} \
    \includegraphics[width=0.15\textwidth]{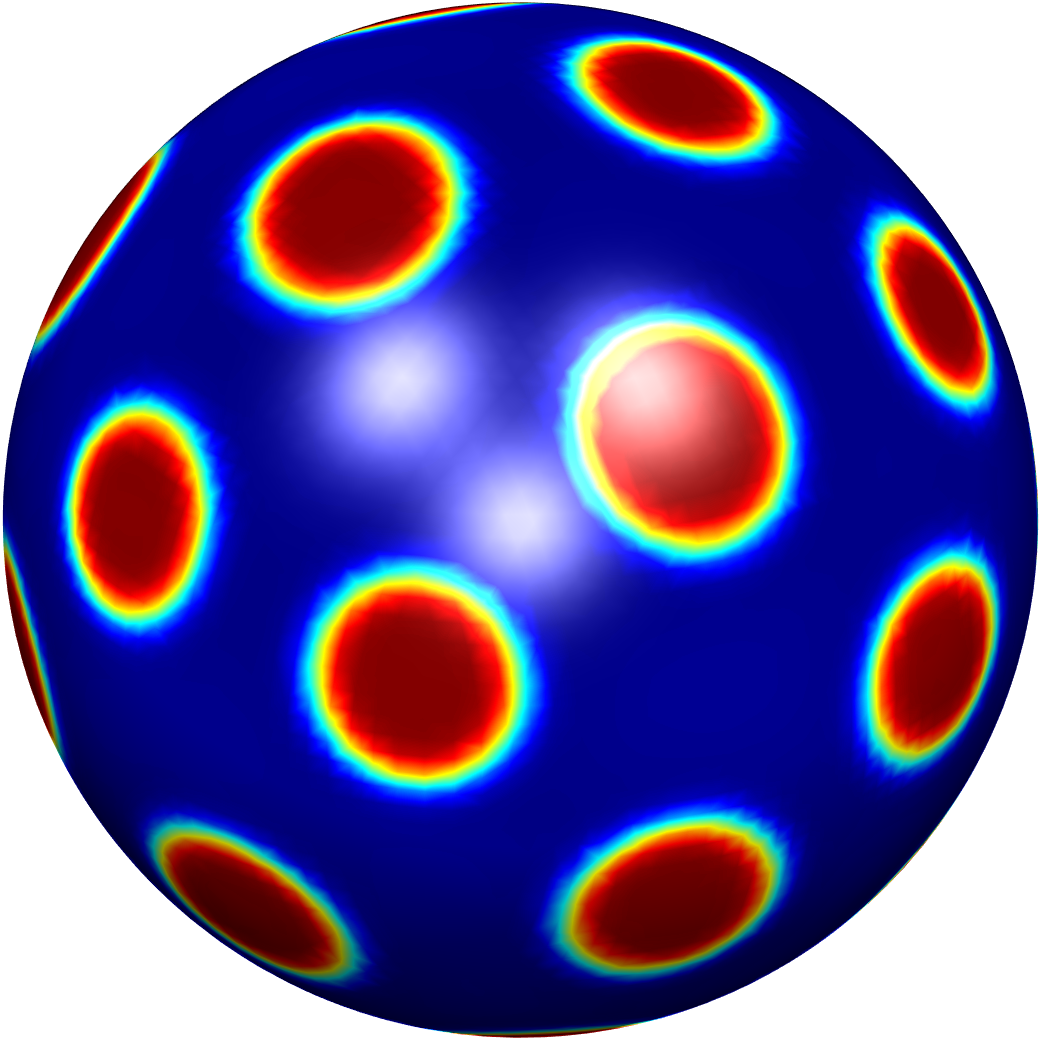} \
    \includegraphics[width=0.15\textwidth]{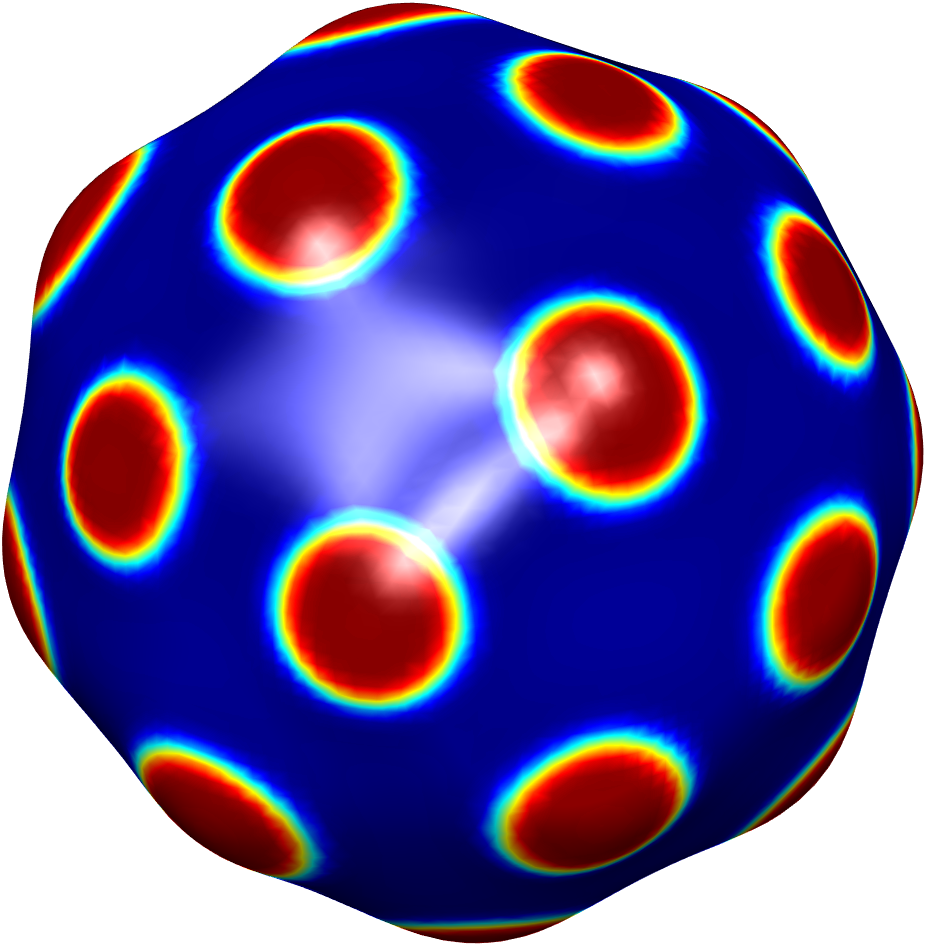} \
    \includegraphics[width=0.15\textwidth]{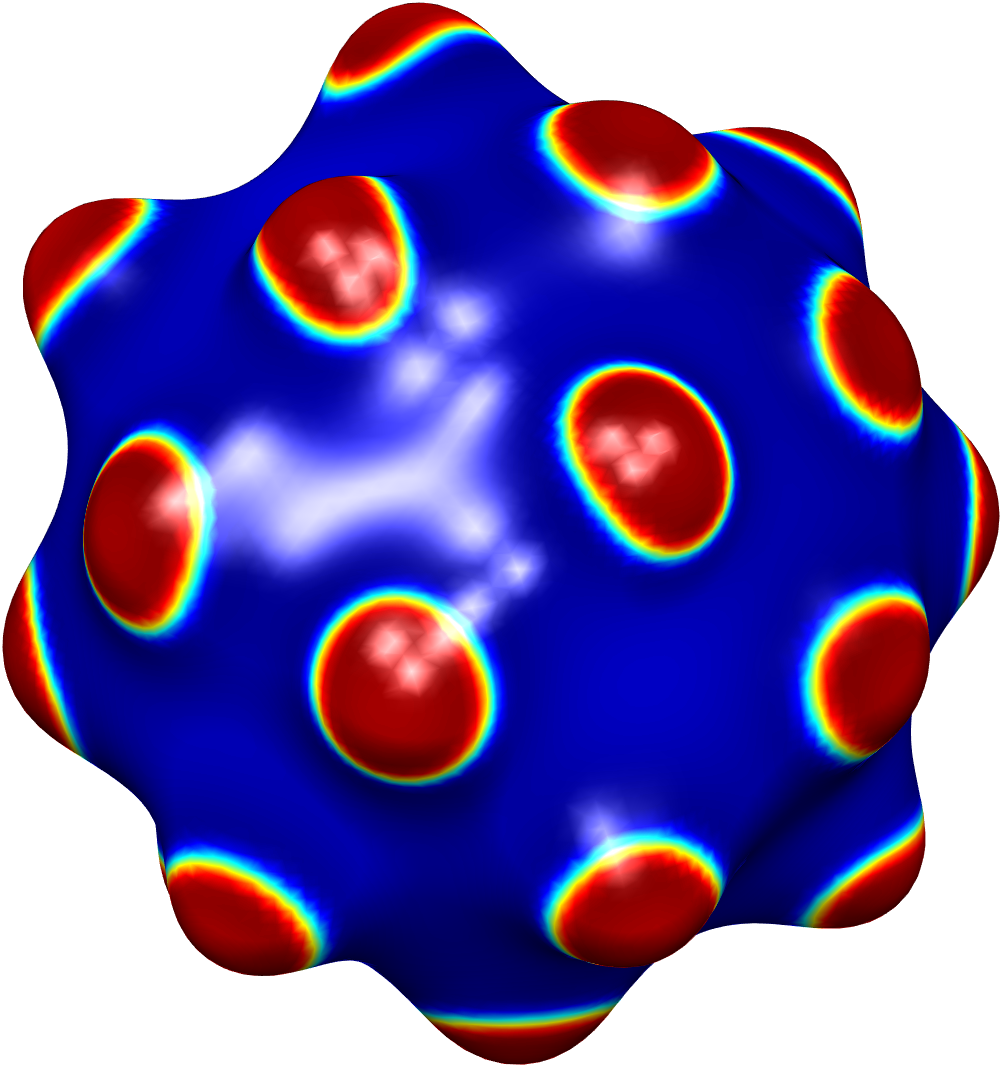} 
    \end{center}
    \begin{center}
    \includegraphics[width=0.15\textwidth] {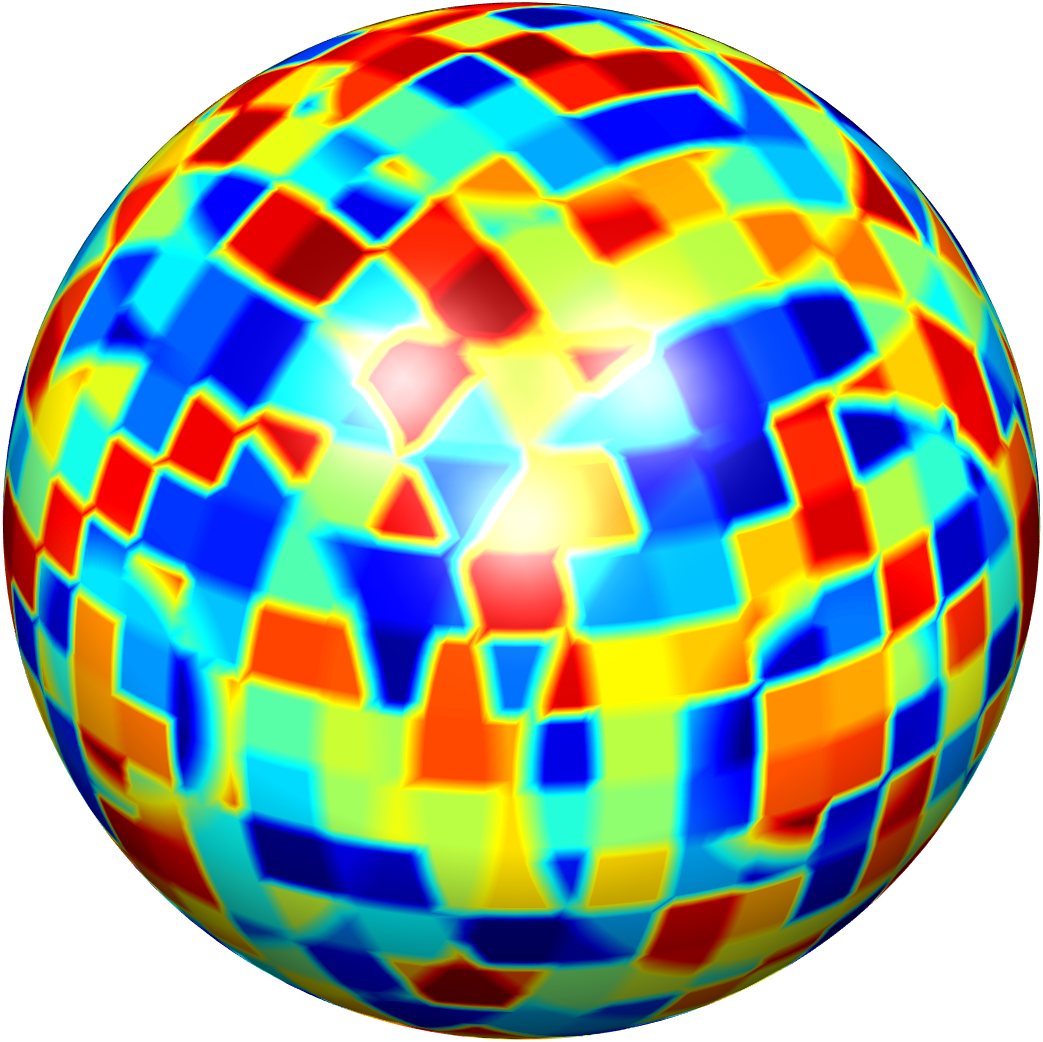} \
    \includegraphics[width=0.15\textwidth]{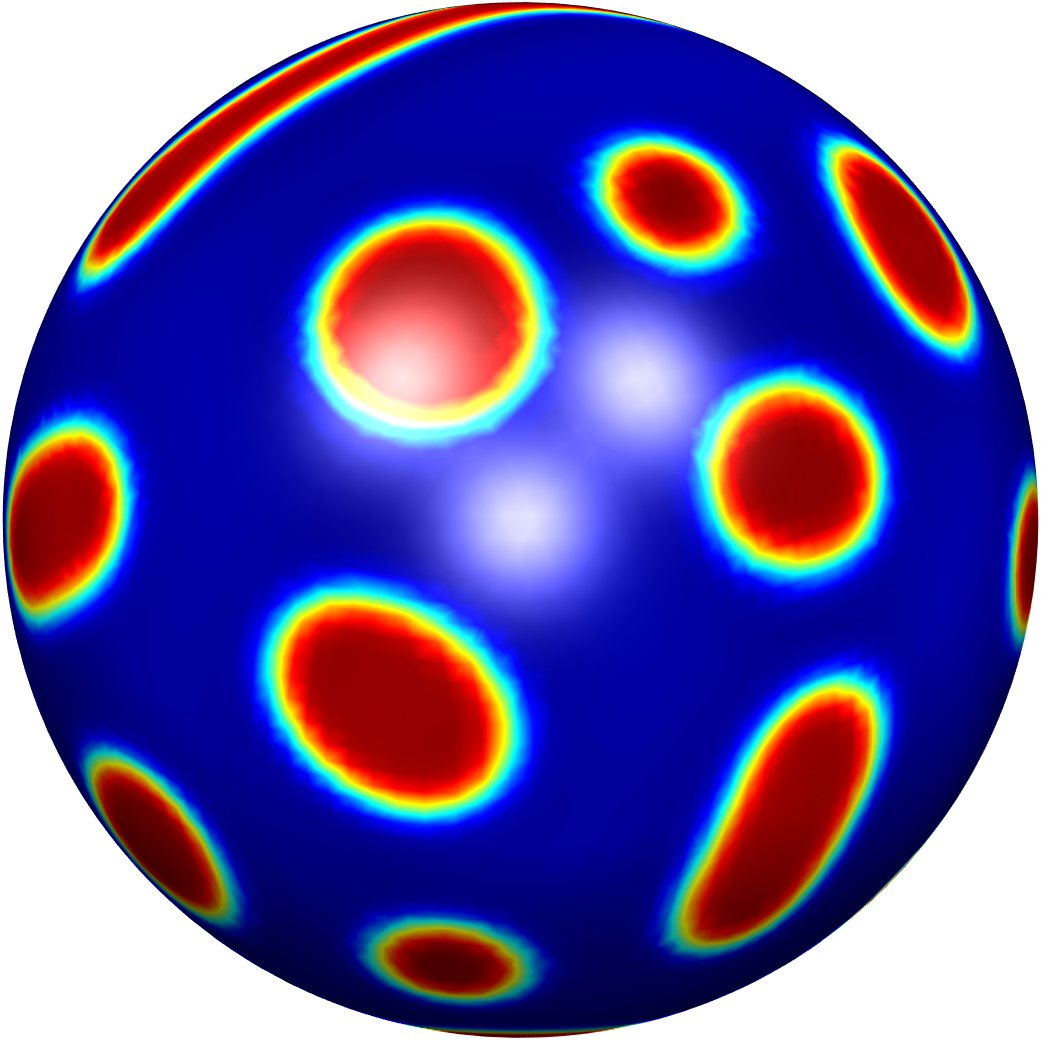} \
    \includegraphics[width=0.15\textwidth]{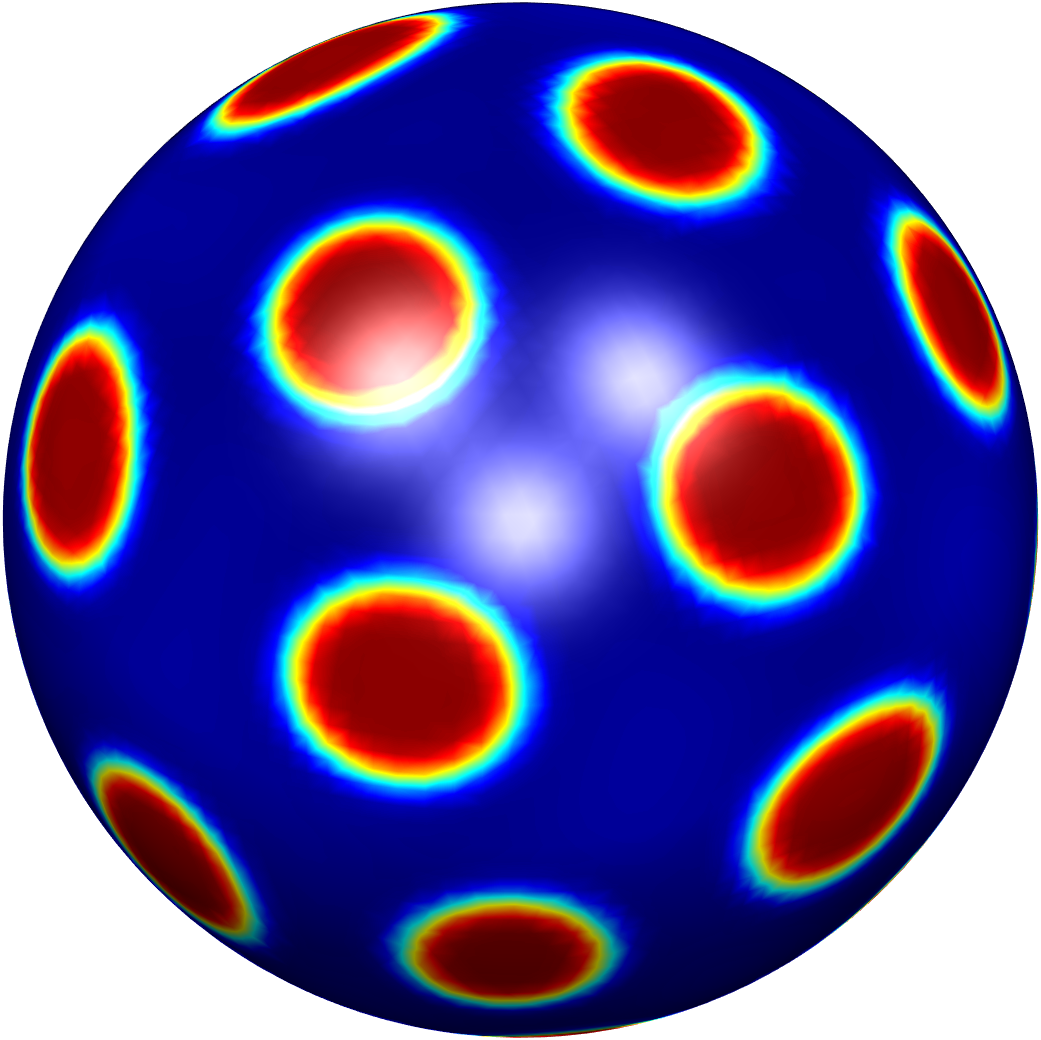} \
    \includegraphics[width=0.15\textwidth]{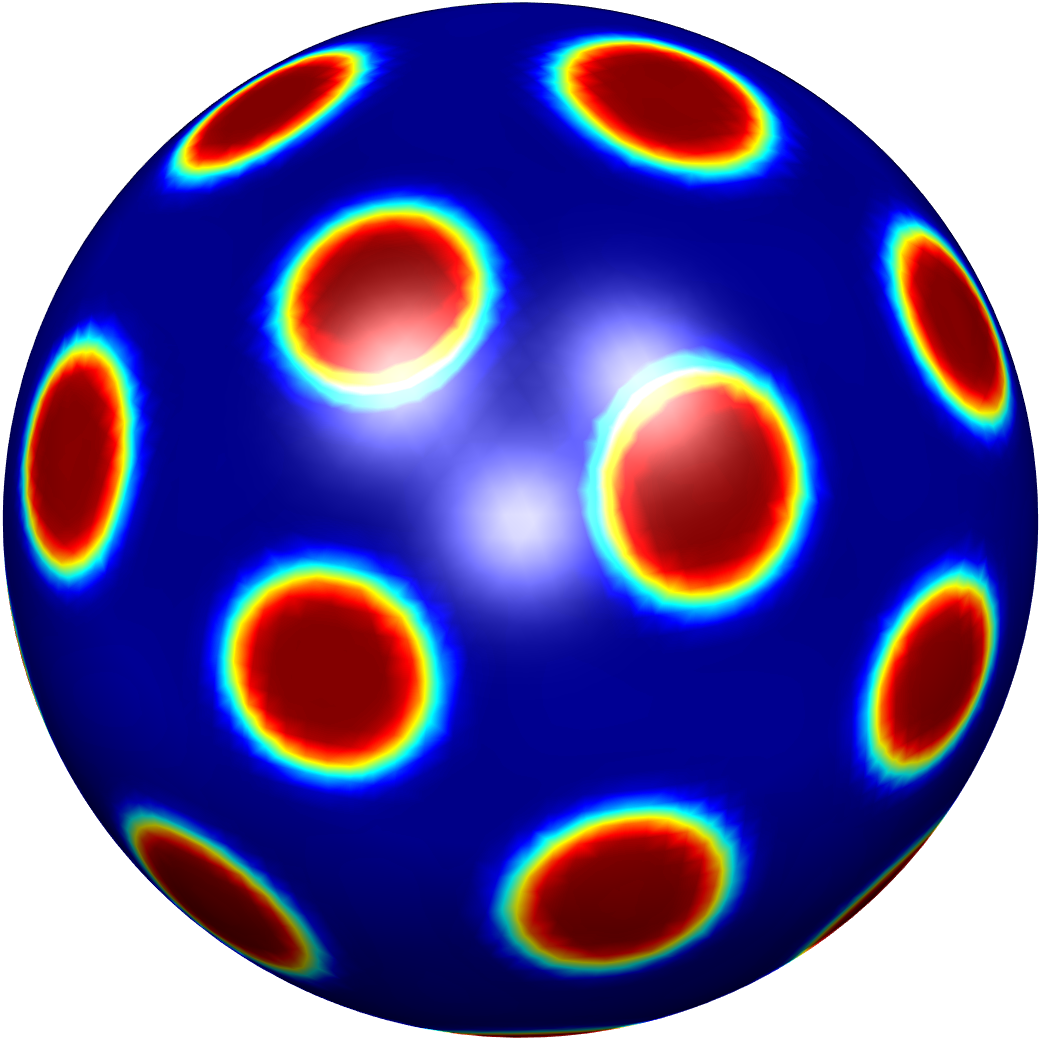} \
    \includegraphics[width=0.15\textwidth]{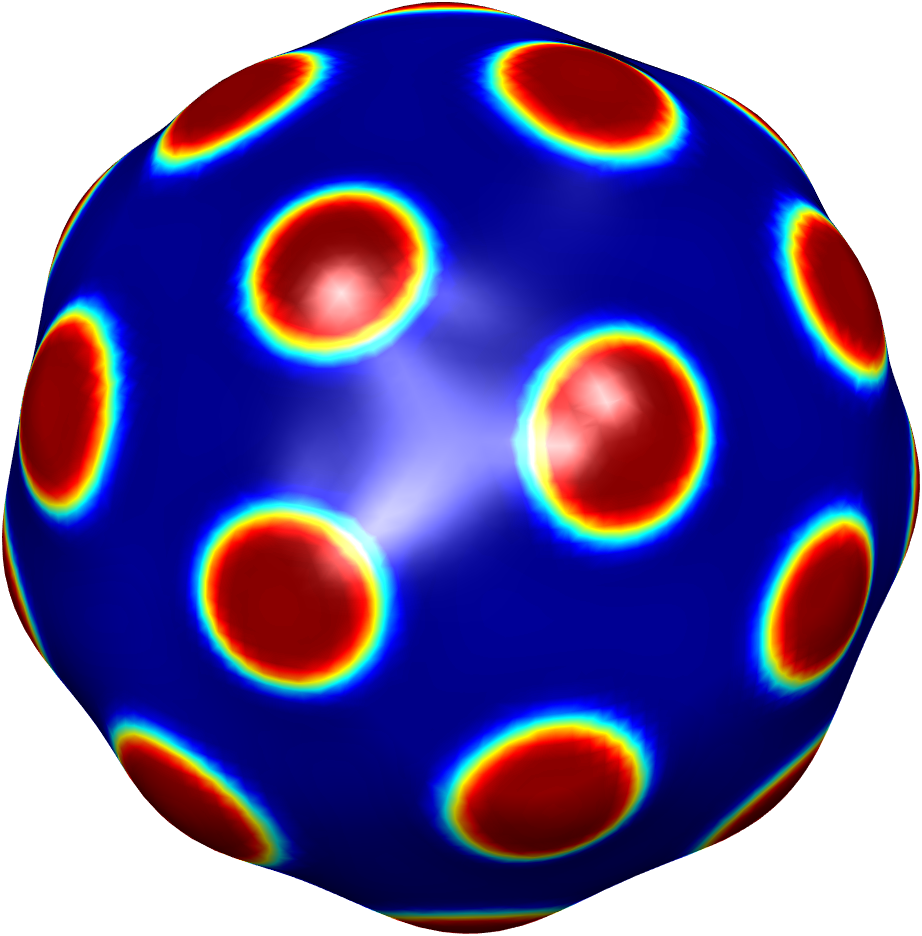} \
    \includegraphics[width=0.15\textwidth]{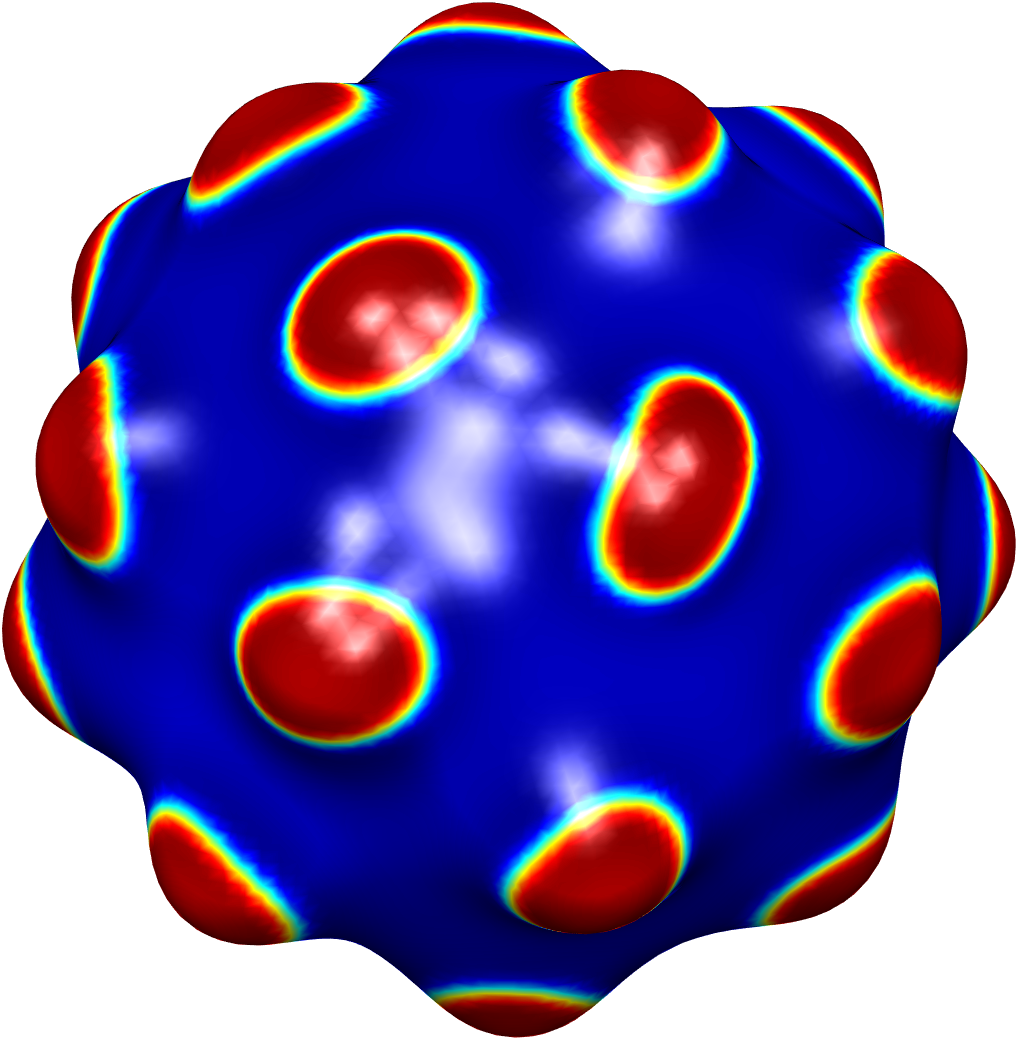}
    \end{center}
    \caption{3D coarsening dynamics process using Algorithm \ref{alg:ETD1_RK2}, parameters $\bar{u} = 0.3$, $\gamma = 15000$ (Top row), $\gamma = 17000$ (Bottom row), $\alpha = 700$, and time step size $\tau = 10^{-3}$. From left column to the right, snapshots at time $t=0$, $t=1$, $t=10$, $t=100$, $t=101$, $t=105$.}
    \label{fig:ETDRK2_3d_gamma}
\end{figure}

\begin{figure}[t]
    \begin{center}
    \includegraphics[width=0.18\textwidth] {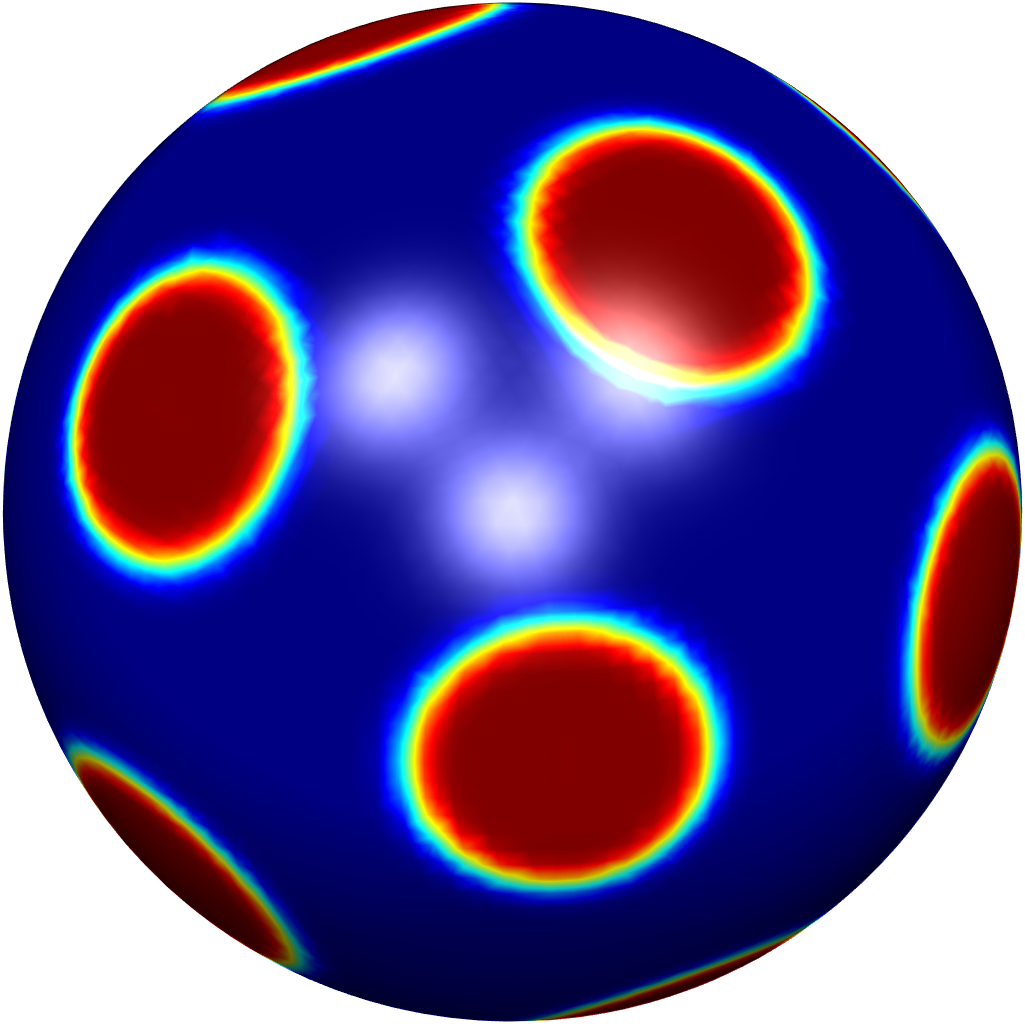} \
    \includegraphics[width=0.18\textwidth] {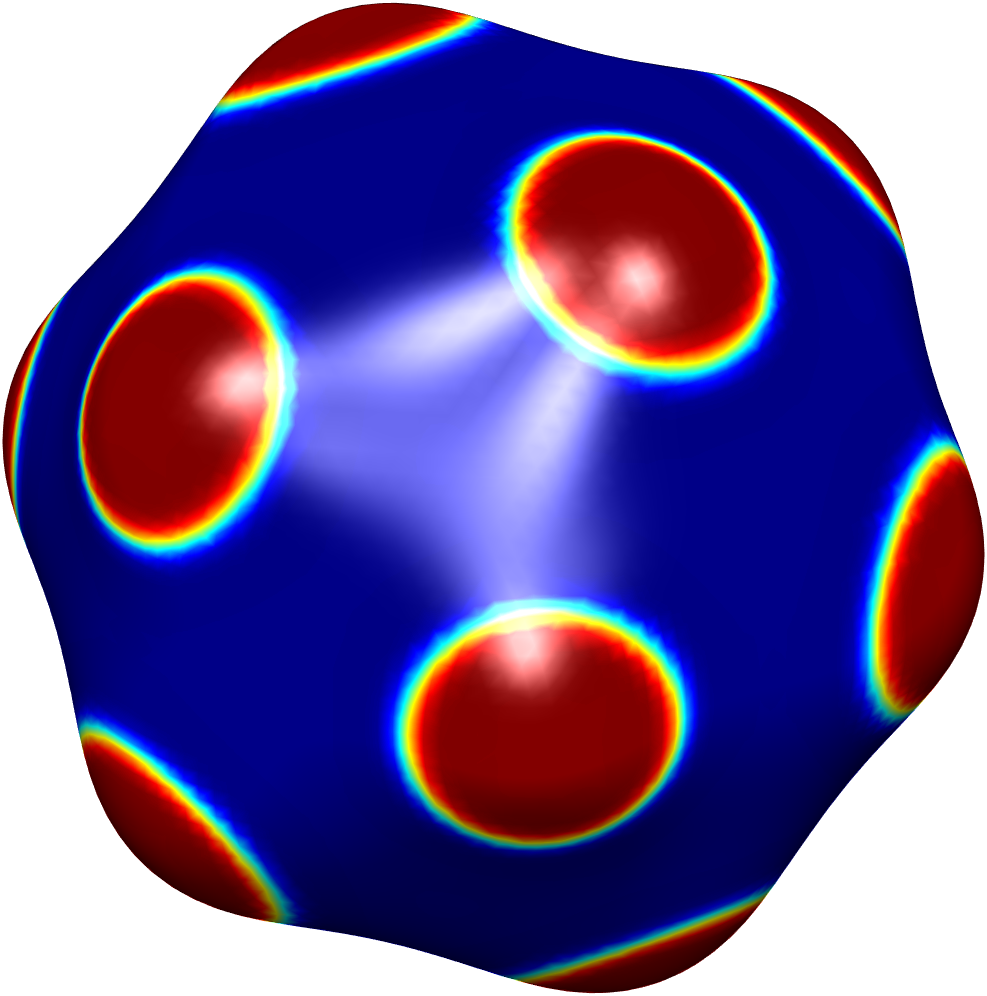} \
    \includegraphics[width=0.18\textwidth]{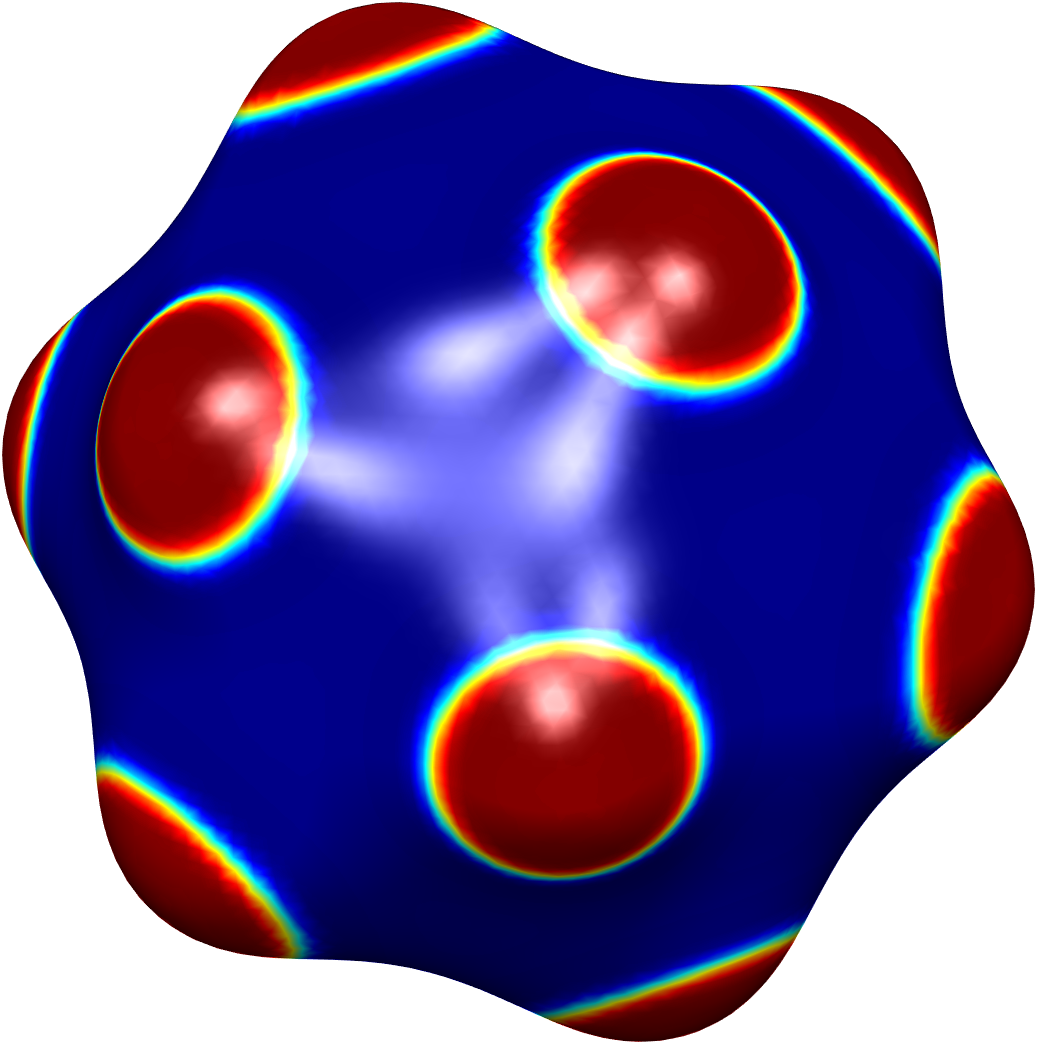} \
    \includegraphics[width=0.18\textwidth]{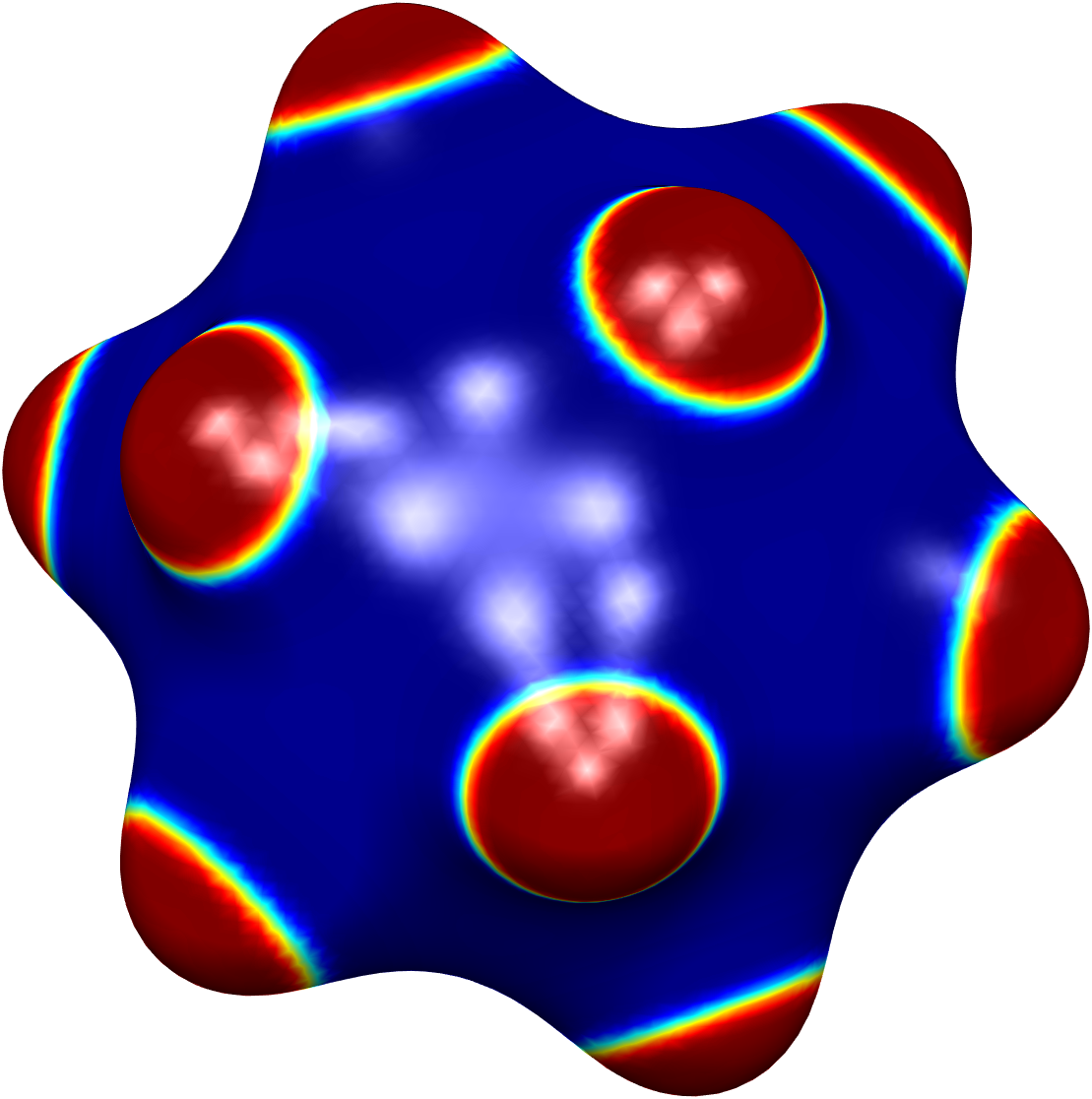} \
    \includegraphics[width=0.18\textwidth]{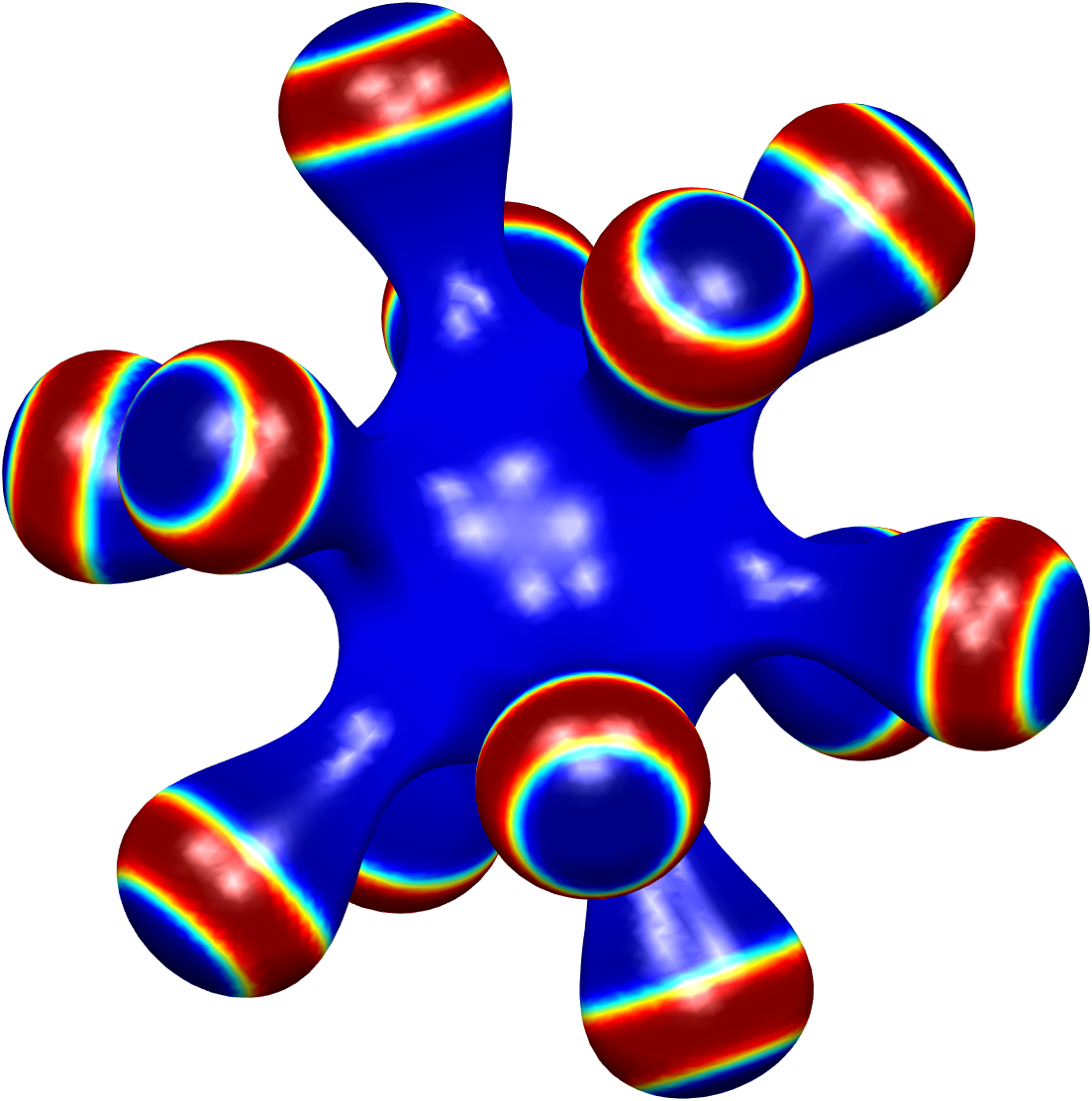}
    \end{center}
    \caption{3D coarsening dynamics process using Algorithm \ref{alg:ETD1_RK2}, parameters $\bar{u} = 0.3$, $\gamma = 4000$, and time step size $\tau = 1e-3$. From left column to right, snapshots for the initial and the biochemical strength $\alpha = 250, 300, 350, 500$. }
    \label{fig:ETDRK2_3d_12b}
\end{figure}

\section{Concluding Remarks}\label{Sec:Conclusion}

In this paper, we first reformulated the OK model on a fixed phase–field membrane and designed first– and second–order ETD schemes combined with a second–order finite difference discretization. We proposed a new operator splitting method based on the localization function, which produces symmetric linear operators and enables rigorous proofs of the discrete maximum bound principle and unconditional energy stability for both ETD1 and ETDRK2 schemes.  To further enhance computational efficiency, a narrow‑band technique was incorporated so that all surface–related quantities are evaluated only in a thin neighborhood of the membrane while preserving accuracy. We then extended the ETD framework to the coupled system of multicomponent membrane \eqref{eqn:model_force}-\eqref{eqn:model_OK} in an FFT-based setting. By using spectral collocation for the force–balance equation and the membrane–associated OK dynamics with suitable stabilizations in time, we obtained highly efficient ETD algorithms for long–time simulations in both two and three dimensions. The numerical experiments support the theoretical results for the fixed–membrane problem and demonstrate rich three–dimensional behaviors such as protein–driven microphase separation, membrane budding, and shape remodeling.
 
Several important questions remain for future investigation. A major direction is to couple the present phase–field formulation with Stokes or Stokes‑type hydrodynamics for the surrounding fluid, which would allow a more realistic treatment of viscous effects and membrane–fluid interactions but introduce significant analytical and numerical challenges. Another direction is to incorporate adhesion forces into the energy and force balance and to carry out one-dimensional mathematical analysis together with systematic two– and three-dimensional simulations using the schemes proposed in this work. These extensions would advance the mechanochemical modeling of multicomponent membranes and facilitate quantitative comparison with experiments on cell–substrate adhesion, vesicle transport, and related processes.


\bibliographystyle{elsarticle-num}
\bibliography{citation}
\end{document}